\newcommand{\dist}{\mathrm{dist}\,}          
\newcommand{\capE}{\mathcal{E}}
\newcommand{\capF}{\mathcal{F}}
\newcommand{\capH}{\mathcal{H}}
\newcommand{\capI}{\mathcal{I}}
\newcommand{\mathN}{\mathbb{N}}
\newcommand{\mathR}{\mathbb{R}}
\newcommand{\mathS}{\mathbb{S}}
\let\OLDthebibliography\thebibliography
\renewcommand\thebibliography[1]{
	\OLDthebibliography{#1}
	\setlength{\parskip}{1.5pt}
	\setlength{\itemsep}{1pt plus 0.3ex}
}
\theoremstyle{mystyle}
\newtheorem{theorem}{Theorem}[section]
\newtheorem*{theorem*}{Theorem}
\newtheorem{lemma}[theorem]{Lemma}
\newtheorem{proposition}[theorem]{Proposition}
\theoremstyle{definition}
\theoremstyle{remark}
\newtheorem{remark}[theorem]{Remark}
\begin{document}
\title{Existence of minimizers for a generalized liquid drop model with fractional perimeter}

\author{Matteo Novaga\footnote{Universit\`a di Pisa, Largo Bruno Pontecorvo 5, 56127 Pisa, Italy. E-mail: {\tt matteo.novaga@unipi.it}}
\and Fumihiko Onoue\footnote{Scuola Normale Superiore, Piazza Cavalieri 7, 56126 Pisa, Italy. E-mail: {\tt fumihiko.onoue@sns.it}}
}

\date{\today}	

\maketitle

\begin{abstract}
	We consider the minimization problem of the functional given by the sum of the fractional perimeter and a general Riesz potential, which is one generalization of Gamow's liquid drop model. We first show the existence of minimizers for any volumes if the kernel of the Riesz potential decays faster than that of the fractional perimeter. Secondly, we show the existence of generalized minimizers for any volumes if the kernel of the Riesz potential just vanishes at infinity. Finally, we study the asymptotic behavior of minimizers when the volume goes to infinity and we prove that a sequence of minimizers converges to the Euclidean ball up to translations if the kernel of the Riesz potential decays sufficiently fast.  
\end{abstract}

\tableofcontents

\section{Introduction}\label{sectionIntroduction}
We study existence and asymptotic behavior of minimizers for the minimization problem
\begin{equation}\label{minimizationGeneralizedFunctional}
	E_{s,g}[m] \coloneqq \inf\left\{\capE_{s,g}(E) \mid \text{$E \subset \mathR^N:$ measurable, $|E| = m$} \right\}
\end{equation}
for any $m>0$, where we define the functional $\capE_{s,g}$ as
\begin{equation}\label{generalizedFunctional}
	\capE_{s,g}(E) \coloneqq P_s(E) + \, V_g(E) 
\end{equation}
for any measurable set $E \subset \mathR^N$. Note that the first term $P_s$ of \eqref{generalizedFunctional} is the \textit{fractional $s$-perimeter} with $s \in (0,\,1)$ defined by
\begin{equation}\nonumber
	P_s(E) \coloneqq \int_{E}\int_{E^c}\frac{1}{|x-y|^{N+s}}\,dx\,dy
\end{equation}
for any measurable set $E \subset \mathR^N$, and the second term $V_g$ of \eqref{generalizedFunctional} is the generalized Riesz potential, defined by
\begin{equation}\nonumber
	V_g(E) \coloneqq \int_{E}\int_{E}g(x-y)\,dx\,dy
\end{equation}
for any measurable set $E \subset \mathR^N$, where $g: \mathR^N \setminus \{0\} \to \mathR$ is a non-negative, measurable, and radially symmetric function. The precise assumptions on $g$ will be given in Section \ref{sectionMainResults}.

Problem \eqref{minimizationGeneralizedFunctional} can be regarded as a nonlocal counterpart of the minimization problem
\begin{equation}\label{classicalLiquidDropModel}
	E_{g}[m] \coloneqq \inf\left\{ \capE_g(E) \coloneqq P(E) +  V_{g}(E) \mid \text{$E \subset \mathR^N$, $|E|=m$}\right\}
\end{equation}
where we let $P(E)$ be the classical perimeter of a set $E$.

A relevant physical case of Problem \eqref{classicalLiquidDropModel} is when $N=3$ and $g(x) = |x|^{-1}$ for $x\in\mathR^N\setminus\{0\}$. In this case, this problem is referred as Gamow's liquid drop model and was firstly investigated by George Gamow in \cite{Gamow} to reveal some basic properties of atoms and provide a simple model of the nuclear fission. In this model, an atomic nucleus can be regarded as nucleons (protons and neutrons) contained in a set $E \subset \mathR^N$. The nucleons are assumed to be concentrated with constant density and implies the number of nucleons is proportional to $|E|$. From a physical point of view, the classical perimeter term corresponds to surface tension, which is minimised by spherical nuclei. On the other hand, the Riesz potential corresponds to a Coulomb repulsion, which tends to drive nuclei away from each other. Due to these properties, the competition between the perimeter term and Riesz potential occurs. By rescaling, one can easily observe this phenomenon. Indeed, using the dilation $\lambda \mapsto \lambda \,E$ for a set $E$, we have that
\begin{equation}\nonumber
	P(\lambda E) + V_{g}(\lambda E) = \lambda^{N-1}\,P(E) +  \int_{E}\int_{E} \lambda^{2N}\,g(\lambda(x-y))\,dx\,dy 
\end{equation}
for any $\lambda>1$ and measurable set $E \subset \mathR^N$. Then, if the kernel $g$ satisfies $g(x) \thickapprox |x|^{-(N+\delta)}$ as $|x| \to \infty$ for $\delta < 1$, we have that $\lambda^{2N}\,g(\lambda\,x) \thickapprox \lambda^{N-\delta}$. Thus, the Riesz potential dominates the perimeter as $\lambda \to \infty$ since $\delta < 1$. On the other hand, if $\lambda \to 0$, then the perimeter dominates the Riesz potential. In Problem \eqref{minimizationGeneralizedFunctional}, the nonlocal perimeter $P_s$ with $s \in (0,\,1)$ behaves like the classical perimeter when $s$ approaches to 1 (see the asymptotic behavior of the $s$-fractional perimeter and more general results in \cite{ADPM, BBM, CaVa, Davila, LeSp01, LeSp02, Ponce}). The authors in \cite{CMT} published a survey on this model and the historical background and some references are therein.

Now let us briefly review the previous works on the classical liquid drop model. Recently, the authors in \cite{FrNa} revisited this model and some references are also therein. The main interest from the mathematical point of view is to investigate the following three topics: the existence of minimizer, the non-existence of minimizer, and the minimality of the ball. Kn\"upfer and Muratov in \cite{KnMu01, KnMu02} considered when $g$ is equal to $|x|^{-\alpha}$ for $\alpha \in (0,\,N)$ with $N \geq 2$ and proved that there exists constants $0< m_0 \leq m_1 \leq m_2 <\infty$ such that the following three things hold: if $N \geq 2$, $\alpha \in (0,\,N)$, and $m \leq m_1$ , then Problem \eqref{classicalLiquidDropModel} admits a minimizer; if $N \geq 2$, $\alpha \in (0,\,2)$, and $m > m_2$, then Problem \eqref{classicalLiquidDropModel} does not admit a minimizer; finally, if $m \leq m_0$, then the ball is the unique minimizer whenever either $N = 2$ and $\alpha \in (0,\,2)$, or $3 \leq N \leq 7$ and $\alpha \in (0,\,N-1)$. Later, Julin in \cite{Julin} proved that, if $N \geq 3$ and $g(x) = |x|^{-(N-2)}$, the ball is the unique minimizer of $\capE_g$ whenever $m$ is sufficiently small. Bonacini and Cristoferi in \cite{BoCr} studied the case of the full parameter range $N \geq 2$ and $\alpha \in (0,\,N-1)$ when $g(x) = |x|^{-\alpha}$. Moreover, for a small parameter $\alpha$, the authors in \cite{BoCr} gave a complete characterization of the ground state. Namely, they showed that, if $\alpha$ is sufficiently small, there exists a constant $m_c$ such that the ball is the unique minimizer of $\capE_g$ for $m \leq m_c$ and $\capE_g$ does not have minimizers for $m > m_c$. In a slightly different context, Lu and Otto in \cite{LuOt} showed the non-existence of minimizers for large volumes and that the ball is the unique minimizer for small volumes when $N = 3$ and $g(x) = |x|^{-1}$. The authors were motivated by the ionization conjecture and the energy that the authors studied includes background potential, which behaves like an attractive term. In the similar context to \cite{LuOt}, the authors in \cite{FNB} showed the non-existence of minimizers for large volumes. In contrast, the authors in \cite{ABCT} proved that a variant of Gamow's model including the background potential admits minimizers for any volume, due to the effects from the background potential against the Riesz potential. Very recently, Novaga and Pratelli in \cite{NoPr} showed the existence of generalized minimizers for the energy associated with $\capE_g$ for any volume. After this work, Carazzao, Fusco, and Pratelli in \cite{CFP} showed that the ball is the unique minimizer for small volumes in any dimensions and for a general function $g$. Concerning the behavior of (generalized) minimizers for large volumes, Pegon in \cite{Pegon} showed that, if the kernel $g$ decays sufficiently fast at infinity and if the volume is sufficiently large, then minimizers exist and converge to a ball, up to rescaling, when the volume goes to infinity. Shortly after, Merlet and Pegon in \cite{MePe} proved that, in dimension $N=2$, minimizers are actually balls for large enough volumes. 

One remarkable feature of our results is that some nonlocal effect from the fractional perimeter of $\capE_{s,g}$ enables us to obtain minimizers of Problem \eqref{minimizationGeneralizedFunctional} for any volumes. As we mentioned above, it is known that Problem \eqref{classicalLiquidDropModel} admits the ball as the unique minimizer for sufficiently small volumes if the kernel $g$ is a Riesz kernel. Moreover, the author in \cite{Rigot} proved the existence of minimizers of $\capE_g$ for any volumes if the kernel $g$ has a compact support. Even if the kernel $g$ does not have a compact support but, if $g$ decays sufficiently fast, the author in \cite{Pegon} recently showed the existence of minimizers of $\capE_g$ for sufficiently large volumes. On the other hand, in our problem, we reveal that, if the kernel $g$ does not have a compact support but decays sufficiently fast, then minimizers of $\capE_{s,g}$ exist for any volumes. Hence, unlike the cases studied in \cite{Rigot} and \cite{Pegon}, a sort of nonlocal contribution of the fractional perimeter can ensure the existence of minimizers for any volumes.

By a heuristic argument, one can observe that, if $g$ decays sufficiently fast, the fractional perimeter dominates the Riesz potential even if the volume is sufficiently large. Indeed, if $g(x) \lesssim |x|^{-(N+s')}$ and $s' > s$, then we obtain that
\begin{align}
	\capE_{s,g}(\lambda E) &= \lambda^{N-s}\,P_s(E) + \lambda^{2N}\,\int_{E}\int_{E}g(\lambda(x-y))\,dx\,dy \nonumber\\
	&= \lambda^{N-s} \left(P_s(E) + \lambda^{s-s'}\,\int_{E}\int_{E}\lambda^{N+s'}g(\lambda(x-y))\,dx\,dy \right)\nonumber
\end{align}
for any set $E \subset \mathR^N$ and $\lambda >0$. Since we assume that $s'>s$, the Riesz potential could be dominated by the nonlocal perimeter term as $\lambda \to \infty$. Thus, one natural question is what would be the behavior of the energy like in the case that the kernel $g$ behaves like the kernel $|x|^{-(N+s)}$ of the fractional perimeter $P_s$.

In this paper, we answer this question. More precisely, we obtain the existence of minimizers for any volume and we characterize the asymptotic behavior of minimizers as the volume goes to infinity, assuming that the kernel $g$ decays faster than the kernel of the fractional perimeter $P_s$. More precisely, we first prove the existence of minimizers of $\capE_{s,g}$ for any volume. To see this, we assume that $g$ is symmetric with respect to the origin, radially non-increasing, and decays faster than the kernel of the fractional perimeter $P_s$. For the details, we refer to Section \ref{sectionPreliminary}. The strategy of the proof is inspired by the concentration-compactness lemma by Lions \cite{Lions01, Lions02} and has been adapted by many authors (see, for instance, \cite{GoNo, dCNRV, CeNo} for topics closely related to ours). We will give some intuitive explanation of the strategy before proving the claim in Section \ref{sectionExisMiniFastDecayGAnyVol}.

Secondly, we prove the existence of generalized minimizers of a generalized functional $\widetilde{\capE}_{s,g}$, which we will define later, under the assumption that the kernel $g$ vanishes at infinity. It is easy to see that this assumption is weaker than the assumption that $g$ decays faster than the kernel of the nonlocal perimeter, which we imposed to prove the first result. For convenience, we here give the definitions of the generalized functional and generalized minimizers. For any $m>0$, we define a \textit{generalized functional} of $\capE_{s,g}$ over the family of sequences of the sets $\{E^k\}_{k\in\mathN}$ with $\sum_{k=1}^{\infty}|E^k| = m$ as 
\begin{equation}\label{defiGeneralziedFunctional}
	\widetilde{\capE}_{s,g}\left(\{E^k\}_{k\in\mathN}\right) \coloneqq \sum_{k=1}^{\infty}\capE_{s,g}(E^k).
\end{equation}
Then we consider the minimization problem
\begin{equation}\label{minimizationGeneralizedMinimizerFunctional}
	\inf\left\{\widetilde{\capE}_{s,g}\left(\{E^k\}_{k\in\mathN}\right) \mid \text{$E^k$: measurable for any $k$, $\sum_{k}|E^k| = m$} \right\}
\end{equation}
and show the existence of a minimizer of Problem \eqref{minimizationGeneralizedMinimizerFunctional} for any $m>0$. We call such a minimizer a \textit{generalized minimizer} of $\capE_{s,g}$. The precise statement will be given Theorem \ref{theoremExistGeneralizedMiniAnyVolume} in Section \ref{sectionMainResults}. The idea to prove our second result is to show the identity 
\begin{equation}\nonumber
	\inf\left\{\capE_{s,g}(E) \mid |E|=m \right\} = \inf\left\{\widetilde{\capE}_{s,g}(\{E^k\}_k) \mid \sum_{k=1}^{\infty} |E^k| = m \right\}
\end{equation}
for any $m>0$ and apply the same method which we use in the proof of our first result. 

Finally, we investigate the asymptotic behavior of minimizers as the volume goes to infinity, under the assumption that $g$ decays faster at infinity than the kernel $|x|^{-(N+s)}$ of the fractional perimeter $P_s$. Here we require an assumption on $g$ which is stronger than the one we assume in the existence result. To study the asymptotic behavior, we consider an equivalent minimization problem. More precisely, one can have two problems equivalent to $E_{s,g}[m]$ for $m>0$ under a proper decay assumption on $g$. Indeed, since the kernel $g$ is integrable over $\mathR^N$ under some proper assumptions, one can rewrite the Riesz potential as
\begin{equation}\nonumber
	\int_{E}\int_{E} g(x-y)\,dx\,dy = |E|\,\|g\|_{L^1(\mathR^N)} - \int_{E}\int_{E^c} g(x-y)\,dx\,dy
\end{equation}
for any measurable set $E \subset \mathR^N$ with $|E| < \infty$. Hence, the minimization problem \eqref{minimizationGeneralizedFunctional} becomes 
\begin{equation}\label{minimizationModifiedProblem}
	\widehat{E}_{s,g}[m] \coloneqq \inf\left\{ P_s(E) - \int_{E}\int_{E^c} g(x-y)\,dx\,dy \mid |E|=m \right\}
\end{equation}
for any $m>0$. Moreover, by rescaling, one can further modify the minimization problem \eqref{minimizationModifiedProblem} into the equivalent problem
\begin{equation}\label{minimizationScalingModifiedProbelm}
	\widehat{E}^{\lambda}_{s,g}(B_1) \coloneqq \inf\left\{ \widehat{\capE}^{\lambda}_{s,g}(F) \coloneqq P_s(F) - \int_{E}\int_{E^c} \lambda^{N+s}g(\lambda(x-y))\,dx\,dy \mid |F|=|B_1| \right\}
\end{equation}
for any $\lambda>0$. Note that we will revisit more precisely the notations \eqref{minimizationModifiedProblem} and \eqref{minimizationScalingModifiedProbelm} in Section \ref{sectionMainResults}. With this notation, our last theorem is as follows; suppose that $\{F_n\}_{n}$ is any sequence of the minimizers of $\widehat{\capE}^{\lambda_n}_{s,g}$ such that $\lambda_n \to \infty$  and $|F_n|=|B_1|$ for any $n$. Then we have that the full sequence satisfies
\begin{equation}\nonumber
	|F_n \Delta B_1| \xrightarrow[n \to \infty]{} 0
\end{equation}
up to translations.

The organization of this paper is as follows: in Section \ref{sectionMainResults}, we will state our main results, namely, the existence of minimizers, the existence of generalized minimizers, and the convergence of any sequence of rescaled minimizers to the ball. In Section \ref{sectionPreliminary}, we will give several preliminary properties of minimizers of our energy. In Section \ref{sectionExisMiniFastDecayGAnyVol}, we will prove the existence of minimizers for any volumes and, in Section \ref{sectionExistGeneMiniAnyVol}, we will prove the existence of generalized minimizers for any volumes. In Section \ref{sectionAsymptoticMiniLargeVol}, we will study the asymptotic behavior of rescaled minimizers as the volume goes to infinity. We will also give the $\Gamma$-convergence result for our energy.

\begin{center}
 \textbf{{\small Acknowledgments}}
\end{center}

The authors would like to thank Marc Pegon for fruitful discussions on this work and several comments on the first draft of our manuscript.

The authors were supported by the INDAM-GNAMPA and by the PRIN Project 2019/24 {\it Variational methods for stationary and evolution problems with singularities and interfaces}.

\section{Main reuslts}\label{sectionMainResults}
We start with the assumptions on the kernel $g$  of the Riesz potential in the energy $\capE_{s,g}$. Throughout this paper, we assume that $s \in (0,\,1)$ and $g:\mathR^N \setminus \{0\} \to \mathR$ is in $L^1_{loc}(\mathR^N)$ and not identically equal to zero. We consider the following conditions on $g$:
\begin{itemize}
	\item[(g1)] $g$ is non-negative and radially non-increasing, namely,
	\begin{equation}\nonumber
		g(\lambda\,x) \leq g(x) \quad \text{for $x \in \mathR^N \setminus \{0\}$ and $\lambda \geq 1$}.
	\end{equation}
	\item[(g2)] $g$ is symmetric with respect to the origin, namely, $g(-x) = g(x)$ for any $x\in\mathR^N \setminus \{0\}$.
\end{itemize}
When we prove the existence of minimizers of $\capE_{s,g}$ in Section \ref{sectionExisMiniFastDecayGAnyVol}, we further assume the following condition on $g$:
\begin{itemize}
	\item[(g3)] There exist constants $R_0>1$ and $\beta \in (0,\,1)$ such that
	\begin{equation}\nonumber
		g(x) \leq \frac{\beta}{|x|^{N+s}} \quad \text{for any $|x| \geq R_0$}.
	\end{equation}
	($g$ decays faster than the kernel of $P_s$ far away from the origin.)
\end{itemize}
On the other hand, when we prove the existence of generalized minimizers of $\widetilde{\capE}_{s,g}$ in Section \ref{sectionExistGeneMiniAnyVol}, we assume the following condition, weaker than $(\mathrm{g}3)$:
\begin{itemize}
	\item[(g4)] $g$ vanishes at infinity, namely, $g(x) \to 0$ as $|x| \to \infty$.  
\end{itemize}
Moreover, when we study the asymptotic behavior of rescaled minimizers with large volumes in Section \ref{sectionAsymptoticMiniLargeVol}, we further impose the following assumption on $g$:
\begin{itemize}
	\item[(g5)] There exists a constant $\gamma \in (0,\,1)$ such that
	\begin{equation}\nonumber
		g(x) \leq \frac{\gamma}{|x|^{N+s}} \quad 
		\text{for any $x \in \mathR^N \setminus \{0\}$}, \quad g(x) = o\left(\frac{1}{|x|^{N+s}}\right) \quad \text{as $|x| \to \infty$}.
	\end{equation}
\end{itemize}

\begin{remark}
	From the assumption that $g \in L^1_{loc}(\mathR^N)$, we can easily show that $V_g(B) < +\infty$ for any ball $B \subset\ \mathR^N$. Indeed, one may compute
	\begin{equation}\nonumber
		V_g(B) \leq \int_{B}\int_{2B(y)} g(x-y)\,dx\,dy = |B|\int_{2B(0)}g(x)\,dx < \infty.
	\end{equation}
	Moreover, if we assume $(\mathrm{g}3)$, we actually have that $g$ is integrable in $\mathR^N$. Indeed, since $g \in L^1_{loc}(\mathR^N)$, we have that $\|g\|_{L^1(B_{R_0})} < \infty$. On the other hand, from $(\mathrm{g}3)$ and the integrability of $|x|^{-(N+s)}$ in $B^c_{R_0}$, we also have that $\|g\|_{L^1(B^c_{R_0})} < \infty$. Hence, the claim holds true.
\end{remark}

\begin{remark}
	A condition ensuring assumption $(\mathrm{g}5)$ is the existence of constants $R_0>1$, $\gamma \in (0,\,1)$, and $t > s$ such that
	\begin{align}\label{assumptionGDecayInfinity}
		g(x) \leq  
		\begin{cases}
			\displaystyle 
			\frac{\gamma}{|x|^{N+s}} & \quad \text{if $0< |x| < R_0$}\\
			\displaystyle
			\frac{1}{|x|^{N+t}} & \quad \text{if $|x| \geq R_0$}.
		\end{cases}
	\end{align}
	Notice that this assumption is stronger than assumption $(\mathrm{g}3)$. To show that \eqref{assumptionGDecayInfinity} implies $(\mathrm{g}5)$, we first take any $\varepsilon>0$ and, without loss of generality, assume that $\varepsilon < R_0^{-(t-s)}$ where $R_0$ is as in \eqref{assumptionGDecayInfinity}. Then it holds that 
	\begin{equation}\label{assumptionGDecayInfinity02}
		\frac{1}{|x|^{N+t}} \leq \frac{\varepsilon}{|x|^{N+s}} \quad \text{for any $|x| \geq \varepsilon^{-\frac{1}{t-s}}$}
	\end{equation}
	and thus, from \eqref{assumptionGDecayInfinity} and \eqref{assumptionGDecayInfinity02}, we obtain that
	\begin{equation}\nonumber
		g(x) \leq \frac{\varepsilon}{|x|^{N+s}} 
	\end{equation}
	for any $|x| \geq \varepsilon^{-\frac{1}{t-s}}$. Note that we have used the assumption $t > s$. From \eqref{assumptionGDecayInfinity}, we can easily show that $g(x) \leq \gamma|x|^{-(N+s)}$ for any $x \neq 0$. Hence, this completes the proof of the claim.
\end{remark}

\begin{remark}
	In the case of Problem \eqref{minimizationScalingModifiedProbelm} for large volumes, the author in \cite{Pegon} assumed that the kernel $g$ satisfies
	\begin{equation}\label{assumptionKernelG}
		g \in L^1(\mathR^N), \quad \int_{\mathR^N}|x| \,g(x)\,dx < + \infty.
	\end{equation}
	This condition with the radial symmetry of $g$ implies that $g$ satisfies 
	\begin{equation}
		g(x) \leq \frac{1}{|x|^{N+s}} \quad \text{for $|x|<1$}, \quad g(x) \leq \frac{c(g)}{|x|^{N+1}} \quad 
		\text{for $|x|>1$}
	\end{equation}
	where $c(g)>0$ is some constant. One may find the proof of this implication, for instance, in \cite{Carazzato}. Hence, it is easy to see that the assumption \eqref{assumptionKernelG} implies our assumption $(\mathrm{g}5)$. 
\end{remark}

Now we can state the main results of this paper. In the first result, we show the existence of minimizers of $\capE_{s,g}$ for any volume under the assumption that $g$ decays faster than the kernel of $P_s$ at infinity.
\begin{theorem}\label{theoremExistMiniAnyVolumeFasterDecay}
	Assume that the kernel $g: \mathR^N \setminus \{0\} \to \mathR$ satisfies the assumptions $(\mathrm{g}1)$, $(\mathrm{g}2)$, and $(\mathrm{g}3)$. Then, there exists a minimizer of $\capE_{s,g}$ with the volume $m$ for any $m>0$. 
	
	Moreover, the boundary of every minimizer has the regularity of class $C^{1,\alpha}$ with $\alpha \in (0,\,1)$ except a closed set of Hausdorff dimension $N-3$. 
\end{theorem}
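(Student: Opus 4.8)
The plan is to run the direct method on a minimizing sequence for \eqref{minimizationGeneralizedFunctional}, using the concentration--compactness principle of Lions to upgrade weak $L^1_{loc}$ compactness to strong $L^1$ compactness up to translations. Fix $m>0$ and pick measurable sets $E_n$ with $|E_n|=m$ and $\capE_{s,g}(E_n)\to E_{s,g}[m]$. Comparing with a ball $B$ of volume $m$ gives $E_{s,g}[m]\le\capE_{s,g}(B)<\infty$ (here $P_s(B)<\infty$, and $V_g(B)<\infty$ since $g\in L^1_{loc}$), so, $V_g$ being nonnegative, $\sup_nP_s(E_n)=:C_0<\infty$; also $P_s(E_n)\ge P_s(B)$ by the fractional isoperimetric inequality. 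I would then apply the concentration--compactness dichotomy to the functions $\mathbf 1_{E_n}$, all of total mass $m$: along a subsequence, either (i) \emph{compactness} (after translations almost all the mass stays in a fixed bounded set), (ii) \emph{vanishing} ($\sup_x|E_n\cap B_R(x)|\to0$ for every $R>0$), or (iii) \emph{dichotomy} (the mass splits into two parts whose mutual distance diverges); the point is to exclude (ii) and (iii).

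To exclude vanishing I would use the lower bound
\[
P_s(E)\ \ge\ c(N,s)\,|E|\,\Big(\sup_{x\in\mathR^N}|E\cap B_1(x)|\Big)^{-s/N},
\]
obtained by partitioning $\mathR^N$ into unit cubes $Q$ of bounded overlap, applying on each the relative fractional isoperimetric inequality $\int_{E\cap Q}\int_{Q\setminus E}|x-y|^{-N-s}\,dx\,dy\ge c\,|E\cap Q|^{(N-s)/N}$ (valid once the supremum is small, since then $|E\cap Q|\le\tfrac12|Q|$) and summing in $Q$; since $P_s(E_n)\le C_0$ while the right-hand side diverges under (ii), vanishing cannot occur. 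Hence there are $\delta_0>0$ and translations with $|E_n\cap B_1(0)|\ge\delta_0$; along a subsequence $\mathbf 1_{E_n}\to\mathbf 1_{E_\infty}$ in $L^1_{loc}$ and a.e., with $|E_\infty|=m'\in[\delta_0,m]$, and since, by Fatou on the double integrals defining $P_s$ and $V_g$, $\capE_{s,g}$ is lower semicontinuous along this convergence, we get $\capE_{s,g}(E_\infty)\le E_{s,g}[m]$. If $m'=m$, then $\mathbf 1_{E_n}\to\mathbf 1_{E_\infty}$ in $L^1(\mathR^N)$ and $E_\infty$ is the desired minimizer.

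The crucial step is to rule out $m'<m$, i.e.\ dichotomy, and this is where assumption $(\mathrm g3)$ is used. Splitting $\{E_n\}$ into a tight part converging to $E_\infty$ and a part escaping to infinity, dropping the nonnegative $g$-cross term and using that the $|x-y|^{-N-s}$-cross term vanishes as the distance diverges, one obtains $E_{s,g}[m]\ge\capE_{s,g}(E_\infty)+E_{s,g}[m-m']\ge E_{s,g}[m']+E_{s,g}[m-m']$ (the second summand via a routine analysis of the escaping mass, using $g(x)\to0$, which follows from $(\mathrm g3)$), while placing near-minimizers of volumes $m'$ and $m-m'$ at diverging distance gives the reverse inequality; hence equality. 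To contradict this I would prove \emph{strict} subadditivity: if $A,B$ are bounded with $\dist(A,B)\ge R_0$, then $g(x-y)\le\beta|x-y|^{-N-s}$ with $\beta<1$ for all $x\in A$, $y\in B$, so
\[
\capE_{s,g}(A\cup B)=\capE_{s,g}(A)+\capE_{s,g}(B)+2\int_A\int_B\big(g(x-y)-|x-y|^{-N-s}\big)\,dx\,dy\ \le\ \capE_{s,g}(A)+\capE_{s,g}(B)-2(1-\beta)\int_A\int_B\frac{dx\,dy}{|x-y|^{N+s}},
\]
and the last integral is at least $(1-\beta)\,|A|\,|B|\,(R_0+\diam A+\diam B)^{-N-s}>0$. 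Taking $A,B$ to be \emph{uniformly bounded} near-minimizers of volumes $m'$ and $m-m'$ (which exist by a standard truncation argument) placed at distance exactly $R_0$ gives $E_{s,g}[m]<E_{s,g}[m']+E_{s,g}[m-m']$, a contradiction; hence $m'=m$. I expect this to be the main obstacle: turning the heuristic ``$\beta<1$ makes the fractional perimeter dominate'' into a genuine strict subadditivity of $t\mapsto E_{s,g}[t]$, and in particular choosing the competitors so that the gain $(1-\beta)|A||B|(R_0+\diam A+\diam B)^{-N-s}$ does not degenerate --- which forces one to control their diameters.

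For the regularity statement, let $E$ be any minimizer. Since $g\in L^1_{loc}(\mathR^N)$ and, by $(\mathrm g1)$, $g$ is bounded outside every neighbourhood of the origin, the potential $v_g(x):=\int_Eg(x-y)\,dy$ is bounded, so for any competitor $F$ with $E\Delta F$ contained in a small ball one has $|V_g(F)-V_g(E)|\le2\|v_g\|_{L^\infty(\mathR^N)}|E\Delta F|$, of lower order with respect to $P_s$. Using the usual volume-fixing variations to reduce to competitors with $|F|=|E|$ at comparable cost, minimality of $\capE_{s,g}$ forces $E$ to be a $\Lambda$-minimizer of the fractional $s$-perimeter, $P_s(E)\le P_s(F)+\Lambda|E\Delta F|$ with $\Lambda=2\|v_g\|_{L^\infty(\mathR^N)}$, for $E\Delta F$ in small balls. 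The regularity theory for $\Lambda$-minimizers of $P_s$ --- the $\varepsilon$-regularity theorem of Caffarelli--Roquejoffre--Savin, together with the classification of planar nonlocal minimal cones by Savin--Valdinoci and a dimension-reduction argument --- then yields that $\partial E$ is of class $C^{1,\alpha}$ for some $\alpha\in(0,1)$ outside a closed set of Hausdorff dimension at most $N-3$, as claimed.
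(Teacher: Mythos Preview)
Your overall strategy---concentration--compactness plus a strict subadditivity coming from $(\mathrm g3)$---matches the paper's, and your key computation with the cross term $-2(1-\beta)\int_A\int_B|x-y|^{-N-s}$ is exactly the mechanism the paper uses. The regularity paragraph is also essentially the paper's argument (they route through an unconstrained penalized problem, Proposition~\ref{propositionEquivalenceProblem}, to obtain a uniform $\Lambda$, but the outcome is the same $\Lambda$-minimality for $P_s$).

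The gap you yourself flag is real and is precisely what separates your sketch from a proof. To obtain the \emph{strict} inequality $E_{s,g}[m]<E_{s,g}[m']+E_{s,g}[m-m']$ you need bounded near-minimizers $A,B$ whose diameters do not blow up as $\capE_{s,g}(A)\downarrow E_{s,g}[m']$, $\capE_{s,g}(B)\downarrow E_{s,g}[m-m']$; otherwise your gain $(1-\beta)|A|\,|B|\,(R_0+\diam A+\diam B)^{-N-s}$ may vanish faster than the approximation error. A plain truncation does not furnish this control, and without it the contradiction does not close. (A secondary looseness: in the dichotomy step, the escaping mass need not form a single tight piece, so ``$\ge E_{s,g}[m-m']$'' for it is not immediate---one has to iterate or account for further splitting.)

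The paper sidesteps both issues by reorganizing the order of the argument. Rather than proving abstract strict subadditivity first, it runs a cube-decomposition version of concentration--compactness (after \cite{dCNRV}) which directly yields a countable family $\{G^i\}$ with $\sum_i|G^i|=m$ and $\sum_i\capE_{s,g}(G^i)\le E_{s,g}[m]$. \emph{Weak} subadditivity (Lemma~\ref{lemmaSubadditivityEnergy}, needing only $g\to0$ at infinity) then forces each $G^i$ to be an actual minimizer at its own volume, hence bounded by Lemma~\ref{lemmaBoundednessMinimizers}. Only now, with genuinely bounded pieces in hand (and after reducing to finitely many), is $(\mathrm g3)$ invoked: placing the $G^i$ at mutual distance $\ge R_0$ produces a competitor of volume $m$ with energy at most $\sum_i\capE_{s,g}(G^i)$ minus a \emph{fixed} positive quantity, forcing all but one $|G^i|$ to vanish. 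In short, the diameter control you are missing comes for free once the pieces are known to be minimizers rather than near-minimizers; the paper arranges the argument so that this is available before $(\mathrm g3)$ is used.
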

The proof is inspired by so-called the ``concentration-compactness" lemma by Lions in \cite{Lions01, Lions02} and we apply the same idea shown in \cite{dCNRV}. We will roughly explain the idea of the proof in Section \ref{sectionExisMiniFastDecayGAnyVol}.

In the second theorem, we show the existence of generalized minimizer of $\widetilde{\capE}_{s,g}$ for any volume, under the assumption that $g$ vanishes at infinity. Notice that this assumption is weaker than the one we impose in Theorem \ref{theoremExistMiniAnyVolumeFasterDecay}.
\begin{theorem}\label{theoremExistGeneralizedMiniAnyVolume}
	Assume that the kernel $g: \mathR^N \setminus \{0\} \to \mathR$ satisfies the assumptions $(\mathrm{g}1)$, $(\mathrm{g}2)$, and $(\mathrm{g}4)$. Then, there exists a generalized minimizer of $\widetilde{\capE}_{s,g}$ for any $m>0$, namely, there exist a number $M \in \mathN$ and a sequence of sets $\{E^k\}_{k\in\mathN}$ such that
	\begin{equation}\nonumber
		\sum_{k=1}^{M} \capE_{s,g}(E^k) = \inf\left\{\widetilde{\capE}_{s,g}(\{E^k\}_k) \mid \sum_{k=1}^{M}|E^k| = m \right\},
	\end{equation}
	and $E^k$ is also a minimizer of $\capE_{s,g}$ among sets of volume $|E^k|$ for every $k\in\mathN$.
\end{theorem}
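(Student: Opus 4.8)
The plan is to deduce the statement from two facts: the identity
\[
	E_{s,g}[m] \;=\; \inf\Big\{\,\widetilde{\capE}_{s,g}(\{E^{k}\}_{k}) \ \Big|\ \textstyle\sum_{k}|E^{k}|=m\,\Big\}\;\eqqcolon\;\widetilde{E}_{s,g}[m],\qquad m>0,
\]
and a concentration--compactness analysis of a minimizing sequence for $E_{s,g}[m]$ carried out exactly as in the proof of Theorem~\ref{theoremExistMiniAnyVolumeFasterDecay}, but now keeping the dichotomy alternative on the table. The identity is elementary. The inequality ``$\le$'' follows by testing with $E^{1}=E$ and $E^{k}=\emptyset$ for $k\ge2$ (recall $\capE_{s,g}(\emptyset)=0$). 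For ``$\ge$'' one first checks that $m\mapsto E_{s,g}[m]$ is subadditive: given near-minimizers of $E_{s,g}[m_{1}]$ and $E_{s,g}[m_{2}]$, which by the boundedness/truncation properties recorded in Section~\ref{sectionPreliminary} may be taken bounded, translate one far from the other and take the union; in $P_{s}$ the cross term is \emph{subtracted}, while in $V_{g}$ the cross term tends to $0$ as the translation distance diverges since $g$ vanishes at infinity by $(\mathrm{g}4)$, so $E_{s,g}[m_{1}+m_{2}]\le E_{s,g}[m_{1}]+E_{s,g}[m_{2}]$. Moreover $E_{s,g}[\cdot]$ is continuous, because $|E_{s,g}[m]-E_{s,g}[m']|\le E_{s,g}[|m-m'|]\le \capE_{s,g}(B_{|m-m'|})\to0$. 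Iterating subadditivity, any sequence with $\sum_{k}|E^{k}|=m$ satisfies $\sum_{k}\capE_{s,g}(E^{k})\ge\sum_{k}E_{s,g}[|E^{k}|]\ge E_{s,g}[m]$, which is ``$\ge$''.

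Next I would run the concentration--compactness scheme on a sequence $\{E_{n}\}$ with $|E_{n}|=m$ and $\capE_{s,g}(E_{n})\to E_{s,g}[m]$, reproducing the trichotomy from the proof of Theorem~\ref{theoremExistMiniAnyVolumeFasterDecay}. \emph{Vanishing} is excluded exactly as there: if $\sup_{y}|E_{n}\cap B_{R}(y)|\to0$ for every $R$, then $E_{n}$ breaks into arbitrarily small chunks and the fractional isoperimetric inequality (with concave exponent $(N-s)/N<1$) forces $P_{s}(E_{n})\to\infty$, contradicting $\capE_{s,g}\ge P_{s}$ and the boundedness of the energy. In the \emph{compactness} case $E_{n}$ converges, up to translations and without loss of mass, to a set $E$ with $|E|=m$; lower semicontinuity of $P_{s}$ and $V_{g}$ together with the identity show $E$ is a minimizer, i.e.\ $M=1$. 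In the \emph{dichotomy} case the mass splits, $|E_{n}^{1}|\to m_{1}\in(0,m)$, $|E_{n}^{2}|\to m-m_{1}$, with $\dist(E_{n}^{1},E_{n}^{2})\to\infty$ and
\[
	\capE_{s,g}(E_{n})\;\ge\;\capE_{s,g}(E_{n}^{1})+\capE_{s,g}(E_{n}^{2})-o(1),
\]
the $P_{s}$ cross term being favorable and the $V_{g}$ cross term infinitesimal by $(\mathrm{g}4)$; combined with subadditivity this yields the binding identity $E_{s,g}[m]=E_{s,g}[m_{1}]+E_{s,g}[m-m_{1}]$ and makes $\{E_{n}^{1}\}$, $\{E_{n}^{2}\}$ minimizing sequences for the two subvolumes. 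One then iterates on each piece; each branch that terminates in the compactness alternative produces a genuine minimizer of its subvolume problem.

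The heart of the matter is to show this iteration stops, i.e.\ that the generalized minimizer so produced has only finitely many nontrivial components. Suppose it had infinitely many, of volumes $\mu_{j}>0$ with $\sum_{j}\mu_{j}=m$; then $\mu_{j}\to0$, and by the previous step each component $F^{j}$ is an actual minimizer of $E_{s,g}[\mu_{j}]$, hence (by the regularity in Theorem~\ref{theoremExistMiniAnyVolumeFasterDecay} and the Euler--Lagrange equation of Section~\ref{sectionPreliminary}) a bounded set whose boundary is $C^{1,\alpha}$ off a set of dimension $N-3$, with bounded fractional mean curvature on the regular part. Fix a component $F^{j_{0}}$ with $\mu_{j_{0}}\ge\tfrac{1}{2}\sup_{j}\mu_{j}$ — a fixed positive quantity in this scenario — and then choose another component $F^{j_{1}}$ of volume $t$ as small as we please. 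Adding the volume $t$ to $F^{j_{0}}$ by an outward normal perturbation of its regular boundary, the first variation formulas for $P_{s}$ and $V_{g}$ give $E_{s,g}[\mu_{j_{0}}+t]-E_{s,g}[\mu_{j_{0}}]\le C\,t$ with $C=C(\mu_{j_{0}})$, whereas $E_{s,g}[t]\ge P_{s}(B_{t})=c\,t^{(N-s)/N}$; since $(N-s)/N<1$, for $t$ small we get the strict inequality
\[
	E_{s,g}[\mu_{j_{0}}+t]\;<\;E_{s,g}[\mu_{j_{0}}]+E_{s,g}[t].
\]
Hence replacing $F^{j_{0}}$ and $F^{j_{1}}$ by a single minimizer of volume $\mu_{j_{0}}+t$ strictly decreases $\widetilde{\capE}_{s,g}$, contradicting minimality. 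Therefore the number $M$ of nontrivial components is finite, and by construction $\sum_{k=1}^{M}\capE_{s,g}(E^{k})=\sum_{k=1}^{M}E_{s,g}[|E^{k}|]=E_{s,g}[m]=\widetilde{E}_{s,g}[m]$ with each $E^{k}$ a minimizer of $\capE_{s,g}$ at volume $|E^{k}|$, as claimed.

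I expect the only genuinely delicate point to be the last one: justifying the first variation of $P_{s}$ under a boundary perturbation when the singular set of $F^{j_{0}}$ may be nonempty. The plan is to perturb only on the regular part of $\partial F^{j_{0}}$ (which still carries the full perimeter and can therefore absorb the prescribed extra volume) and to control the residual contributions near the singular set by the density estimates of Section~\ref{sectionPreliminary}, in the same spirit as the regularity theory invoked for Theorem~\ref{theoremExistMiniAnyVolumeFasterDecay}. Everything else — subadditivity and continuity of $E_{s,g}[\cdot]$, the exclusion of vanishing, and the bookkeeping of the cross terms under dichotomy — is a routine adaptation of the arguments already needed for Theorem~\ref{theoremExistMiniAnyVolumeFasterDecay}.
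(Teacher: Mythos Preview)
Your overall strategy---proving $E_{s,g}[m]=\widetilde{E}_{s,g}[m]$ and then running concentration--compactness on a minimizing sequence for $E_{s,g}[m]$---is exactly the paper's approach (Lemma~\ref{lemmaModifyMinimizingSeq} followed by the proof of Theorem~\ref{theoremExistGeneralizedMiniAnyVolume}, which recycles the machinery of Theorem~\ref{theoremExistMiniAnyVolumeFasterDecay} up through the finiteness step). The substantive difference lies in how you argue finiteness of the components, and there your argument has a genuine gap.

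You invoke the boundary regularity from Theorem~\ref{theoremExistMiniAnyVolumeFasterDecay} and an ``Euler--Lagrange equation of Section~\ref{sectionPreliminary}'' to justify the first-variation bound $E_{s,g}[\mu_{j_0}+t]-E_{s,g}[\mu_{j_0}]\le Ct$. But the regularity statement (Lemma~\ref{lemmaRegularityMinimizers}) is proved under $(\mathrm{g}3)$, not $(\mathrm{g}4)$: its proof needs $g\in L^1(\mathR^N)$ in order to estimate $|V_g(F)-V_g(E)|\le 2\|g\|_{L^1}|E\Delta F|$ and conclude $\Lambda$-minimality. Under $(\mathrm{g}4)$ alone $g$ need not be integrable (e.g.\ $g(x)=(1+|x|^N)^{-1}$ is radially non-increasing, vanishes at infinity, but $g\notin L^1(\mathR^N)$), so you cannot appeal to that regularity, and Section~\ref{sectionPreliminary} contains no Euler--Lagrange discussion to fall back on. Your proposed fix---perturb only on the regular part of $\partial F^{j_0}$---still presupposes the very regularity theory that is unavailable.

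The paper bypasses this entirely. Instead of a boundary perturbation it uses the elementary scaling bound $\capE_{s,g}(\lambda E)\le\lambda^{2N}\capE_{s,g}(E)$ of Lemma~\ref{lemmaScalingEnergy}: one dilates a single component of volume $m^{i_{\widetilde p}}\ge m/2^{\widetilde p+1}$ by $\lambda^N=(m^{i_{\widetilde p}}+\mu_H)/m^{i_{\widetilde p}}$ to absorb the tail mass $\mu_H=\sum_{p>H}m^{i_p}$. This gives the linear upper bound $(\lambda^{2N}-1)\,\capE_{s,g}(G^{i_{\widetilde p}})\lesssim\mu_H$ directly, while the fractional isoperimetric inequality supplies the competing lower bound $\sum_{p>H}P_s(G^{i_p})\gtrsim\mu_H^{(N-s)/N}$; for $H$ large the latter dominates, so the tail is energetically unfavorable and $\capI$ is finite. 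This argument needs only $(\mathrm{g}1)$ and $(\mathrm{g}2)$. Replacing your first-variation step with this scaling argument closes the gap and in fact makes the ``delicate point'' you flag disappear altogether.
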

As we mentioned in Section \ref{sectionIntroduction}, the idea of the proof is based on the observation that Problem \eqref{minimizationGeneralizedMinimizerFunctional} can be reduced into Problem \eqref{minimizationGeneralizedFunctional}. 

Finally, we study the asymptotic behavior of minimizers of $\capE_{s,g}$ when the volume goes to infinity, under the assumption that $g$ decays much faster than the kernel $|x|^{-(N+s)}$ of $P_s$ far away from the origin. 

Before stating the theorem, in order to study the behavior of the minimizers of the minimization problem $E_{s,g}[m]$ for any $m>0$, it is convenient to lift the volume constraint onto the functional itself and work with fixed volume $|B_1|$. To see this, we first define a rescaled kernel by 
\begin{equation}\label{rescaledKernel}
	g_{\lambda}(x) \coloneqq \lambda^{N+s} \, g(\lambda\,x)
\end{equation}
for any $x \neq 0$ and $\lambda>0$. Then we show the equivalence of the rescaled problem in the following proposition.
\begin{proposition}[Equivalent problem]\label{propositionRescaledProblem}
	Let $m>0$. Assume that the kernel $g : \mathR^N \setminus \{0\} \to \mathR$ is in $L^1_{loc}(\mathR^N)$. Then, setting $\lambda^N \coloneqq m\,|B_1|^{-1}$, we have that the problem $E_{s,g}[m]$ is equivalent to 
	\begin{equation}\nonumber
		E^{\lambda}_{s,g}(B_1) \coloneqq \inf\left\{ P_s(F) + V_{g_\lambda}(F) \mid \text{$F \subset \mathR^N:$ measurable, $|F| = |B_1|$} \right\}
	\end{equation}
	where $g_{\lambda}$ is given in \eqref{rescaledKernel}.
	
	Moreover, under the assumption that $g$ is integrable on $\mathR^N$, the minimization problem $E_{s,g}[m]$ is also equivalent to Problem \eqref{minimizationScalingModifiedProbelm}.
\end{proposition}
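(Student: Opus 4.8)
The plan is to derive both equivalences from the elementary scaling behaviour of $P_s$ and $V_g$ under dilations. Fix $m>0$ and set $\lambda \coloneqq (m/|B_1|)^{1/N}$, so that $|\lambda F| = m$ exactly when $|F| = |B_1|$. Since the dilation $F \mapsto \lambda F$ is, modulo Lebesgue-null sets, a bijection between $\{F \subset \mathR^N : |F| = |B_1|\}$ and $\{E \subset \mathR^N : |E| = m\}$, with inverse $E \mapsto \lambda^{-1}E$, it suffices to compare $\capE_{s,g}(\lambda F)$ with the energy of $F$ for the rescaled kernel, and then pass to the infimum.

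The core computation is the change of variables $x = \lambda x'$, $y = \lambda y'$ in the double integrals. Using $|\lambda x' - \lambda y'| = \lambda|x'-y'|$, $(\lambda F)^c = \lambda(F^c)$, and $dx\,dy = \lambda^{2N}\,dx'\,dy'$, one obtains
\[
P_s(\lambda F) = \lambda^{N-s}\,P_s(F), \qquad V_g(\lambda F) = \lambda^{2N}\!\int_F\!\int_F g(\lambda(x'-y'))\,dx'\,dy' = \lambda^{N-s}\,V_{g_\lambda}(F),
\]
the last equality being just the definition $g_\lambda(z) = \lambda^{N+s}g(\lambda z)$; these are identities in $[0,+\infty]$, and $V_{g_\lambda}$ is meaningful because $g \in L^1_{loc}(\mathR^N)$ forces $g_\lambda \in L^1_{loc}(\mathR^N)$, so $V_{g_\lambda}$ is finite on balls as in the first Remark above. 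Hence $\capE_{s,g}(\lambda F) = \lambda^{N-s}\big(P_s(F) + V_{g_\lambda}(F)\big)$, and taking the infimum over the corresponding competitor classes gives $E_{s,g}[m] = \lambda^{N-s}\,E^\lambda_{s,g}(B_1)$, with $E$ a minimizer on the left if and only if $\lambda^{-1}E$ is one on the right; this is the first equivalence. For the second, assume in addition $g \in L^1(\mathR^N)$. For any $E$ with $|E| = m < \infty$, Fubini--Tonelli (legitimate since $\int_E\int_{E^c} g \le |E|\,\|g\|_{L^1(\mathR^N)} < \infty$) gives
\[
V_g(E) = \int_E\Big(\int_{\mathR^N} g(x-y)\,dx\Big)\,dy - \int_E\!\int_{E^c} g(x-y)\,dx\,dy = m\,\|g\|_{L^1(\mathR^N)} - \int_E\!\int_{E^c} g(x-y)\,dx\,dy,
\]
so $\capE_{s,g}(E) = m\,\|g\|_{L^1(\mathR^N)} + \big(P_s(E) - \int_E\int_{E^c} g(x-y)\,dx\,dy\big)$ and therefore $E_{s,g}[m] = m\,\|g\|_{L^1(\mathR^N)} + \widehat{E}_{s,g}[m]$. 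Applying the same change of variables to the modified energy yields $P_s(\lambda F) - \int_{\lambda F}\int_{(\lambda F)^c} g(x-y)\,dx\,dy = \lambda^{N-s}\big(P_s(F) - \int_F\int_{F^c} g_\lambda(x-y)\,dx\,dy\big) = \lambda^{N-s}\,\widehat{\capE}^\lambda_{s,g}(F)$, whence $\widehat{E}_{s,g}[m] = \lambda^{N-s}\,\widehat{E}^\lambda_{s,g}(B_1)$ and $E_{s,g}[m] = m\,\|g\|_{L^1(\mathR^N)} + \lambda^{N-s}\,\widehat{E}^\lambda_{s,g}(B_1)$; once more minimizers correspond under $E \leftrightarrow \lambda^{-1}E$. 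Since the three problems differ only through a fixed additive constant and a positive multiplicative one, they are equivalent.

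There is essentially no hard step here: the statement is a scaling bookkeeping exercise, and the argument sketched above is close to complete. The only points deserving care are (i) checking that the dilation induces a bijection of the competitor classes compatible with the volume constraint, so that passing to the infimum on both sides is legitimate; (ii) justifying each integral manipulation — in the first part all integrands are non-negative, so Tonelli's theorem applies and possibly infinite values are harmless, whereas in the second part one genuinely needs $g \in L^1(\mathR^N)$ to split $V_g(E)$; and (iii) making explicit what ``equivalent'' means, by recording the numerical relations $E_{s,g}[m] = \lambda^{N-s}E^\lambda_{s,g}(B_1)$ and $E_{s,g}[m] = m\,\|g\|_{L^1(\mathR^N)} + \lambda^{N-s}\,\widehat{E}^\lambda_{s,g}(B_1)$ together with the one-to-one correspondence of minimizers via $E \mapsto \lambda^{-1}E$.
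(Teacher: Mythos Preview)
Your proof is correct and follows essentially the same approach as the paper's: both compute $\capE_{s,g}(\lambda F)=\lambda^{N-s}\bigl(P_s(F)+V_{g_\lambda}(F)\bigr)$ via the change of variables and then, under $g\in L^1(\mathR^N)$, pass to the modified functional by splitting $V_g(E)=|E|\,\|g\|_{L^1}-\int_E\int_{E^c}g$. If anything, your write-up is more careful than the paper's about the bijection of competitor classes, the use of Tonelli, and the explicit numerical relations between the infima.
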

\begin{proof}
	Given any $E$ with $|E|=m$ and setting $F \coloneqq \lambda^{-1}\,E$ where $\lambda^N = m\,|B_1|^{-1}$, we have that $|F| = |B_1|$ and 
	\begin{align}\label{rescaledIdenityEnergy}
		\capE_{s,g}(E) &= \lambda^{N-s}\,P_s(F) + \lambda^{2N}\,\int_{F}\int_{F}g(\lambda(x-y))\,dx\,dy \nonumber\\
		&= \lambda^{N-s} \left( P_s(F) + \int_{F}\int_{F}\lambda^{N+s}g(\lambda(x-y))\,dx\,dy \right) \nonumber\\
		&= \lambda^{N-s} \left( P_s(F) + V_{g_\lambda}(F) \right)
	\end{align}
	where $g_{\lambda}(x) \coloneqq \lambda^{N+s}\,g(\lambda\,x)$ as in \eqref{rescaledKernel}. For the latter part of the claim, we first recall the equivalent minimization problem
	\begin{equation}\nonumber
		\widehat{E}_{s,g}[m] \coloneqq \inf \left\{P_s(E) - \int_{E}\int_{E^c}g(x-y)\,dx\,dy \right\},
	\end{equation}
	which is equivalent to the problem $E_{s,g}[m]$ for any $m>0$. Thus, from \eqref{rescaledIdenityEnergy}, we obtain that
	\begin{equation}\nonumber
		\capE_{s,g}(E) = \lambda^{N-s} \left( P_s(F) - \int_{F}\int_{F^c}g_{\lambda}(x-y)\,dx\,dy + m\,\|g\|_{L^1(\mathR^N)} \right).
	\end{equation}
	Hence, we conclude that the claim is valid.  
\end{proof}

Now we are prepared to state the last theorem of this present paper.
\begin{theorem}\label{theoremAsympMiniLargeVolume}
	Let $s\in(0,\,1)$ and $\{\lambda_n\}_{n\in\mathN} \subset (1,\,\infty)$ with $\lambda_n \to \infty$ as $n \to \infty$. Let $\{F_n\}_{n\in\mathN}$ be a sequence of minimizers for $\widehat{\capE}^{\lambda_n}_{s,g}$ with $|F_n|=|B_1|$ for each $n\in\mathN$. Assume that the kernel $g: \mathR^N \setminus \{0\} \to \mathR$ is radially symmetric and satisfies the assumptions $(\mathrm{g}1)$, $(\mathrm{g}2)$, and $(\mathrm{g}5)$. Then, the sequence $\{F_n\}_{n\in\mathN}$ converges to the unit ball $B_1$, up to translations, in the sense of $L^1$-topology, namely, 
	\begin{equation}\nonumber
		|F_n \Delta B_1| \xrightarrow[n \to \infty]{} 0.
	\end{equation}
\end{theorem}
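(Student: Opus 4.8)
The plan is to derive the statement from a $\Gamma$-convergence property together with an equi-coercivity estimate. Concretely, I would show that the functionals $\widehat{\capE}^{\lambda}_{s,g}$ $\Gamma$-converge, as $\lambda\to\infty$, to the fractional perimeter $P_s$ with respect to $L^1_{\mathrm{loc}}$-convergence, and that any sequence $\{F_n\}$ as in the statement is pre-compact in $L^1(\mathR^N)$ up to translations; since the unique minimizer (modulo translations) of $P_s$ among sets of volume $|B_1|$ is $B_1$ by the fractional isoperimetric inequality, this gives the conclusion. As a first step I would record the consequences of $(\mathrm{g}5)$: writing $g_\lambda(z)=\lambda^{N+s}g(\lambda z)$ as in \eqref{rescaledKernel}, the bound $g(z)\le\gamma|z|^{-(N+s)}$ gives $g_\lambda(z)\le\gamma|z|^{-(N+s)}$ for every $\lambda>0$, hence
\begin{equation}\nonumber
	(1-\gamma)\,P_s(F)\ \le\ \widehat{\capE}^{\lambda}_{s,g}(F)\ \le\ P_s(F)
\end{equation}
for every $F$ with $P_s(F)<\infty$. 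Comparing $F_n$ with $B_1$ and observing that on $B_1\times B_1^c$ the integrand $g_{\lambda_n}(x-y)$ is dominated by the integrable kernel $\gamma|x-y|^{-(N+s)}$ and tends to $0$ pointwise (because $g(z)=o(|z|^{-(N+s)})$), dominated convergence yields $\int_{B_1}\int_{B_1^c}g_{\lambda_n}(x-y)\,dx\,dy\to0$; therefore $\widehat{\capE}^{\lambda_n}_{s,g}(F_n)\le\widehat{\capE}^{\lambda_n}_{s,g}(B_1)\le P_s(B_1)$, with $\limsup_n\widehat{\capE}^{\lambda_n}_{s,g}(F_n)\le P_s(B_1)$, and, using the lower bound, $\sup_nP_s(F_n)\le P_s(B_1)/(1-\gamma)$.

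The technical core is the $\Gamma$-liminf inequality: if $F_n\to F$ in $L^1_{\mathrm{loc}}$ with $\sup_nP_s(F_n)<\infty$, then $\liminf_n\widehat{\capE}^{\lambda_n}_{s,g}(F_n)\ge P_s(F)$. I would set $\varepsilon(R):=\sup_{|z|\ge R}|z|^{N+s}g(z)$, so that $\varepsilon(R)\le\gamma$ for all $R$ and $\varepsilon(R)\to0$ as $R\to\infty$, split the double integral in the subtracted term of $\widehat{\capE}^{\lambda_n}_{s,g}(F_n)$ at the scale $|x-y|=R/\lambda_n$, and bound $g_{\lambda_n}(x-y)\le\gamma|x-y|^{-(N+s)}$ on $\{|x-y|\le R/\lambda_n\}$ and $g_{\lambda_n}(x-y)\le\varepsilon(R)|x-y|^{-(N+s)}$ on $\{|x-y|>R/\lambda_n\}$. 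This gives
\begin{equation}\nonumber
	\widehat{\capE}^{\lambda_n}_{s,g}(F_n)\ \ge\ (1-\gamma)\,A_n(R/\lambda_n)+(1-\varepsilon(R))\,B_n(R/\lambda_n)\ \ge\ (1-\varepsilon(R))\,B_n(R/\lambda_n),
\end{equation}
where $A_n(\delta):=\int_{F_n}\int_{F_n^c}|x-y|^{-(N+s)}\mathbf{1}_{\{|x-y|\le\delta\}}\,dx\,dy$ and $B_n(\delta)$ is the analogous integral over $\{|x-y|>\delta\}$; the point is that the non-negative term $A_n(R/\lambda_n)$ is simply discarded, which is precisely what lets the argument avoid a uniform bound on the \emph{classical} perimeter of the $F_n$ (such a bound is not available). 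For fixed $\epsilon_0>0$ and $n$ large enough that $R/\lambda_n<\epsilon_0$ one has $B_n(R/\lambda_n)\ge B_n(\epsilon_0)$, and since $|x-y|^{-(N+s)}\mathbf{1}_{\{|x-y|>\epsilon_0\}}$ is bounded and integrable while $\mathbf{1}_{F_n}\to\mathbf{1}_F$ in $L^1_{\mathrm{loc}}$ (hence a.e.\ along a subsequence), two applications of Fatou's lemma give $\liminf_nB_n(\epsilon_0)\ge\int_F\int_{F^c}|x-y|^{-(N+s)}\mathbf{1}_{\{|x-y|>\epsilon_0\}}\,dx\,dy$. Letting $\epsilon_0\to0$ by monotone convergence and then $R\to\infty$ yields $\liminf_n\widehat{\capE}^{\lambda_n}_{s,g}(F_n)\ge P_s(F)$. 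The $\Gamma$-limsup is trivial with the constant recovery sequence $F_n\equiv F$, for which $\widehat{\capE}^{\lambda_n}_{s,g}(F)\to P_s(F)$ by the same dominated-convergence argument used for $B_1$.

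It remains to establish equi-coercivity. Since $\sup_n\bigl(P_s(F_n)+|F_n|\bigr)<\infty$, the sequence $\{\mathbf{1}_{F_n}\}$ is bounded in $W^{s,1}(\mathR^N)$, and I would apply the concentration-compactness alternative as in Section \ref{sectionExisMiniFastDecayGAnyVol}: up to a subsequence, compactness (up to translations), vanishing, or dichotomy holds. Vanishing is excluded by $\sup_nP_s(F_n)<\infty$, since distributing the fixed mass $|B_1|$ over more and more separated regions forces $P_s(F_n)\to\infty$ (fractional isoperimetric inequality, applied locally). Dichotomy is excluded because, if $F_n$ splits as $F_n^1\cup F_n^2$ with $\mathrm{dist}\,(F_n^1,F_n^2)\to\infty$, $|F_n^i|\to m_i>0$ and $m_1+m_2=|B_1|$, then the cross-interaction contributions to both $P_s$ and the subtracted term are $O\bigl(\mathrm{dist}\,(F_n^1,F_n^2)^{-s}\bigr)\to0$ (using $g_\lambda(z)\le\gamma|z|^{-(N+s)}$ for the latter), so $\widehat{\capE}^{\lambda_n}_{s,g}(F_n)=\widehat{\capE}^{\lambda_n}_{s,g}(F_n^1)+\widehat{\capE}^{\lambda_n}_{s,g}(F_n^2)+o(1)$; the $\Gamma$-liminf applied to each piece and the strict sub-additivity of $t\mapsto t^{(N-s)/N}$ then give $\liminf_n\widehat{\capE}^{\lambda_n}_{s,g}(F_n)\ge P_s(B_{r_1})+P_s(B_{r_2})>P_s(B_1)$ with $|B_{r_i}|=m_i$, contradicting the upper bound. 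Hence, up to translations and a subsequence, $F_n\to F_\infty$ in $L^1(\mathR^N)$ with $|F_\infty|=|B_1|$; the $\Gamma$-liminf and the upper bound give $P_s(F_\infty)\le\liminf_n\widehat{\capE}^{\lambda_n}_{s,g}(F_n)\le P_s(B_1)$, so $F_\infty$ is isoperimetric, and the rigidity of the fractional isoperimetric inequality forces $F_\infty=B_1$ up to a translation. A routine subsequence argument then upgrades this to convergence of the full sequence, i.e.\ there are translations making $|F_n\Delta B_1|\to0$.

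The step I expect to require the most care is the equi-coercivity, namely the exclusion of vanishing and dichotomy: this amounts to re-running the concentration-compactness scheme of the earlier sections for a sequence with a \emph{varying} parameter $\lambda_n$, and one has to verify both that the cross-interaction terms of separating pieces are genuinely negligible under $(\mathrm{g}5)$ and that the obstruction to vanishing is quantitative. By comparison the $\Gamma$-convergence step is robust; its only subtlety is the truncation at scale $R/\lambda_n$ in the $\Gamma$-liminf, which is exactly what removes the need for a (non-available) uniform classical-perimeter bound.
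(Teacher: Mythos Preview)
Your proposal follows the same overall route as the paper: a $\Gamma$-convergence of $\widehat{\capE}^{\lambda}_{s,g}$ to $P_s$ (your truncation at scale $R/\lambda_n$ is exactly the argument of Proposition~\ref{propositionGammaConvergenceNonlocalEnergy}), combined with a concentration-compactness analysis of the minimizing sequence and the rigidity of the fractional isoperimetric inequality. The uniform bound $\sup_n P_s(F_n)\le(1-\gamma)^{-1}P_s(B_1)$ and the observation $\limsup_n\widehat{\capE}^{\lambda_n}_{s,g}(F_n)\le P_s(B_1)$ are also the paper's starting points.

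The one place where your sketch is genuinely incomplete is the exclusion of dichotomy. You write that ``the $\Gamma$-liminf applied to each piece'' yields $\liminf_n\widehat{\capE}^{\lambda_n}_{s,g}(F_n)\ge P_s(B_{r_1})+P_s(B_{r_2})$, but the Lions dichotomy alternative by itself does \emph{not} give $L^1_{loc}$-convergence (after translation) of each piece $F_n^i$ to a set of mass $m_i$; it only asserts that the mass splits. Without that convergence you cannot invoke your $\Gamma$-liminf, and the naive substitute $\widehat{\capE}^{\lambda_n}_{s,g}(F_n^i)\ge(1-\gamma)P_s(F_n^i)\ge(1-\gamma)P_s(B_{r_i})$ loses the factor $(1-\gamma)$ and no longer contradicts the upper bound $P_s(B_1)$. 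The paper circumvents this by using the cube-decomposition variant of concentration-compactness from \cite{dCNRV}: one obtains directly a countable family $\{G^i\}_{[i]\in\capI}$ with $\sum_{[i]}|G^i|=|B_1|$ and the single inequality $\sum_{[i]}P_s(G^i)\le\liminf_n\widehat{\capE}^{\lambda_n}_{s,g}(F_n)\le P_s(B_1)$ (see \eqref{keyPropertylowSemicontiVolumeEqualParticlesAsymptotic}), which together with the isoperimetric inequality and strict subadditivity of $t\mapsto t^{(N-s)/N}$ forces all but one $G^i$ to be null and that one to be a ball. This is precisely the iteration/limit that your two-piece dichotomy argument is missing; once you replace the trichotomy by this decomposition, your proof coincides with the paper's.
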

\begin{remark}
	In this paper, we basically assume that the kernel $g$ is locally integrable in $\mathR^N$, especially near the origin; however, Theorem \ref{theoremAsympMiniLargeVolume} is still valid even if $g$ is not integrable in the ball centred at the origin. This is because the assumption that $g(x) \leq |x|^{-(N+s)}$ for $x \neq 0$ is sufficient enough for the nonlocal perimeter $P_g$ to be finite for any ball $B$. We emphasize that the local integrability of $g$ ensures that Problem \eqref{minimizationScalingModifiedProbelm} is equivalent to Problem \eqref{minimizationGeneralizedFunctional}, which is a nonlocal and generalized version of the liquid drop model by Gamow. 
\end{remark}
The idea of the proof is based on the same argument of Theorem \ref{theoremExistMiniAnyVolumeFasterDecay} and the $\Gamma$-convergence result on the energy $\widehat{\capE}^{\lambda}_{s,g}$ as $\lambda \to \infty$. We will give the precise strategy of the proof in Section \ref{sectionAsymptoticMiniLargeVol} and, for the $\Gamma$-convergence result, the readers should refer to Proposition \ref{propositionGammaConvergenceNonlocalEnergy} in Section \ref{sectionAsymptoticMiniLargeVol}.

\section{Preliminary results for minimizers of $\capE_{s,g}$}\label{sectionPreliminary}
In this section, we collect several properties for minimizers of $\capE_{s,g}$ under the assumptions on $g$ in Section \ref{sectionMainResults} 

First of all, we recall one important property on the fractional perimeter $P_s$ with $0<s<1$.
\begin{proposition}\label{propositionIntersectionConvexSmaller}
	For any $s \in (0,\,1)$ and measurable set $E \subset \mathR^N$ with $|E|<\infty$, it follows that $P_s(E \cap K) \leq P_s(E)$ for every convex set $K \subset \mathR^N$. 
\end{proposition}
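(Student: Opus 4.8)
The plan is to reduce the inequality to the case in which $K$ is a half-space, where a reflection argument can be used, and then bootstrap.

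\textbf{Reduction to a half-space.} If $\mathrm{int}\,K=\emptyset$ then $K$ is contained in a hyperplane, so $|E\cap K|=0$ and $P_s(E\cap K)=0\le P_s(E)$; hence we may assume $K$ has nonempty interior, and since $\partial K$ is Lebesgue null we may replace $K$ by its closure without changing $E\cap K$ up to a null set. Write the closed convex set $K$ as a countable intersection of closed half-spaces, $K=\bigcap_{i\ge 1}H_i$, and set $K_n:=H_1\cap\cdots\cap H_n$. Then $\chi_{E\cap K_n}\to\chi_{E\cap K}$ pointwise, hence — since $|E|<\infty$ — in $L^1(\mathR^N)$ by dominated convergence. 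As $P_s$ is lower semicontinuous with respect to $L^1_{\mathrm{loc}}$-convergence (Fatou's lemma in the double integral defining $P_s$), we get $P_s(E\cap K)\le\liminf_n P_s(E\cap K_n)$, so it suffices to prove $P_s(E\cap K_n)\le P_s(E)$ for every $n$. Using the recursion $E\cap K_n=(E\cap K_{n-1})\cap H_n$ together with $|E\cap K_{n-1}|\le|E|<\infty$, an induction reduces everything to the statement $P_s(E'\cap H)\le P_s(E')$ for an arbitrary half-space $H$ and an arbitrary measurable $E'$ with finite measure.

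\textbf{The half-space case.} Fix a half-space $H$, let $\sigma$ be the orthogonal reflection across $\partial H$, and put $A:=E\cap H$, $B:=E\cap H^c$. Splitting the double integrals $P_s(E)=\iint_{E\times E^c}|x-y|^{-N-s}\,dx\,dy$ and $P_s(A)=\iint_{A\times A^c}|x-y|^{-N-s}\,dx\,dy$ according to the partition $\mathR^N=H\cup H^c$, and using that $\chi_E$ and $\chi_A$ agree on $H$ while $\chi_A\equiv 0$ on $H^c$, one obtains the identity
\[
P_s(E)-P_s(E\cap H)=\iint_{B\times E^c}\frac{dx\,dy}{|x-y|^{N+s}}-\iint_{A\times B}\frac{dx\,dy}{|x-y|^{N+s}}.
\]
Thus the problem reduces to the single inequality $\iint_{A\times B}|x-y|^{-N-s}\,dx\,dy\le\iint_{B\times E^c}|x-y|^{-N-s}\,dx\,dy$. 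The tools for this are: the elementary estimate $|\sigma x-y|\le|x-y|$ for $x\in H$, $y\in H^c$ (the reflected point lies on the same side of $\partial H$ as $y$, hence no farther), so that the kernel only increases under $x\mapsto\sigma x$; and the explicit computation $\int_{H^c}|x-y|^{-N-s}\,dy=c_{N,s}\,\mathrm{dist}(x,\partial H)^{-s}$. Combining these with a decomposition of $A$ into the part whose $\sigma$-image lies in $E$ and the part whose $\sigma$-image lies in $E^c$, one estimates $\iint_{A\times B}|x-y|^{-N-s}$ from above by the interaction of $B$ with $E^c$, and inserting this into the identity above gives $P_s(E\cap H)\le P_s(E)$.

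\textbf{Where the difficulty lies.} The delicate step is the last one. Reflecting all of $A=E\cap H$ across $\partial H$ at once is too lossy: whenever $E$ overlaps its own reflection inside $H^c$ the reflected integral $\iint_{\sigma A\times B}|x-y|^{-N-s}$ can be infinite — this already happens when $E$ is a ball whose center lies on $\partial H$ — while the quantity to be bounded is finite. One therefore has to isolate the ``symmetric core'' $\{x\in H:\sigma x\in E\}$ and handle its contribution separately, exploiting the one-dimensional structure of the kernel in the direction orthogonal to $\partial H$ (the identity for $\int_{H^c}|x-y|^{-N-s}\,dy$ above and the monotonicity/convexity of $t\mapsto t^{-s}$). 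This is the only genuinely nontrivial point; once the half-space inequality is established, the approximation and lower-semicontinuity steps in the first paragraph conclude the proof for all convex sets.
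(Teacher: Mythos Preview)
The paper does not supply a proof of this proposition; it simply quotes \cite[Lemma~B.1]{FFMMM} and refers to \cite{CRS,ADPM}. So there is no in-paper argument to compare your attempt against, and the relevant question is whether your proposal stands on its own.

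Your reduction to half-spaces is correct and standard: approximating a closed convex body by finite intersections of half-spaces, combined with the $L^1$–lower semicontinuity of $P_s$, legitimately reduces the statement to $P_s(E\cap H)\le P_s(E)$ for a single half-space $H$. Your identity
\[
P_s(E)-P_s(E\cap H)=\iint_{B\times E^c}\frac{dx\,dy}{|x-y|^{N+s}}-\iint_{A\times B}\frac{dx\,dy}{|x-y|^{N+s}},\qquad A=E\cap H,\ B=E\cap H^c,
\]
is also correct.

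The gap is exactly where you say it is, and you have not closed it. The proof requires the inequality $\iint_{A\times B}\le \iint_{B\times E^c}$, and your text only \emph{asserts} that the decomposition $A=\{x\in A:\sigma x\in E\}\cup\{x\in A:\sigma x\in E^c\}$ together with ``the one-dimensional structure'' and ``convexity of $t\mapsto t^{-s}$'' yields this, without carrying out a single estimate. The easy half is fine: on $A_2:=\{x\in A:\sigma x\notin E\}$ the reflection bound $|\sigma x-y|\le|x-y|$ gives $\iint_{A_2\times B}\le \iint_{\sigma A_2\times B}\le \iint_{(H^c\setminus B)\times B}$. But for the ``symmetric core'' $A_1:=\{x\in A:\sigma x\in B\}$ one is left with proving
\[
\iint_{A_1\times B}\frac{dx\,dy}{|x-y|^{N+s}}\ \le\ \iint_{B\times (H\setminus A)}\frac{dx\,dy}{|x-y|^{N+s}},
\]
and neither the kernel identity $\int_{H^c}|x-y|^{-N-s}\,dy=c\,\mathrm{dist}(x,\partial H)^{-s}$ nor convexity of $t^{-s}$ by itself produces this bound; you have to say precisely how the pieces fit together, and you do not. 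Your own paragraph ``Where the difficulty lies'' is, in effect, a statement that the proof is incomplete. As written, the proposal is an outline that stops at the only nontrivial step; to turn it into a proof you must actually execute the estimate for the symmetric core (this is exactly what \cite[Lemma~B.1]{FFMMM} does).
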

The proof can be found in \cite[Lemma B.1]{FFMMM} and we do not give a proof of this proposition here. We also refer to \cite[Corollary 5.3]{CRS} and \cite{ADPM} for related properties to Proposition \ref{propositionIntersectionConvexSmaller}. 

The assumption that $g$ is radially non-increasing enables us to show the scaling property of $\capE_{s,g}$ by simple computations.
\begin{lemma}[Scaling lemma]\label{lemmaScalingEnergy}
	Let $E \subset \mathR^N$ be a measurable set with $|E| <\infty$. Assume that the kernel $g:\mathR^N \setminus \to \mathR$ satisfies $(\mathrm{g}1)$ and $(\mathrm{g}2)$. Then, for any $\lambda \geq 1$, it follows that
	\begin{equation}\nonumber
		\capE_{s,g}(\lambda \,E) \leq \lambda^{2N} \capE_{s,g}(E).
	\end{equation}
\end{lemma}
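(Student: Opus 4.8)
The plan is to prove the scaling inequality $\capE_{s,g}(\lambda E) \leq \lambda^{2N}\capE_{s,g}(E)$ for $\lambda \geq 1$ by treating the two pieces of the energy separately, since $\capE_{s,g} = P_s + V_g$ splits naturally under dilation.

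First I would compute the exact scaling of the fractional perimeter. Substituting $x = \lambda x'$, $y = \lambda y'$ in the double integral defining $P_s(\lambda E)$ gives $P_s(\lambda E) = \lambda^{2N} \lambda^{-(N+s)} P_s(E) = \lambda^{N-s} P_s(E)$. Since $\lambda \geq 1$ and $N - s < 2N$ (because $N \geq 1$ and $s < 1$, so $N+s > 0$), we get $P_s(\lambda E) = \lambda^{N-s} P_s(E) \leq \lambda^{2N} P_s(E)$. This step is a routine change of variables.

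Next I would handle the Riesz potential term. Changing variables the same way, $V_g(\lambda E) = \int_{E}\int_{E} \lambda^{2N} g(\lambda(x-y))\,dx\,dy$. Here is where assumption $(\mathrm{g}1)$ enters: since $g$ is radially non-increasing, $g(\lambda z) \leq g(z)$ for all $z \neq 0$ and $\lambda \geq 1$ (this is literally the statement of $(\mathrm{g}1)$, and $(\mathrm{g}2)$ guarantees $g$ is well-defined as a radial function so the inequality makes sense pointwise a.e.). Applying this with $z = x - y$ inside the integral yields $V_g(\lambda E) \leq \lambda^{2N} \int_{E}\int_{E} g(x-y)\,dx\,dy = \lambda^{2N} V_g(E)$.

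Combining the two estimates, $\capE_{s,g}(\lambda E) = P_s(\lambda E) + V_g(\lambda E) \leq \lambda^{2N} P_s(E) + \lambda^{2N} V_g(E) = \lambda^{2N}\capE_{s,g}(E)$, which is the claim. I do not anticipate any real obstacle here: the only subtlety is making sure the pointwise bound $g(\lambda z) \leq g(z)$ from $(\mathrm{g}1)$ is applied correctly under the integral sign (valid by monotonicity of the integral, and the integrand is non-negative by $(\mathrm{g}1)$ so there are no integrability issues beyond $V_g(E) < \infty$, which holds for the sets of interest since $|E| < \infty$ and $g \in L^1_{loc}$ as noted in the first Remark). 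The inequality is genuinely lossy for the perimeter part — one could state the sharper $\capE_{s,g}(\lambda E) \leq \max\{\lambda^{N-s}, \lambda^{2N}\} \capE_{s,g}(E) = \lambda^{2N}\capE_{s,g}(E)$ for $\lambda \geq 1$ — but the stated bound is all that is needed later.
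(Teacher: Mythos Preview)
Your proposal is correct and follows essentially the same approach as the paper: change variables in each term separately, use the exact scaling $P_s(\lambda E)=\lambda^{N-s}P_s(E)\le\lambda^{2N}P_s(E)$ for the perimeter, and invoke $(\mathrm{g}1)$ to bound $g(\lambda(x-y))\le g(x-y)$ inside the Riesz integral to get $V_g(\lambda E)\le\lambda^{2N}V_g(E)$, then add. The paper's argument is identical in structure and detail.
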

\begin{proof}
	From the change of variables and the choice of $\lambda>1$, we have
	\begin{equation}\label{scalingEsti01}
		P_s(\lambda\,E) = \int_{\lambda\,E}\int_{\lambda\,E^c}\frac{dx\,dy}{|x-y|^{N+s}} = \lambda^{N-s} \int_{E}\int_{E^c} \frac{dx\,dy}{|x-y|^{N+s}} = \lambda^{N-s} P_s(E) \leq \lambda^{2N} P_s(E)
	\end{equation}
	for any $E \subset \mathR^N$ and $\lambda >1$. From the assumptions on $g$ and the change of variables again, we can compute the Riesz potential as follows:
	\begin{equation}\label{scalingEsti02}
		V_g(\lambda\,E) = \lambda^{2N} \int_{E}\int_{E} g(\lambda(x-y))\,dx\,dy \leq  \lambda^{2N} \int_{E}\int_{E} g(x-y)\,dx\,dy = \lambda^{2N} V_g(E)
	\end{equation}
	for any $E \subset \mathR^N$ with $|E| <\infty$ and $\lambda>1$. Therefore, from \eqref{scalingEsti01} and \eqref{scalingEsti02}, we obtain
	\begin{equation}\nonumber
		\capE_{s,g}(\lambda \,E) \leq \lambda^{2N} (P_s(E) +  V_g(E)) \leq \lambda^{2N} \capE_{s,g}(E)
	\end{equation}
	and this completes the proof.
\end{proof}

We next prove the boundedness of minimizers of $\capE_{s,g}$ among sets of volume $m$.
\begin{lemma}[Boundedness of minimizers] \label{lemmaBoundednessMinimizers}
	Let $m>0$. Assume that the kernel $g$ satisfies the conditions $(\mathrm{g}1)$ and $(\mathrm{g}2)$. If $E \subset \mathR^N$ is a minimizer of $\capE_{s,g}$ with the volume $m$, then $E$ is bounded up to negligible sets, namely, there exists a constant $\hat{R}>0$ such that $|E \setminus B_{\hat{R}}(0)| = 0$.
\end{lemma}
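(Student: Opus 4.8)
The plan is to argue by contradiction. Suppose that $E$ is not essentially bounded; equivalently, the function $u(R):=|E\setminus B_R(0)|$ is strictly positive for every $R>0$. Note that $u$ is non-increasing and continuous (because $|E|=m<\infty$), that $u(R)\to0$ as $R\to\infty$, and that $-u'(R)=\mathcal H^{N-1}(E\cap\partial B_R)$ for a.e.\ $R$ by the coarea formula. I want to contradict $u(R)\to0$ by producing a uniform lower bound $u(R_j)\ge c>0$ along some sequence $R_j\to\infty$. The first step is an almost-monotonicity estimate for $P_s$ along the truncations of $E$, extracted from the scaling lemma. Fix $R$ so large that $u(R)\le m/2$ and set $\lambda_R:=(m/|E\cap B_R|)^{1/N}\ge1$, so that $|\lambda_R(E\cap B_R)|=m$. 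Testing the minimality of $E$ against $\lambda_R(E\cap B_R)$ and combining it with Lemma~\ref{lemmaScalingEnergy}, Proposition~\ref{propositionIntersectionConvexSmaller} (which gives $P_s(E\cap B_R)\le P_s(E)$) and $0\le V_g(E\cap B_R)\le V_g(E)$ (here $g\ge0$ is used), one gets
\[
	P_s(E)-P_s(E\cap B_R)\;\le\;(\lambda_R^{2N}-1)\,\capE_{s,g}(E)\;\le\;C_1\,u(R)\qquad\text{for all }R\ge R_1,
\]
where $C_1=C_1(N,s,m,E)$ and we used $\lambda_R^{2N}-1\le 8u(R)/m$ when $u(R)\le m/2$.

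The second step is to convert this into a closed inequality for $u$. Write $A_R:=E\cap B_R$, $G_R:=E\setminus B_R$ and $\mathcal I_R:=\int_{A_R}\int_{G_R}|x-y|^{-N-s}\,dx\,dy$. The splitting identity $P_s(E)=P_s(A_R)+P_s(G_R)-2\mathcal I_R$ together with the bound above gives $P_s(G_R)\le C_1u(R)+2\mathcal I_R$, while the fractional isoperimetric inequality gives $P_s(G_R)\ge c_{N,s}\,u(R)^{(N-s)/N}$. To estimate $\mathcal I_R$, observe that for $y\in G_R$ every $x\in A_R\subset B_R$ satisfies $|x-y|\ge|y|-R$, hence $\int_{A_R}|x-y|^{-N-s}\,dx\le c_{N,s}(|y|-R)^{-s}$ and so $\mathcal I_R\le c_{N,s}\int_R^\infty (r-R)^{-s}\,\mathcal H^{N-1}(E\cap\partial B_r)\,dr$. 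The contribution of $[R+1,\infty)$ to this integral is at most $c_{N,s}\,u(R)$; the contribution of $[R,R+1]$ must be controlled by choosing $R$ carefully — e.g.\ inside a window of radii where its average over $R$ is small, which is available since $\int_0^\infty\mathcal H^{N-1}(E\cap\partial B_r)\,dr=m<\infty$. Granting an estimate $\mathcal I_{R_j}\le C_2\,u(R_j)$ along a sequence $R_j\to\infty$, the two perimeter bounds and Step~1 give $c_{N,s}\,u(R_j)^{(N-s)/N}\le (C_1+2C_2)\,u(R_j)$, that is $u(R_j)^{s/N}\ge c_{N,s}/(C_1+2C_2)$, which contradicts $u(R_j)\to0$ and proves $E$ bounded. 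Alternatively, running the same argument at the pair of radii $R$ and $R+1$ yields a discrete differential inequality for $u$ that forces it to vanish for $R$ large.

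The delicate point, and the step I expect to be the main obstacle, is precisely the control of the nonlocal interaction $\mathcal I_R$ created by the truncation: unlike the classical perimeter, cutting $E$ along a sphere does not localize the energy, so $\mathcal I_R$ need not be small merely because the slice $\mathcal H^{N-1}(E\cap\partial B_R)$ is small, and some averaging/good-radius argument (or a comparison argument for $u$) seems unavoidable to pin it down against the tail volume $u(R)$. A shorter but less self-contained route is to observe that a minimizer of $\capE_{s,g}$ is a $\Lambda$-minimizer of the fractional perimeter $P_s$ — the Riesz potential $V_g$ being a lower-order perturbation, Lipschitz in $L^1$ on bounded sets — hence satisfies a uniform lower density estimate $|E\cap B_r(x)|\ge c\,r^N$ at every point $x$ of its measure-theoretic support and all small $r>0$; since $|E|<\infty$, only finitely many pairwise disjoint such balls can fit, and therefore $E$ is contained, up to a null set, in a bounded region.
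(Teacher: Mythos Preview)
Your setup matches the paper's: same contradiction hypothesis, same competitor $\lambda_R(E\cap B_R)$, same splitting identity for $P_s$, same isoperimetric lower bound on $P_s(E\setminus B_R)$, and the same estimate
\[
\mathcal I_R \;\le\; c_{N,s}\int_R^{\infty}\frac{-u'(\sigma)}{(\sigma-R)^{s}}\,d\sigma .
\]
Where you diverge from the paper is the final step, and this is where the argument breaks. You try to obtain a \emph{pointwise} bound $\mathcal I_{R_j}\le C_2\,u(R_j)$ along a good sequence of radii, and you candidly flag this as the delicate point. It is in fact the obstacle: even after averaging $R$ over a unit window, Fubini only gives you a bound of the near-contribution by $C\,u(R_0)$ for some $R\in[R_0,R_0+1]$, and since $u(R)\le u(R_0)$, plugging this back into $c\,u(R)^{(N-s)/N}\le C'u(R_0)$ is the wrong direction---it is perfectly compatible with $u\to 0$ and yields no contradiction. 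The discrete differential inequality you mention suffers from the same defect.

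The paper's resolution is not to seek a pointwise bound at all. Starting from
\[
c_1\,u(r)^{\frac{N-s}{N}} \;\le\; c_2\int_r^{\infty}\frac{-u'(\sigma)}{(\sigma-r)^{s}}\,d\sigma
\quad\text{for all large }r,
\]
one integrates both sides over $r\in[R,\infty)$ and swaps the order of integration on the right via Fubini, obtaining
\[
c_1\int_R^{\infty}u(r)^{\frac{N-s}{N}}\,dr \;\le\; \frac{c_2}{1-s}\int_R^{\infty}\bigl(-u'(\sigma)\bigr)(\sigma-R)^{1-s}\,d\sigma .
\]
This closed integral inequality (essentially a fractional De Giorgi iteration; cf.\ \cite[Lemma~4.1]{dCNRV}) then forces $u(R)=0$ for $R$ large. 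So the missing idea is: integrate in $R$ to kill the singularity rather than try to beat it pointwise.

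Finally, your alternative route via $\Lambda$-minimality does not apply under the stated hypotheses: to get a Lipschitz bound for $V_g$ in $L^1$ (hence $\Lambda$-minimality of $P_s$) you need $g\in L^1(\mathR^N)$, which $(\mathrm g1)$--$(\mathrm g2)$ alone do not guarantee (e.g.\ $g(x)=|x|^{-\alpha}$ with $\alpha\in(0,N)$). The paper itself uses integrability of $g$ both in the density estimate and in the regularity section, but not here.
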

\begin{proof}
	Let $E$ be a minimizer of $\capE_{s,g}$ with $|E|=m$. By setting $\phi(r) \coloneqq |E \setminus B_r(0)|$ for any $r>0$, we have that $\phi^{\prime}(r) = -\capH^{N-1}( E \cap \partial B_r(0) )$ for a.e. $r>0$. In order to prove the claim, we suppose by contradiction that $\phi(r) > 0$ for any $r>0$. Setting $E_r \coloneqq E \cap B_r(0)$ for any $r>0$ and $\lambda_r \coloneqq \frac{m}{m - \phi(r)}$ for any $r>0$, then we choose $\lambda_r  E_r$ as the competitor of $E$ if $\phi(r) < m$ and thus, we have that
	\begin{align}\label{estimateMinimalityForBoundedness}
		\capE_{s,g}(E) \leq \capE_{s,g}(\lambda_r  E_r) &\leq (\lambda_r)^{N-s} P_s(E_r) + (\lambda_r)^{2N} V_{g_{\lambda_r}}(E_r) \nonumber\\
		&\leq \capE_{s,g}(E_r) + \left((\lambda_r)^{N-s} - 1\right)P_s(E_r) + \left((\lambda_r)^{2N} - 1\right) V_g(E_r).
	\end{align}
	Since $\phi(r) \to 0$ as $r \to \infty$, we can choose a constant $R_0>0$ such that $\phi(r) \leq m/2$ for any $r \geq R_0$ and thus, we may assume that
	\begin{equation}\label{estimateMinimalityForBoundedness02}
		(\lambda_r)^{N-s} -1 \leq c_0\,\phi(r), \quad (\lambda_r)^{2N} - 1 \leq c'_0\,\phi(r)
	\end{equation}
	for any $r \geq R_0$ where $c_0$ and $c'_0$ are some positive constants depending only on $N$, $s$, and $m$. Then, by using the decomposition property of $P_s$ and $V_g$ and combining \eqref{estimateMinimalityForBoundedness02} with \eqref{estimateMinimalityForBoundedness}, we have that 
	\begin{align}\label{estimateMinimalityForBoundedness03}
		P_s(E \setminus B_r(0)) &\leq P_s(E \setminus B_r(0)) + V_g(E \setminus B_r(0)) \nonumber\\
		&\leq 2\int_{E \cap B_r(0)}\int_{E \setminus B_r(0)}\frac{dx\,dy}{|x-y|^{N+s}} + c_0\,\phi(r)\,P_s(E_r) + c'_0\,\phi(r)\,V_g(E_r) 
	\end{align}
	for any $r \geq R_0$. From Proposition \ref{propositionIntersectionConvexSmaller} and the definition of $V_g$, we have that, for any $r>0$,
	\begin{equation}\nonumber
		P_s(E_r) + V_g(E_r) \leq P_s(E) + V_g(E) = E_{s,g}[m].
	\end{equation}
	Thus, from \eqref{estimateMinimalityForBoundedness03}, we obtain that
	\begin{equation}\nonumber
		P_s(E \setminus B_r(0)) \leq 2\int_{E \cap B_r(0)}\int_{E \setminus B_r(0)}\frac{dx\,dy}{|x-y|^{N+s}} + (c_0+c'_0)E_{s,g}[m]\,\phi(r)
	\end{equation}
	for any $r \geq R_0$. Now using the isoperimetric inequality of $P_s$ and the fact that $E \cap B_r(0) \subset B^c_{|y|-r}(y)$ for any $y \in E \setminus B_r(0)$, we obtain
	\begin{align}\label{estimateMinimality}
		\frac{P_s(B_1)}{|B_1|^{\frac{N-s}{N}}}\,\phi(r)^{\frac{N-s}{N}} &\leq 2\int_{E \setminus B_r(0)}\int_{B^c_{r-|y|}(y)}\frac{dx\,dy}{|x-y|^{N+s}} + (c_0+c'_0)E_{s,g}[m]\,\phi(r) \nonumber\\
		&= \frac{2|\partial B_1|}{s}\int_{E \setminus B_r(0)}\int_{|y|-r}^{\infty}\frac{1}{t^{1+s}}\,dt\,dy + (c_0+c'_0)E_{s,g}[m]\,\phi(r) \nonumber\\
		&= \frac{2|\partial B_1|}{s}\int_{E \setminus B_r(0)}\frac{1}{(|y|-r)^s}\,dy + (c_0+c'_0)E_{s,g}[m]\,\phi(r) \nonumber\\
		&= \frac{2|\partial B_1|}{s} \int_{r}^{\infty}\frac{-\phi^{\prime}(\sigma)}{(\sigma - r)^s}\,d\sigma + (c_0+c'_0)E_{s,g}[m]\,\phi(r)
	\end{align}
	for any $r>0$. Here we have used the co-area formula in the last equality. Since $\phi$ is non-increasing, there exists a constant $R'_0=R'_0(N,s,m)>0$ such that
	\begin{equation}\label{absorptionTerm}
		(c_0+c'_0)E_{s,g}[m]\,\phi(r) \leq 	\frac{P_s(B_1)}{2|B_1|^{\frac{N-s}{N}}}\,\phi(r)^{\frac{N-s}{N}}
	\end{equation}
	for any $r \geq \max\{R_0, \, R'_0\}$. From \eqref{estimateMinimality} and \eqref{absorptionTerm}, we obtain
	\begin{equation}\label{estimateMinimalityKey}
		c_1\,\phi(r)^{\frac{N-s}{N}} \leq c_2\,\int_{r}^{\infty}\frac{-\phi^{\prime}(\sigma)}{(\sigma - r)^s}\,d\sigma
	\end{equation}
	for any $r \geq \max\{R_0, \, R'_0\}$ where we set $c_1 \coloneqq (2|B_1|^{\frac{N-s}{N}})^{-1}\,P_s(B_1)$ and $c_2 \coloneqq 2s^{-1}|\partial B_1|$. By integrating the both sides in \eqref{estimateMinimalityKey} over $r \in [R,\,\infty)$ for any fixed constant $R \geq \max\{R_0, \, R'_0\}$ and changing the order of the integration, we obtain
	\begin{align}\label{keyEstiBoundednessMini}
		c_1\,\int_{R}^{\infty}\phi(r)^{\frac{N-s}{N}}\,dr \leq c_2\,\int_{R}^{\infty}\int_{r}^{\infty}\frac{-\phi^{\prime}(\sigma)}{(\sigma - r)^s}\,d\sigma\,dr &= c_2\,\int_{R}^{\infty}\int_{R}^{\sigma} \frac{-\phi^{\prime}(\sigma)}{(\sigma - r)^s}\,dr\,d\sigma \nonumber\\
		&= -\frac{c_2}{1-s}\, \int_{R}^{\infty}\phi^{\prime}(\sigma)\,(\sigma - R)^{1-s}\,d\sigma.
	\end{align}
	Hence, by employing the same argument shown in \cite[Lemma 4.1]{dCNRV} and \cite[Proposition 3.2]{CeNo} together with \eqref{keyEstiBoundednessMini}, we obtain that $\phi(R) = 0$, which contradicts the assumption that $\phi(r)>0$ for any $r>0$. Therefore, we conclude the existence of the constant $\hat{R}>0$ such that $|E \setminus B_{\hat{R}}| = 0$.
\end{proof}

Next, by using assumption $(\mathrm{g}4)$, we show the sub-additivity result of the function $m \mapsto E_{s,g}[m]$. We recall that $E_{s,g}[m]$ is defined by
\begin{equation}\nonumber
	\inf\left\{ \capE_{s,g}(E) \mid \text{$E \subset \mathR^N$: measurable, $|E|=m$} \right\}.
\end{equation}
for any $m>0$.
\begin{lemma}[Sub-additivity of $E_{s,g}$]\label{lemmaSubadditivityEnergy}
	Let $m>0$ be any number. Assume that the kernel $g:\mathR^N \setminus \{0\} \to \mathR$ satisfies $(\mathrm{g}1)$, $(\mathrm{g}2)$, and $(\mathrm{g}4)$. Then, for any $m_1 \in (0,\,m]$, it holds
	\begin{equation}\nonumber
		E_{s,g}[m] \leq E_{s,g}[m_1] + E_{s,g}[m - m_1].
	\end{equation}  
\end{lemma}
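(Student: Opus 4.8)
The plan is to take near-optimal competitors $E_1$ and $E_2$ for $E_{s,g}[m_1]$ and $E_{s,g}[m-m_1]$ respectively, and to build a competitor for $E_{s,g}[m]$ by placing a (slightly rescaled) copy of $E_1$ far away from $E_2$, so that the cross-interaction terms in both $P_s$ and $V_g$ become negligible. Concretely, fix $\varepsilon>0$ and pick $E_1$ with $|E_1|=m_1$, $\mathcal{E}_{s,g}(E_1)\le E_{s,g}[m_1]+\varepsilon$, and similarly $E_2$ with $|E_2|=m-m_1$, $\mathcal{E}_{s,g}(E_2)\le E_{s,g}[m-m_1]+\varepsilon$. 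By Lemma \ref{lemmaBoundednessMinimizers} we may further assume (replacing $E_1,E_2$ by bounded near-minimizers, or simply intersecting with a large ball and rescaling — the cost is controlled since $P_s$ and $V_g$ of the intersection are bounded by those of the whole set via Proposition \ref{propositionIntersectionConvexSmaller}) that both $E_1$ and $E_2$ are bounded. Then for $v\in\mathR^N$ with $|v|$ large set $E_v\coloneqq E_1\cup(E_2+v)$; since $E_1$ and $E_2+v$ are disjoint for $|v|$ large, $|E_v|=m$, and $E_v$ is an admissible competitor.

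The next step is the decomposition of the energy on this competitor. One has
\begin{equation}\nonumber
	P_s(E_v) = P_s(E_1) + P_s(E_2) - 2\int_{E_1}\int_{E_2+v}\frac{dx\,dy}{|x-y|^{N+s}},
\end{equation}
and since $E_1,E_2$ are bounded, $\mathrm{dist}(E_1,E_2+v)\to\infty$ as $|v|\to\infty$, so the cross term is $O(|v|^{-(N+s)}|E_1|\,|E_2|)\to 0$. Similarly,
\begin{equation}\nonumber
	V_g(E_v) = V_g(E_1) + V_g(E_2) + 2\int_{E_1}\int_{E_2+v} g(x-y)\,dx\,dy,
\end{equation}
and here assumption $(\mathrm{g}4)$ enters: $g$ is bounded on $\{|x|\ge R\}$ by $\sup_{|x|\ge R} g(x)\to 0$, so the cross term is at most $|E_1|\,|E_2|\,\sup_{|z|\ge \mathrm{dist}(E_1,E_2+v)} g(z)\to 0$ as $|v|\to\infty$. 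Hence $\mathcal{E}_{s,g}(E_v)\to\mathcal{E}_{s,g}(E_1)+\mathcal{E}_{s,g}(E_2)$ as $|v|\to\infty$, and therefore for $|v|$ large enough
\begin{equation}\nonumber
	E_{s,g}[m]\le \mathcal{E}_{s,g}(E_v) \le \mathcal{E}_{s,g}(E_1)+\mathcal{E}_{s,g}(E_2)+\varepsilon \le E_{s,g}[m_1]+E_{s,g}[m-m_1]+3\varepsilon.
\end{equation}
Letting $\varepsilon\to 0$ gives the claim.

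The only genuinely delicate point is the reduction to bounded competitors: $E_{s,g}[m_1]$ is an infimum, so its near-minimizers need not be bounded a priori, and we cannot simply invoke Lemma \ref{lemmaBoundednessMinimizers} (which is about actual minimizers, whose existence is not yet available at this stage). I would handle this by a truncation argument: given a near-minimizer $E_1$, intersect with $B_R(0)$ for $R$ large, note $|E_1\cap B_R|\to|E_1|$ and (by Proposition \ref{propositionIntersectionConvexSmaller} and monotone convergence for $V_g$) $\mathcal{E}_{s,g}(E_1\cap B_R)\le\mathcal{E}_{s,g}(E_1)$, then rescale $E_1\cap B_R$ by a factor tending to $1$ to restore the exact volume $m_1$; the rescaling cost is controlled by the Scaling Lemma \ref{lemmaScalingEnergy}, so the resulting set is a bounded competitor with energy at most $E_{s,g}[m_1]+2\varepsilon$. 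Everything else is a routine estimate on the two vanishing cross-interaction integrals.
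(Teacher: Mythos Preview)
Your proof is correct and follows essentially the same approach as the paper's: pick bounded near-minimizers for the two sub-problems, translate one far away, and use $(\mathrm{g}4)$ to kill the $V_g$ cross-interaction (the $P_s$ cross term is already nonpositive). Your reduction to bounded competitors via Proposition~\ref{propositionIntersectionConvexSmaller} together with the Scaling Lemma is in fact a bit more direct than the paper's version, which reaches the same conclusion through a longer detour involving the decomposition identity for $P_s$ and a specially chosen sequence $R_i\to\infty$.
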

\begin{proof}
	The idea is in the same spirit as the one shown in \cite[Lemma 3]{LuOt} (see also \cite{Onoue}).
	
	Let $m > 0$ be any constant and we take any $m_1 \in (0,\,m)$. By definition, for any $\eta>0$, there exist measurable sets $E_1,\,E_2 \subset \mathbb{R}^N$ with the volume constraints $|E_1|=m_1$ and $|E_2|=m - m_1$ such that
	\begin{equation}\label{minimizingSeqInequality}
		\capE_{s,g}(E_1) + \capE_{s,g}(E_2) \leq E_{s,g}[m_1]+ E_{s,g}[m_2] + \eta.
	\end{equation}
	Now we may assume that $E_1$ and $E_2$ are bounded. Indeed, we can observe that the minimum of $\capE_{s,g}$ among unbounded sets of volume $m$ is not smaller than the minimum of $\capE_{s,g}$ among bounded sets of volume $m$. To see this, for any unbounded set $E$ with $|E|=m$, we can choose sufficiently large $R>1$ in such a way that $|E \setminus B_R(0)|$ is as small as possible. Then, setting $\widehat{E} \coloneqq \lambda(R)\,(E \cap B_R(0))$ where $\lambda(R)^N \coloneqq \frac{m}{m - |E \setminus B_R(0)|} \geq 1$, we obtain, from Lemma \ref{lemmaScalingEnergy}, that
	\begin{equation}\nonumber
		|\widehat{E}| = \lambda(R)^N\,(m - |E \setminus B_R(0)|) = m
	\end{equation}
	and 
	\begin{align}\label{keyEstiEnergyModifiedSet}
		\capE_{s,g}(\widehat{E}) &\leq \lambda(R)^{2N} \capE_{s,g}(E \cap B_R(0)) \nonumber\\
		&\leq \lambda(R)^{2N} \capE_{s,g}(E) - P_s(E \setminus B_R(0)) + 2\int_{E \cap B_R(0)}\int_{E \setminus B_R(0)}\frac{dx\,dy}{|x-y|^{N+s}}.
	\end{align}
	Here we have used the following identity of the nonlocal perimeter:
	\begin{equation}\nonumber
		P_s(E \cup F) = P_s(E) + P_s(F) - 2\int_{E}\int_{F}\frac{1}{|x-y|^{N+s}}\,dx\,dy \nonumber
	\end{equation}
	for any measurable sets $E,\,F \subset \mathR^N$. From the isoperimetric inequality and the computation in \eqref{estimateMinimality} in Lemma \ref{lemmaBoundednessMinimizers}, we have that
	\begin{equation}\label{keyEstiEnergyModifiedSet02}
		\capE_{s,g}(\widehat{E}) \leq \lambda(R)^{2N} \capE_{s,g}(E) - C_1\,|E \setminus B_R(0)|^{\frac{N-s}{N}} + C_2\, \int_{R}^{\infty}\frac{\capH^{N-1}(E \cap \partial B_{\sigma}(0))}{(\sigma - R)^s}\,d\sigma
	\end{equation}
	where we set $C_1 \coloneqq P_s(B_1)\,|B_1|^{-\frac{N-s}{N}}$ and $C_2 \coloneqq 2s^{-1}|\partial B_1|$. Since $E$ is unbounded, we have that the function $R \mapsto |E \setminus B_R(0)|$ is non-increasing and not equal to zero for any $R>0$. Thus, by applying the same argument in Lemma \ref{lemmaBoundednessMinimizers}, we can find that there exists a sequence $\{R_i\}_{i\in\mathN}$ such that $R_i \to \infty$ as $i \to \infty$ and 
	\begin{equation}\label{negativityRemainderTerm}
		- C_1\,|E \setminus B_{R_i}(0)|^{\frac{N-s}{N}} + C_2\, \int_{R_i}^{\infty}\frac{\capH^{N-1}(E \cap \partial B_{\sigma}(0))}{(\sigma - R_i)^s}\,d\sigma < 0
	\end{equation}
	for any $i\in\mathN$. Hence, from \eqref{keyEstiEnergyModifiedSet02} and \eqref{negativityRemainderTerm}, it follows that
	\begin{equation}\nonumber
		\inf\{\capE_{s,g}(E) \mid \text{$E$: measurable \& bounded, $|E|=m$}\} \leq  \capE_{s,g}(\widehat{E}) 
		< \lambda(R_i)^{2N} \capE_{s,g}(E)
	\end{equation} 
	for any $i\in\mathN$. From the fact that $\lambda(R_i) \to 1$ as $i \to \infty$, the arbitrariness of $E$ and by letting $i \to \infty$, we finally obtain that
	\begin{equation}\nonumber
		\inf\{\capE_{s,g}(E) \mid \text{$E$: bounded, $|E|=m$}\} \leq \inf\{\capE_{s,g}(E) \mid \text{$E$: unbounded, $|E|=m$}\},
	\end{equation}
	as we desired. 
	
	Now we focus on the case that both $E_1$ and $E_2$ are bounded. Since $E_1,\,E_2$ are bounded, we can find a vector $e \in \mathS^{N-1}$ such that it follows that
	\begin{equation}\nonumber
		\dist(E_1,\,(E_2 + d\,e)) \xrightarrow[d \to \infty]{} \infty.
	\end{equation}
	Then we may compute the energy as follows:
	\begin{align}
		\capE_{s,g}(E_1\cup(E_2+d\,e)) &= P_s(E_1\cup(E_2+d\,e)) + V_g(E_1\cup(E_2+d\,e)) \nonumber\\
		&\leq P_s(E_1) + P_s(E_2+d\,e) \nonumber\\
		&\qquad + V_g(E_1) + V_g(E_2+d\,e) + 2\int_{E_1}\int_{E_2+d\,e} g(x-y)\,dx\,dy \nonumber\\
		&\leq \capE_{s,g}(E_1) +\capE_{s,g}(E_2) + 2\int_{E_1}\int_{E_2+d\,e} g(x-y)\,dx\,dy. \nonumber
	\end{align}
	Here we have used the translation invariance of $P_s$ and $V_g$. From assumption $(\mathrm{g}4)$, which says that $g$ vanishes at infinity, we can show that
	\begin{equation}\nonumber
		\int_{E_1}\int_{E_2+d\,e} g(x-y)\,dx\,dy \xrightarrow[d \to \infty]{} 0.
	\end{equation}
	Since $|E_1 \sqcup (E_2 + d\,e)| = |E_1| + |E_2| = m$ for sufficiently large $d>0$ and from \eqref{minimizingSeqInequality}, we obtain
	\begin{equation}\nonumber
		E_{s,g}[m_1 + m - m_1] \leq E_{s,g}[m_1]+ E_{s,g}[m - m_1] + \eta + o(1).
	\end{equation}
	Letting $d \to \infty$ and then $\eta \to 0$, we conclude that the lemma is valid. 
\end{proof}

Next we prove the uniform density estimate of minimizers of $\capE_{s,g}$ for large $m>0$. 
\begin{lemma}[Uniform density estimate]\label{lemmaUniformDensity}
	Let $m \geq 1$. We assume that the kernel $g :\mathR^N \setminus \{0\} \to \mathR$ is integrable in $\mathR^N$ and satisfies the assumptions $(\mathrm{g}1)$ and $(\mathrm{g}2)$. Then there exist constants $c_0>0$ and $r_0>0$, depending only on $N$, $s$, and $g$, such that, if $E$ is a minimizer of $\capE_{s,g}$ with $|E| = m$, then it holds that
	\begin{equation}\label{uniformDensityEstimate}
		|E \cap B_r(x)| \geq c_0\, r^{N}
	\end{equation}
	for any $r\in(0,\,r_0]$ and $x\in\mathR^N$ with $|E \cap B_r(x)|>0$ for any $r>0$.
\end{lemma}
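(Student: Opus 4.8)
The plan is to run the De Giorgi--type comparison argument for lower density estimates, adapted to the fractional perimeter $P_s$, in the same spirit as the proof of Lemma~\ref{lemmaBoundednessMinimizers} and of \cite[Lemma~4.1]{dCNRV}, \cite[Proposition~3.2]{CeNo}. Fix a point $x\in\mathR^N$ for which $u(r)\coloneqq|E\cap B_r(x)|>0$ for all $r>0$; the function $u$ is continuous, non-decreasing, $u(0^+)=0$, $u(r)\le|B_1|\,r^{N}$, and $u'(r)=\capH^{N-1}(E\cap\partial B_r(x))$ for a.e.\ $r>0$ by the coarea formula. For any $r$ with $u(r)<m$ I would use as a competitor the rescaled set $\lambda_r\,(E\setminus B_r(x))$, with $\lambda_r^{N}\coloneqq m\,(m-u(r))^{-1}\ge 1$, so that $|\lambda_r(E\setminus B_r(x))|=m$; note that $u(r)<m$ holds for all small $r$ since $u(r)\to0$.

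First, using the minimality of $E$, Lemma~\ref{lemmaScalingEnergy}, the identity $P_s(E)=P_s(E\cap B_r(x))+P_s(E\setminus B_r(x))-2L_r$ with
\begin{equation}\nonumber
	L_r\coloneqq\int_{E\cap B_r(x)}\int_{E\setminus B_r(x)}\frac{dz\,dw}{|z-w|^{N+s}}\ge 0,
\end{equation}
and the elementary bounds $V_g(E\setminus B_r(x))\le V_g(E)$, $V_g(E\cap B_r(x))\ge0$, I would rearrange the resulting inequality into
\begin{equation}\nonumber
	P_s(E\cap B_r(x))\le 2L_r+(\lambda_r^{2N}-1)\,E_{s,g}[m].
\end{equation}
When $u(r)\le m/2$ one has $\lambda_r^{2N}-1\le C\,u(r)\,m^{-1}$ for a purely dimensional $C$, while $E_{s,g}[m]\le\capE_{s,g}(B_R)\le C_1(N,s,g)\,m$, where $B_R$ is the ball with $|B_R|=m$ and $C_1<\infty$ precisely because $g\in L^1(\mathR^N)$. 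The factors $m$ cancel --- and this is exactly what makes the final constants independent of $m\ge1$ --- so that $P_s(E\cap B_r(x))\le 2L_r+C\,C_1\,u(r)$.

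The heart of the matter is then the estimate of the interaction term. Since $E\cap B_r(x)\subset B_r(x)$ and $B_r(x)^c\subset B_{r-|z-x|}(z)^c$ for $z\in B_r(x)$, a radial computation followed by the coarea formula gives
\begin{equation}\nonumber
	L_r\le\int_{E\cap B_r(x)}\int_{B_r(x)^c}\frac{dw\,dz}{|z-w|^{N+s}}\le\frac{|\partial B_1|}{s}\int_{E\cap B_r(x)}\frac{dz}{(r-|z-x|)^s}=\frac{|\partial B_1|}{s}\int_0^r\frac{u'(\rho)}{(r-\rho)^s}\,d\rho .
\end{equation}
Combining this with the fractional isoperimetric inequality $P_s(E\cap B_r(x))\ge c_{N,s}\,u(r)^{(N-s)/N}$, where $c_{N,s}=P_s(B_1)|B_1|^{-(N-s)/N}$, and with $u(r)^{s/N}\le|B_1|^{s/N}r^s$ --- which lets me absorb $C\,C_1\,u(r)$ into the left-hand side for $r\le r_1$, with $r_1=r_1(N,s,g)$ --- I arrive at a closed inequality
\begin{equation}\nonumber
	c_{N,s}\,u(r)^{\frac{N-s}{N}}\le C'(N,s)\int_0^r\frac{u'(\rho)}{(r-\rho)^s}\,d\rho ,\qquad 0<r\le r_1 .
\end{equation}

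From here the argument is one-dimensional. Integrating over $(0,R)$, exchanging the order of integration and integrating by parts (using $u(0^+)=0$) yields $c_{N,s}\int_0^R u(r)^{(N-s)/N}\,dr\le C'(N,s)\int_0^R u(\rho)(R-\rho)^{-s}\,d\rho$; writing $h(R)\coloneqq\int_0^R u(r)^{(N-s)/N}\,dr$ and using $u(\rho)\le u(R)=h'(R)^{N/(N-s)}$ reduces this to the differential inequality $h'(R)\ge c\,R^{-(1-s)(N-s)/N}\,h(R)^{(N-s)/N}$, which integrates to $h(R)\ge c\,R^{N+1-s}$, hence $u(R)^{(N-s)/N}\ge h(R)/R\ge c\,R^{N-s}$, i.e.\ $u(R)\ge c_0\,R^N$; this is the same type of one-dimensional step already used in Lemma~\ref{lemmaBoundednessMinimizers}. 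To cover every radius, set $r_0\coloneqq\min\{r_1,1\}$ and $\bar r\coloneqq\sup\{r>0:u(r)\le m/2\}$: the chain above is licit on $(0,\min\{\bar r,r_0\}]$, so if $\bar r\ge r_0$ we are done, while if $\bar r<r_0$ then for $r\in[\bar r,r_0]$ one trivially has $u(r)\ge u(\bar r)=m/2\ge\tfrac12\ge c_0 r^N$ after shrinking $c_0$ so that $c_0 r_0^N\le\tfrac12$. I expect the genuine difficulty to be the honest bound on $L_r$ together with tracking the cancellation of $m$, so that $c_0$ and $r_0$ truly depend only on $N$, $s$ and $g$; the concluding singular-integral/ODE step is routine and essentially identical to the one already performed in the boundedness lemma.
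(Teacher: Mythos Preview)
Your proposal is correct and follows essentially the same route as the paper: the same competitor $\lambda_r(E\setminus B_r(x))$, the same decomposition of $P_s$, the same radial bound on $L_r$, and the same key observation that $m^{-1}E_{s,g}[m]$ is bounded independently of $m\ge 1$ thanks to $g\in L^1(\mathR^N)$, so that the constants do not depend on $m$. The only cosmetic difference is in the concluding one-dimensional step: the paper reaches the integrated inequality $C'\int_0^{r'}\phi^{(N-s)/N}\,dr\le (1-s)^{-1}(r')^{1-s}\phi(r')$ and then closes by a contradiction argument citing \cite[Lemma~3.1]{FFMMM}, whereas you integrate the corresponding ODE for $h(R)=\int_0^R u^{(N-s)/N}\,dr$ directly; both are standard and equivalent.
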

\begin{remark}
	Notice that, directly from Lemma \ref{lemmaUniformDensity}, we fail to obtain the uniform density estimates of minimizers of $\widehat{\capE}_{s,g}$. Indeed, when we use the dilation $\lambda \mapsto \lambda \,E$ of a minimizer $E$ in \eqref{uniformDensityEstimate} in such a way that $|\lambda \, E| = |B_1|$, the constant $r_0$ in Lemma \ref{lemmaUniformDensity} vanishes as $\lambda  \to \infty$.  
\end{remark}
\begin{proof}
	Let $E$ be a minimizer of $\capE_{s,g}$ with $|E|=m$ and $x_0 \in E$ be any point such that $|E \cap B_r(x_0)| > 0$ for any $r>0$. We set $\lambda_r^N \coloneqq \frac{m}{m - |E \cap B_r(x_0)|} \geq 1$ for any $r>0$. We may assume that $|E \cap B_r(x_0)| < m$ for any $0<r<1$. Then, from the minimality of $E$, we have the inequality 
	\begin{equation}\nonumber
		\capE_{s,g}(E) \leq \capE_{s,g}\left(\lambda_r (E \setminus B_r(x_0)) \right)
	\end{equation}
	for any $r >0$. We now recall the following identity on the nonlocal perimeter:
	\begin{equation}\label{decompositionNonlocalPeri}
		P_s(E \cup F) = P_s(E) + P_s(F) - 2\int_{E}\int_{F}\frac{1}{|x-y|^{N+s}}\,dx\,dy
	\end{equation}
	for any measurable sets $E,\,F\subset \mathR^N$ with $E \cap F = \emptyset$. Then, from Lemma \ref{lemmaScalingEnergy}, \eqref{decompositionNonlocalPeri}, and the fact that $V_g(E \setminus B_r(x_0)) \leq V_g(E)$, we have that
	\begin{align}\label{keyDensityEstimate}
		\capE_{s,g}(E) &\leq \capE_{s,g}(E \setminus B_r(x_0)) + (\lambda_r^{2N}-1)\,\capE_{s,g}(E \setminus B_r(x_0)) \nonumber\\
		&\leq \capE_{s,g}(E \setminus B_r(x_0)) + (\lambda_r^{2N}-1)\,\Big(\capE_{s,g}(E) - P_s(E \cap B_r(x_0))  \nonumber\\
		&\qquad \left. + \int_{E \cap B_r(x_0)}\int_{E \setminus B_r(x_0)} \frac{2\,dx\,dy}{|x-y|^{N+s}} \right) 
	\end{align}
	for $0<r<1$. Similarly to \eqref{decompositionNonlocalPeri}, we also have the following identity on the Riesz potential:
	\begin{equation}\label{decompositionRiesz}
		V_g(E \cup F) = V_g(E) + V_g(F) + 2\int_{E}\int_{F}g(x-y)\,dx\,dy
	\end{equation}
	for any measurable sets $E,\,F\subset \mathR^N$ with $E \cap F = \emptyset$. Thus, from \eqref{decompositionNonlocalPeri}, \eqref{keyDensityEstimate}, and \eqref{decompositionRiesz}, we further have that 
	\begin{align}\label{keyDensityEstimate02}
		\lambda_r^{2N} \, P_s(E \cap B_r(x_0)) 
		&\leq \lambda_r^{2N} \, \int_{E \cap B_r(x_0)}\int_{E \setminus B_r(x_0)} \frac{2\,dx\,dy}{|x-y|^{N+s}} + (\lambda_r^{2N} - 1)\,E_{s,g}[m]
	\end{align}
	for any $0 < r < 1$. Recalling the definition of $\lambda_r$, we have that
	\begin{equation}\label{defLambda}
		\lambda_r^{2N} = \frac{m^2}{(m - |E \cap B_r|)^2}, \quad \lambda_r^{2N} - 1 = \frac{|E \cap B_r|}{m - |E \cap B_r|} \left( 2 +  \frac{|E \cap B_r|}{m - |E \cap B_r|} \right)
	\end{equation}
	for any $0 < r < 1$. From \eqref{keyDensityEstimate02} and \eqref{defLambda}, we finally obtain
	\begin{equation}\nonumber
		P_s(E \cap B_r(x_0)) 
		\leq \int_{E \cap B_r(x_0)}\int_{E \setminus B_r(x_0)} \frac{2\,dx\,dy}{|x-y|^{N+s}} + \frac{2E_{s,g}[m]}{m} |E \cap B_r(x_0)|
	\end{equation}
	for any $0 < r <1$. Hence, from the nonlocal isoperimetric inequality, we have that
	\begin{align}\label{densityEstimate01}
		&\frac{P_s(B_1)}{|B_1|^{\frac{N-s}{N}}} |E
		\cap B_r(x_0)|^{\frac{N-s}{N}} \nonumber\\
		&\leq P_s(E \cap B_r(x_0)) + V_g(E \cap B_r(x_0)) + 2\int_{E \cap B_r(x_0)}\int_{E \setminus B_r(x_0)}g(x-y)\,dx\,dy \nonumber\\
		&\leq 2 \int_{E \cap B_r(x_0)}\int_{E \setminus B_r(x_0)} \frac{1}{|x-y|^{N+s}}\,dx\,dy + \frac{2 \, E_{s,g}[m]}{m}|E \cap B_r(x_0)| 
	\end{align}
	for any small $r>0$. Noticing that $E \setminus B_r(x_0) \subset 
	B^c_{r-|y-x_0|}(y)$ for any $y \in E\cap B_r(x_0)$ and from the co-area formula, we have the following estimate:
	\begin{align}\label{densityEstimate02}
		\int_{E \cap B_r(x_0)}\int_{E \setminus B_r(x_0)} \frac{1}{|x-y|^{N+s}}\,dx\,dy  &\leq \int_{E \cap B_r(x_0)}\int_{B^c_{r-|y-x_0|}(y)} \frac{1}{|x-y|^{N+s}}\,dx\,dy \nonumber\\
		&\leq \frac{|\partial B_1|}{s}\int_{E \cap B_r(x_0)} \frac{1}{(r-|y-x_0|)^s}\,dy \nonumber\\
		&= \frac{|\partial B_1|}{s} \int_{0}^{r}\frac{\capH^{N-1}(E \cap \partial B_{\sigma})}{(r-\sigma)^s}\,d\sigma.
	\end{align}
	Now we set $\phi(r) \coloneqq | E \cap B_r(x_0) |$ for any $r>0$ and we have that, for a.e. $r>0$, $\phi^{\prime}(r) = \capH^{N-1}(E \cap \partial B_{r})$. Thus we obtain from \eqref{densityEstimate01} and \eqref{densityEstimate02}, that
	\begin{align}\label{densityEstimate03}
		C(N,s)\,\phi(r)^{\frac{N-s}{N}} \leq \frac{2|\partial B_1|}{s} \int_{0}^{r}\frac{\phi^{\prime}(\sigma)}{(r-\sigma)^s}\,d\sigma + \frac{2\,E_{s,g}[m]}{m}\,\phi(r)
	\end{align}
	for any small $r>0$ where $C(N,s) \coloneqq |B_1|^{-\frac{N-s}{N}}\,P_s(B_1)$. Now we show that $m^{-1}E_{s,g}[m]$ is bounded by the constant independent of $m \geq 1$. Indeed, from the definition of $E_{s,g}[m]$ and by changing the variable $x \mapsto r_m\,x$, we first have that 
	\begin{align}\label{estimateMinimumUpperBound}
		E_{s,g}[m] \leq \capE_{s,g}(B_{r_m}) &= P_s(B_{r_m}) + V_g(B_{r_m}) \nonumber\\
		&\leq \left(\frac{m}{|B_1|}\right)^{\frac{N-s}{N}}P_s(B_1) + \left(\frac{m}{|B_1|}\right)^{2}\,2\int_{B_1}\int_{B_1}g(r_m(x-y))\,dx\,dy,
	\end{align}
	where $r_m>0$ is the constant satisfying $|B_{r_m}|=m$. Moreover, from the assumptions on $g$ and by changing the variable again, we have that
	\begin{align}\label{estimateMinimumUpperBound02}
		\int_{B_1}\int_{B_1}g(r_m(x-y))\,dx\,dy &\leq \int_{B_1(0)}\sup_{y\in\mathR^N}\int_{B_1(0)}g(r_m\,(x-y))\,dx \,dy \nonumber\\
		&= |B_1|\,r_m^{-N}\int_{B_{r_m}(0)}g(x)\,dx \leq |B_1|^2\|g\|_{L^1(\mathR^N)}\,m^{-1} .
	\end{align}
	Thus, from \eqref{estimateMinimumUpperBound}, \eqref{estimateMinimumUpperBound02} and the assumption that $m \geq 1$, we obtain
	\begin{equation}\nonumber
		m^{-1}E_{s,g}[m] \leq \frac{P_s(B_1)}{|B_1|^{\frac{N-s}{N}}}m^{-\frac{s}{N}} + 2\|g\|_{L^1(\mathR^N)} \leq \frac{P_s(B_1)}{|B_1|^{\frac{N-s}{N}}} + 2\|g\|_{L^1(\mathR^N)} \eqqcolon \tilde{C}(N,s,g)
	\end{equation}
	and this completes the proof of the claim. Since $\phi$ is non-decreasing and $\phi(r) \leq |B_1|\,r^N$ for any $r>0$, we have that 
	\begin{equation}\nonumber
		4\,\tilde{C}(N,s,g)\,\phi(r) \leq C(N,s)\,\phi(r)^{\frac{N-s}{N}}, \quad r_0 \coloneqq \left(\frac{P_s(B_1)}{4\tilde{C}(N,s,g)|B_1|}\right)^{\frac{1}{s}}
	\end{equation}
	for any $r \in (0,\,r_0]$. notice that $r_0$ is independent of $m$. Then integrating the both side of \eqref{densityEstimate03} over $r \in [0,\,r']$ for any $r' \in (0,\,r_0]$, we obtain that
	\begin{equation}\nonumber
		\frac{C(N,s)}{2}\,\int_{0}^{r'}\phi(r)^{\frac{N-s}{N}}\,dr \leq \frac{2|\partial B_1|}{s} \int_{0}^{r'}\int_{0}^{r}\frac{\phi^{\prime}(\sigma)}{(r-\sigma)^s}\,d\sigma \,dr.
	\end{equation}
	By changing the order of the integral, we have that
	\begin{align}
		C'(N,s)\,\int_{0}^{r'}\phi(r)^{\frac{N-s}{N}}\,dr &\leq \int_{0}^{r'}\int_{\sigma}^{r'}\frac{\phi^{\prime}(\sigma)}{(r-\sigma)^s} \,dr\,d\sigma \nonumber\\
		&= \frac{1}{1-s}\int_{0}^{r'}\phi^{\prime}(r)\,(r'-r)^{1-s}\,dr \nonumber\\
		&\leq \frac{(r')^{1-s}}{1-s}\phi(r') \nonumber
	\end{align}
	for any $r' \in (0,\,r_0]$ where we set $C'(N,s) \coloneqq 4^{-1}\,|\partial B_1|^{-1}\,s\,C(N,s)$. Now in order to prove the uniform density estimate, we suppose by contradiction that there exists a constant $r_1 \in (0,\,r_0]$ such that 
	\begin{equation}\nonumber
		|E \cap B_{r_1}(x_0)| \leq c_0^{-\frac{N}{s}}\,r_1^N, \quad c_0 \coloneqq \frac{s(1-s)\,P_s(B_1)}{16|\partial B_1|\,|B_1|^{\frac{N-s}{N}}}.
	\end{equation}
	Then by applying the same argument in \cite[Lemma 3.1]{FFMMM}, we can obtain $|E \cap B_{\frac{r_1}{2}}(x_0)| = 0$, which is a contradiction to the choice of $x_0$. Notice that the constants $c_0$ and $r_0$ are independent of $E$, $x_0$, and $r$. 
\end{proof}

\section{Existence of minimizers for $\capE_{s,g}$ for any volumes}\label{sectionExisMiniFastDecayGAnyVol}
In this section, we prove Theorem \ref{theoremExistMiniAnyVolumeFasterDecay}, namely, the existence of minimizers of the functional $\capE_{s,g}$ for any volume $m>0$ under the assumption that the kernel $g$ of the Riesz potential decays faster than the kernel of the fractional perimeter $P_s$, namely, $x \mapsto |x|^{-(N+s)}$. 

Before proving Theorem \ref{theoremExistMiniAnyVolumeFasterDecay}, we show an auxiliary lemma, which states the existence of minimizers in a given bounded set of $\capE_{s,g}$ with volume $m>0$.
\begin{lemma}\label{lemmaExistMiniInBoundedSet}
	Let $m>0$. Assume that the kernel $g: \mathR^N \setminus \{0\} \to \mathR$ is in $L^1_{loc}(\mathR^N)$. Then, for any $R \geq m^{\frac{1}{N}}$, there exists a minimizer $E^R$ of $\capE_{s,g}$ with $|E|=m$ such that $E^R \subset Q_R$ where $Q_R \subset \mathR^N$ is the cube of size $R$.
\end{lemma}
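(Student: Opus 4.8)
The plan is to run the direct method of the calculus of variations on the constrained problem $\inf\{\capE_{s,g}(E)\mid E\subset Q_R,\ |E|=m\}$. As a first step I would check that this class is non-empty and the infimum is finite: since $R\ge m^{1/N}$, a sub-cube $Q\subset Q_R$ of side length $m^{1/N}$ satisfies $|Q|=m$; moreover $P_s(Q)<\infty$ because $Q$ is a bounded Lipschitz set, and $V_g(Q)\le |Q|\int_{Q-Q}g(z)\,dz<\infty$ because $g\in L^1_{loc}(\mathR^N)$ and $Q-Q$ is bounded. Hence $0\le\inf\le\capE_{s,g}(Q)<\infty$.

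Next, I would take a minimizing sequence $\{E_j\}_{j\in\mathN}$ with $E_j\subset Q_R$ and $|E_j|=m$. Since $P_s$ and $V_g$ are non-negative, $\sup_j P_s(E_j)<\infty$. Because every $E_j$ is contained in the fixed cube $Q_R$, this global bound controls the Gagliardo seminorm of $\chi_{E_j}$ on a slightly larger cube $Q_{2R}$ (indeed $[\chi_{E_j}]_{W^{s,1}(Q_{2R})}\le 2\,P_s(E_j)$), so by the compact embedding $W^{s,1}(Q_{2R})\hookrightarrow\hookrightarrow L^1(Q_{2R})$ on the bounded Lipschitz domain $Q_{2R}$ — equivalently, the $L^1$-compactness of families of sets of uniformly bounded fractional perimeter confined to a fixed bounded set — I extract a subsequence with $\chi_{E_j}\to\chi_E$ in $L^1(\mathR^N)$ and a.e., where $E\subset Q_R$ up to a null set and $|E|=\lim_j|E_j|=m$.

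Then I would pass to the limit by lower semicontinuity. Along a further a.e.-convergent subsequence, the integrands defining $P_s(E_j)$ and $V_g(E_j)$ are non-negative and converge pointwise a.e. (using $\chi_{E_j}(x)\chi_{E_j^c}(y)\to\chi_E(x)\chi_{E^c}(y)$ and $\chi_{E_j}(x)\chi_{E_j}(y)\to\chi_E(x)\chi_E(y)$), so Fatou's lemma gives $P_s(E)\le\liminf_j P_s(E_j)$ and $V_g(E)\le\liminf_j V_g(E_j)$, the latter also using $g\ge 0$ from $(\mathrm{g}1)$. Summing, $\capE_{s,g}(E)\le\liminf_j\capE_{s,g}(E_j)=\inf\{\capE_{s,g}(F)\mid F\subset Q_R,\ |F|=m\}$, so $E^R:=E$ is the desired minimizer.

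The only genuinely delicate step is the compactness one: one must ensure that the a priori estimate, which bounds the \emph{global} quantity $P_s(E_j)=\int_{E_j}\int_{E_j^c}|x-y|^{-N-s}\,dx\,dy$, yields a bound on a \emph{localized} fractional seminorm on a bounded domain where a Rellich-type compact embedding is available, and that the ensuing $L^1_{loc}$ convergence upgrades to $L^1(\mathR^N)$. Both are immediate here precisely because the competitors are confined to the fixed cube $Q_R$; the remaining ingredients (finiteness of the infimum and lower semicontinuity via Fatou) are routine.
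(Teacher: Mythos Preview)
Your proposal is correct and follows essentially the same route as the paper's proof: the direct method, with compactness supplied by the embedding $W^{s,1}\hookrightarrow L^1$ on a bounded domain containing $Q_R$, and lower semicontinuity of both $P_s$ and $V_g$ via Fatou's lemma. Your version is slightly more explicit (you verify non-emptiness and finiteness of the infimum, and spell out why the global $P_s$-bound localizes), but there is no substantive difference in method.
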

\begin{remark}
	We can observe that, if $E_n$ is a minimizer of $\capE_{s,g}$ among subsets of $Q_n$ with $|E|=m$ for every $n$ and $\chi_{E_n} \to \chi_{E_{\infty}}$ as $n\to\infty$ in $L^1_{loc}$ with $|E_{\infty}|=m$, then it follows that $E_{\infty}$ is a minimizer of $\capE_{s,g}$ among sets of volume $m$. Indeed, we take any $F \subset \mathR^N$ with $|F|=m$ and set $\mu_n(F) \coloneqq \frac{m}{m-|F \setminus Q_{n-1}|} \geq 1$ for any large $n\in\mathN$. Then, we can readily see that
	\begin{equation}\nonumber
		\mu_n(F) \xrightarrow[n\to\infty]{} 1, \quad |\mu_n(F)\,(F \cap Q_{n-1})| = m, \quad \mu_n(F)\,(F \cap Q_{n-1}) \subset Q_n \quad 
		\text{for large $n\in\mathN$}
	\end{equation}
	Thus, by choosing the set $\mu_n(F)\,(F \cap Q_{n-1})$ as a competitor against the minimizer $E_n$ and from Lemma \ref{lemmaScalingEnergy} and Proposition \ref{propositionIntersectionConvexSmaller}, we have
	\begin{equation}\nonumber
		\capE_{s,g}(E_n) \leq \mu_n(F)^{2N}\capE_{s,g}(F \cap Q_{n-1}) \leq \mu_n(F)^{2N} \capE_{s,g}(F).
	\end{equation}
	Hence, from the lower semi-continuity, we obtain that the claim is valid.
\end{remark}
\begin{proof}
	The proof is followed by the direct method of the calculus of variations and the compact embedding $W^{s,1} \hookrightarrow L^1$ for $s \in (0,\,1)$ in a bounded set (see \cite{dNPV} for the compactness). Let $m>0$ and $R \geq m^{\frac{1}{N}}$ and let $\{E^R_n\}_{n\in\mathN}$ be a minimizing sequence of $\capE_{s,g}$ with $|E^R_n|=m$ and $E^R_n \subset Q_R$. Then, by definition, we have 
	\begin{equation}\nonumber
		\sup_{n\in\mathN} P_s(E^R_n; Q_R) = \sup_{n\in\mathN} P_s(E^R_n; Q_R) \leq \inf\left\{ \capE_{s,g}(E) \mid \text{$E \subset Q_R$, $|E|=m$} \right\} < \infty
	\end{equation}
	and thus, from the compactness and the uniformly boundedness of $\{E^R_n\}_{n\in\mathN}$, we can choose a set $G^R \subset Q_R$ such that, up to extracting a subsequence, $\chi_{E^R_n} \to \chi_{G^R}$ in $L^1(Q_R)$. Moreover, we obtain that $|G^R| = \lim_{n \to \infty}|E^R_n| = m$ and thus, $G^R$ with $|G^R|=m$. Finally, from the lower semi-continuous of the fractional perimeter and Fatou's lemma, we obtain that 
	\begin{equation}\nonumber
		\capE_{s,g}(G^R) \leq \liminf_{n\to\infty} P_s(E^R_n) + \liminf_{n\to\infty} V_g(E^R) = \inf\left\{ \capE_{s,g}(E) \mid \text{$E \subset Q_R$, $|E|=m$} \right\}
	\end{equation}
	with $|G^R| = m$. This implies that $G^R$ is a minimizer.
\end{proof}

Now we are prepared to show Theorem \ref{theoremExistMiniAnyVolumeFasterDecay}. The idea to prove the existence is based on the argument by Di Castro, et al. in \cite{dCNRV} (see also \cite{GoNo, CeNo}). As we mentioned in the introduction, the idea was originally inspired by the so-called ``concentration-compactness" principle introduced by P.~L. Lions in \cite{Lions01, Lions02}. When one studies the variational problems in unbounded domain, the possible loss of compactness may occur from the vanishing or splitting into many pieces of minimizing sequences. 

Although the proof of this method may be technical, we briefly explain the strategy of it in the following; when we obtain the existence of minimizers of the minimization problems of isoperimetric type, we usually apply the direct method in the calculus of variations. More precisely, we first take any minimizing sequence; then we try to construct another sequence of the minimizing sequence in such a way that the new elements are uniformly bounded and the energy of the new sequence is smaller than that of the original sequence (one may often refer to this procedure as ``truncation"); thus, by some compactness, we can extract a convergent subsequence in proper topology; finally, we may conclude that, by lower semi-continuity, the limit of the subsequence should be a minimizer as desired. 

Unfortunately, in our problem, we might not be able to easily construct another sequence, which satisfies ``good" properties we want, from the original minimizing sequence. One possible reason is as follows; as is well-known, the fractional perimeter $P_s$ behaves like an attracting term, while the Riesz potential associated with the kernel $g$ could disaggregate minimizers into many different components. Moreover, in general, as the volume of a minimizer gets larger, the effect that separates minimizers into pieces from the Riesz potential may get stronger. However it is not obvious whether or not the nonlocal perimeter term can overcome such an effect from the Riesz potential because we cannot easily capture the precise behavior of a general kernel $g$. Therefore, we select the following strategy to handle the problems: first, taking any minimizing sequence $\{E_n\}_{n}$ of $\capE_{s,g}$ with $|E_n|=m>0$, we decompose each element $E_n$ into many pieces with the cubes $\{Q_n^i\}_{i}$ in such a way that each piece has non-negligible volumes. Then we ``properly" collect all the components $\{E_n \cap Q_n^i\}_{i}$ of $E_n$ such that $\dist(Q_n^i,\,Q_n^j) \to c^{ij} <\infty$ as $n\to\infty$ for $i \neq j$ (the case that $c^{ij}=\infty$ for $i \neq j$ is called the ``dichotomy" in the sense of Lions'). Thanks to the uniformly boundedness of $\{P_s(E_n)\}_{n}$ and the isoperimetric inequality of $P_s$, we can obtain a sequence of the limit sets $\{G^i\}_{i}$ of the components of $E_n$ that we have ``properly" collected such that $\{G^i\}_{i}$ is the collection with $c^{ij} = \infty$ for any $i \neq j$. Now we need to show that the amount of the volume of $\{G^i\}_{i}$ is equal to $m$ (this means that we exclude the ``vanishing phenomena" in the sense of Lions'). Once we have shown that $\sum_{i}|G^i|=m$, the faster decay of the kernel $g$ in Riesz potential enables us to prove that the only one element in $\{G_i\}_i$ should be the true minimizer of $\capE_{s,g}$ among sets of volume $m$. 

\begin{proof}[Proof of Theorem \ref{theoremExistMiniAnyVolumeFasterDecay}]
	Let $m>0$ be any number and let $\{E_n\}_{n\in\mathN}$ be a sequence of minimizers of $\capE_{s,g}$ with $|E_n|=m$ and $E_n \subset Q_n$. Notice that, as we see in Lemma \ref{lemmaExistMiniInBoundedSet}, the existence of the minimizer is guaranteed. We first decompose $Q_n$ into the unit cubes and denote by $\{Q_n^i\}_{i=1}^{I_n}$, where we choose a number $I_n \in \{1,\cdots,\,n^N\}$ in such a way that $|E_n \cap Q_n^i|>0$ for any $i \in \{1,\cdots,\,I_n\}$. We set $x_n^i \coloneqq |E_n \cap Q_n^i|$ and, since $E_n \subset Q_n$ for any $n\in\mathN$, we have that
	\begin{equation}\label{keyTechnical01}
		\sum_{i=1}^{I_n} x_n^i = |E_n| = m
	\end{equation} 
	for any $n\in\mathN$. Since $E_n$ is a minimizer with $|E_n|=m$ for any $n$, we can choose a ball with the volume $m$ as a competitor and then, from the local integrability of $g$, have
	\begin{equation}\label{uniformBoundednessNonlocPeri}
		\sup_{n\in\mathN}P_s(E_n) \leq P_s(B_m) + V_g(B_m) \leq \left(\frac{m}{|B_1|}\right)^{\frac{N-s}{N}}\,P_s(B_1) + m\,\|g\|_{L^1(2B_m)} < \infty
	\end{equation}
	where $B_m$ is the open ball with the volume $m$ for each $m>0$. From \eqref{uniformBoundednessNonlocPeri} and the isoperimetric inequality shown in \cite[Lemma 2.5]{dCNRV}, we obtain
	\begin{equation}\label{keyTechnical02}
		\sum_{i=1}^{I_n} (x_n^i)^{\frac{N-s}{N}} \leq C\sum_{i=1}^{I_n}P_s(E_n; Q_n^i) \leq 2CP_s(E_n) \leq C_1 < \infty
	\end{equation}
	for any $n\in\mathN$, where $C$ and $C_1$ are the positive constants independent of $n$. Up to reordering the cubes $\{Q_n^i\}_{i}$, we may assume that $\{x_n^i\}_{i}$ is a non-increasing sequence for any $n\in\mathN$. Thus, applying the technical result shown in \cite[Lemma 4.2]{GoNo} or \cite[Lemma 7.4]{dCNRV} with \eqref{keyTechnical01} and \eqref{keyTechnical02}, we obtain that
	\begin{equation}\label{keyTechnical03}
		\sum_{i=k+1}^{\infty} x_n^i \leq \frac{C_2}{k^{\frac{s}{N}}} 
	\end{equation}
	for any $k\in\mathN$, where we set $x_n^i \coloneqq 0$ for any $i > I_n$ and $C_2$ is the positive constant independent of $n$ and $k$. 
 
	Hence, by using the diagonal argument, we have that, up to extracting a subsequence, $x_n^i \to \alpha^i \in [0,\,m]$ as $n \to \infty$ for every $i\in\mathN$ and, from \eqref{keyTechnical01} and \eqref{keyTechnical03}, 
	\begin{equation}\label{identityLimitMeasu}
		\sum_{i=1}^{\infty} \alpha^i = m.
	\end{equation}
	Now we fix the centre of the cube $z_n^i \in Q_n^i$ for each $i$ and $n$. Up to extracting a further subsequence, we may assume that $|z_n^i - z_j^i| \to c^{ij} \in [0,\,\infty]$ as $n \to \infty$ for each $i,\,j \in \mathN$ and, since we have, from \eqref{uniformBoundednessNonlocPeri}, the uniform bound of the sequence $\{P_s(E_n-z_n^i)\}_{n\in\mathN}$ and its upper-bound is independent of $i$, there exists a measurable set $G^i \subset \mathR^N$ such that, up to a subsequence, 
	\begin{equation}\nonumber
		\chi_{E_n-z_n^i} \xrightarrow[n \to \infty]{} \chi_{G^i} \quad \text{in $L^1_{loc}$-topology}.
	\end{equation}
	We define the relation $i \sim j$ for every $i,\,j\in\mathN$ as $c^{ij} < \infty$ and we denote by $[i]$ the equivalent class of $i$. Moreover, we define the set of the equivalent class by $\capI$. Then, in the following, we show a sort of lower semi-continuity, More precisely,
	\begin{equation}\label{lowerSemicontiConcentration}
		\sum_{[i] \in \capI}P_s(G^i) \leq \liminf_{n \to \infty}P_s(E_n), \quad \sum_{[i] \in \capI}V_g(G^i) \leq \liminf_{n \to \infty}V_g(E_n).
	\end{equation}
	Indeed, we first fix $M \in \mathN$ and $R>0$ and we take the equivalent classes $i_1,\cdots,\,i_M$. Notice that, if $p \neq q$, then $|z_n^{i_p} - z_n^{i_q}| \to \infty$ as $n\to\infty$ and thus we have that $\{z_n^{i_p}+Q_R\}_{p}$ are disjoint sets for large $n$ and 
	\begin{equation}\nonumber
		\int_{z_n^{i_p} + Q_R}\int_{z_n^{i_q} + Q_R} \frac{1}{|x-y|^{N+s}}\,dx\,dy \xrightarrow[n \to \infty]{} 0
	\end{equation} 
	where $Q_R$ is the cube of side $R$. We recall the inequality of the nonlocal perimeter;
	\begin{equation}\nonumber
		P_s(E; A) + P_s(E; B) \leq P_s(E; A \sqcup B) + 2\int_{A}\int_{B}\frac{dx\,dy}{|x-y|^{N+s}}
	\end{equation}
	for any measurable disjoint sets $A,\,B\subset \mathR^N$. As a consequence, from the lower semi-continuity of $P_s$, we obtain
	\begin{align}
		\sum_{p=1}^{M} P_s(G^{i_p} ; Q_R) &\leq \liminf_{n \to \infty}	\sum_{p=1}^{M} P_s(E_n-z_n^{i_p} ; Q_R) \nonumber\\
		&= \liminf_{n \to \infty}	\sum_{p=1}^{M} P_s(E_n ; z_n^{i_p}+Q_R) \nonumber\\
		&\leq \liminf_{n \to \infty} P_s\left(E_n; \bigcup_{p=1}\left(z_n^{i_p}+Q_R\right) \right) \nonumber\\
		&\qquad + \liminf_{n \to \infty}2\sum_{p \neq q} \int_{z_n^{i_p} + Q_R}\int_{z_n^{i_q} + Q_R} \frac{dx\,dy}{|x-y|^{N+s}} \nonumber\\
		&\leq \liminf_{n \to \infty} P_s(E_n). \nonumber
	\end{align} 
	Letting $R \to \infty$ and then $M \to \infty$, we obtain the first claim of \eqref{lowerSemicontiConcentration}. For the second claim, we again take any $M\in\mathN$ and $R>0$. We recall the identity
	\begin{equation}\nonumber
		V_g(A) + V_g(B) = V_g(A \sqcup B) - 2\int_{A}\int_{B}g(x-y)\,dx\,dy 
	\end{equation}
	for any measurable disjoint set $A,\,B \subset \mathR^N$. Then, in the same way as we have observed in the first claim, we have, from Fatou's lemma and the non-negativity of $g$, that 
	\begin{align}
		\sum_{p=1}^{M} V_g(G^{i_p} \cap Q_R) &\leq \liminf_{n \to \infty} \sum_{p=1}^{M} V_g \left( \left(E_n-z_n^{i_p}\right) \cap Q_R \right) \nonumber\\
		&= \liminf_{n \to \infty} \sum_{p=1}^{M} V_g \left(E_n \cap \left(z_n^{i_p}+ Q_R\right) \right) \nonumber\\
		&\leq \liminf_{n \to \infty} V_g\left(E_n \cap \bigcup_{p=1}^{M} \left(z_n^{i_p}+Q_R\right)\right) \nonumber\\
		&\leq \liminf_{n \to \infty} V_g(E_n). \nonumber
	\end{align}  
	Here we have used the fact that the sets $\{z_n^{i_p} + Q_R\}_{p=1}^{M}$ are disjoint if $n$ is sufficiently large from the choice of the points $\{z_n^{i_p}\}_{p=1}^{M}$. Thus, letting $R \to \infty$ and then $M\to\infty$, we obtain the second claim. 
	
	Now we show that 
	\begin{equation}\nonumber
		\sum_{[i] \in \capI} |G^i| = m.
	\end{equation}
	Indeed, from the $L^1_{loc}$-convergence of $\{\chi_{E_n - z_n^i}\}_{n\in\mathN}$ for any $i$, we have that, for any $R>0$ sufficiently large,
	\begin{equation}\label{estimateFromBelowLimitMeasu}
		|G^i| \geq |G^i \cap Q_R| = \lim_{n \to \infty}|(E_n-z_n^i) \cap Q_R|.
	\end{equation}
	If $j\in\mathN$ is such that $j \sim i$ and $c^{ij}<\frac{R}{100}$, then we have that $Q_n^i - z_n^i \subset Q_R$ for large $R>0$ and all $n$. Thus, from \eqref{estimateFromBelowLimitMeasu}, it follows
	\begin{align}\label{estimateParticleVolume}
		|(E_n-z_n^i) \cap Q_R| &= \sum_{i=1}^{I_n} |(E_n-z_n^i) \cap Q_R \cap \left(Q_n^i - z_n^i\right)| \nonumber\\
		&\geq \sum_{j:\,c^{ij}<\frac{R}{100}}|(E_n-z_n^i) \cap Q_R \cap \left(Q_n^i - z_n^i\right)| \nonumber\\
		&= \sum_{j:\,c^{ij}<\frac{R}{100}}|E_n \cap Q_n^i|
	\end{align} 
	for all $n$ and large $R>0$. Therefore, combining \eqref{estimateParticleVolume} with \eqref{estimateFromBelowLimitMeasu}, we obtain
	\begin{equation}\nonumber
		|G^i| \geq \sum_{j:\,c^{ij}<\frac{R}{100}} \alpha^i
	\end{equation}
	and, letting $R \to \infty$, we have
	\begin{equation}\nonumber
		|G^i| \geq \sum_{j: \, c^{ij}<\infty} \alpha^i = \sum_{j \in [i]} \alpha^i.
	\end{equation}
	Hence, recalling \eqref{identityLimitMeasu}, we have 
	\begin{equation}\label{keyVolumeInequality}
		\sum_{[i] \in \capI} |G^i| \geq \sum_{[i] \in \capI}\sum_{j \in [i]} \alpha^i = m.
	\end{equation}
	For the other inequality, we can easily obtain from the choice of $\{G^i\}_{i}$ in the following manner; for any $M\in\mathN$ and $R>0$, we take the equivalent classes $i_1,\cdots,\,i_M$ and then have that
	\begin{align}\label{keyVolumeEstimateOtherDirec}
		\sum_{p=1}^{M} |G^{i_p} \cap Q_R| &= \lim_{n \to \infty} \sum_{p=1}^{M} \left|\left(E_n - z_n^{i_p}\right) \cap Q_R \right| \nonumber\\
		&=  \lim_{n \to \infty} \sum_{p=1}^{M} \left|E_n \cap \left(z_n^{i_p} + Q_R\right) \right|.
	\end{align}
	Recalling the condition that $|z_n^{i_p} - z_n^{i_q}| \to \infty$ as $n \to \infty$ if $p \neq q$, we have that, for sufficiently large $n\in\mathN$, $\left(z_n^{i_p} + Q_R\right) \cap \left(z_n^{i_q} + Q_R\right) = \emptyset$ for any $p \neq q$. From \eqref{keyVolumeEstimateOtherDirec}, we have that
	\begin{equation}\nonumber
		\sum_{p=1}^{M} |G^{i_p} \cap Q_R| =  \lim_{n \to \infty}  \left|E_n \cap \bigcup_{p=1}^{M}\left(z_n^{i_p} + Q_R\right) \right| \leq m
	\end{equation}
	and thus, letting $R \to \infty$ and then $M \to \infty$, we obtain that 
	\begin{equation}\nonumber
		\sum_{[i] \in \capI} |G^{i}| = \sum_{p=1}^{\infty} |G^{i_p}| \leq m.
	\end{equation}
	This completes the proof of the claim. Taking into account all the above arguments, we obtain the existence of sets $\{G^i\}_{[i]\in\capI}$ satisfying the properties that
	\begin{equation}\label{keyPropertylowSemicontiVolumeEqualParticles}
		\sum_{[i] \in \capI} \capE_{s,g}(G^i) \leq \liminf_{n \to \infty} \capE_{s,g}(E_n), \quad \sum_{[i] \in \capI} |G^{i}| = m. 
	\end{equation}

	Now we claim that each particle $G^i$ for $[i] \in \capI$ is a minimizer of $\capE_{s,g}$ among sets of volume of $|G^i|$. Moreover, we show that $G^i$ is bounded for each $[i] \in \capI$. Indeed, we first recall the definition of $E_{s,g}$, which says that 
	\begin{equation}\nonumber
		E_{s,g}[m] \coloneqq \inf\left\{ \capE_{s,g}(E) \mid |E|=m \right\}
	\end{equation} 
	for any $m>0$, and the sub-additivity result of the function $m \mapsto E_{s,g}[m]$ as is shown in Lemma \ref{lemmaSubadditivityEnergy}. Notice that, in this theorem, we impose assumption $(\mathrm{g}3)$ as we show in Section \ref{sectionPreliminary}, which is stronger than $(\mathrm{g}4)$. Thus, we can apply Lemma \ref{lemmaSubadditivityEnergy} to the case in the present proof. Then, from \eqref{keyPropertylowSemicontiVolumeEqualParticles}, we have that
	\begin{align}\label{energyMinimalityEachPrticle}
		\sum_{p=1}^{M} \left(\capE_{s,g}(G^{i_p}) - E_{s,g}[|G^{i_p}|] \right) &\leq E_{s,g}[m] -  \sum_{p=1}^{M} E_{s,g}[|G^{i_p}|] \nonumber\\
		&\leq E_{s,g}\left[ \sum_{p=M+1}^{\infty}|G^{i_p}| \right] + E_{s,g}\left[ \sum_{p=1}^{M}|G^{i_p}| \right] -  \sum_{p=1}^{M} E_{s,g}[|G^{i_p}|] \nonumber\\
		&\leq E_{s,g}\left[ \sum_{p=M+1}^{\infty}|G^{i_p}| \right]
	\end{align}
	for any $M\in\mathN$. We can observe that $E_{s,g}[m] \to E_{s,g}[0] = 0$ as $m \to 0$ because $E_{s,g}[m]$ can be bounded by the quantity $C_1\,m^{\frac{N-s}{N}} + C_2\,m$ for small $m>0$, where $C_1$ and $C_2$ are the constants depending only on $N$, $s$, and $g$. Hence, letting $M \to \infty$ in \eqref{energyMinimalityEachPrticle}, we obtain that 
	\begin{equation}\nonumber
		\sum_{[i] \in \capI} \left(\capE_{s,g}(G^{i_p}) - E_{s,g}[|G^{i_p}|] \right) =  \sum_{p=1}^{\infty} \left(\capE_{s,g}(G^{i_p}) - E_{s,g}[|G^{i_p}|] \right) \leq 0
	\end{equation}
	and, from the fact that each term of the series is non-negative, we conclude that each term of the series is equal to zero. This implies that, for every $[i]\in\capI$, $G^i$ is a minimizer of $\capE_{s,g}$ among sets of volume of $|G^i|$. To see the boundedness of $\{G^i\}_{[i]\in\capI}$, it is sufficient to apply Lemma \ref{lemmaBoundednessMinimizers} to $G^i$ for each $[i]\in\capI$.
	
	Now we show that the set $\capI$ of the equivalent classes is actually a finite set. Indeed, we first set $m^{i_p} \coloneqq |G^{i_p}|$ for any $p \in \mathN$ and, since $\sum_{p=1}^{\infty} m^{i_p} = m$, we can observe that $m^{i_p} \to 0$ as $p \to \infty$ and, moreover, $\mu_{\ell} \coloneqq \sum_{p=\ell+1}^{\infty} m^{i_p} \to 0$ as $\ell \to \infty$. Then, we can choose $\widetilde{p} \in \mathN$ such that $m^{i_{\widetilde{p}}} \geq \frac{m}{2^{\widetilde{p}+1}}$. Now using the sets $\{G^{i_p}\}_{p=1}^{\infty}$, we construct a new family of sets $\{\widetilde{G}^{i_p}\}_{p=1}^{H}$ for some $H \in \mathN$, depending only on $N$, $s$, and $m$, in the following manner; we choose $H \in \mathN$ so large that $H \geq \widetilde{p}$ and set $\widetilde{G}^{i_p} \coloneqq G^{i_p}$ for any $p \in \{1,\cdots ,\,H\}$ with $p \neq \widetilde{p}$ and $\widetilde{G}^{i_{\widetilde{p}}} \coloneqq \lambda\,G^{i_{\widetilde{p}}}$ where $\lambda^{N} \coloneqq \frac{m^{i_{\widetilde{p}}} + \mu_{H}}{m^{i_{\widetilde{p}}}}$. Then, we have the volume identity that
	\begin{equation}\label{volumeIdentityReorganized}
		\sum_{p=1}^{H} \left| \widetilde{G}^{i_p} \right| = \sum_{p=1, \,p\neq\widetilde{p}}^{H} \left| G^{i_p} \right| + \lambda^N\, |G^{i_{\widetilde{p}}}| = \sum_{p=1,\,p\neq\widetilde{p}}^{H} m^{i_p} + m^{i_{\widetilde{p}}} + \mu_{H} = m.
	\end{equation}
	Now we compute the energy for $\{\widetilde{G}^{i_p}\}_{p=1}^{H}$ as follows to show that the total energy of each elements of $\{\widetilde{G}^{i_p}\}_{p=1}^{H}$ is more efficient than that of $\{G^{i_p}\}_{p=1}^{\infty}$; from the definition of $\lambda \geq 1$ and Lemma \ref{lemmaScalingEnergy}, we have that
	\begin{align}\label{comparisonNewSetsEnergy}
		\sum_{p=1}^{H} \capE_{s,g}(\widetilde{G}^{i_p}) &\leq \sum_{p=1,\,p\neq\widetilde{p}}^{H} \capE_{s,g}(G^{i_p}) + \lambda^{2N}\,\capE_{s,g}(G^{i_{\widetilde{p}}}) \nonumber\\
		&= \sum_{p=1}^{\infty} \capE_{s,g}(G^{i_p}) + \left(\lambda^{2N} - 1\right)\,\capE_{s,g}(G^{i_{\widetilde{p}}}) - \sum_{p=H+1}^{\infty}\capE_{s,g}(G^{i_p}) \nonumber\\
		&\leq \sum_{[i] \in \capI} \capE_{s,g}(G^{i_p}) + \frac{2^{\widetilde{p}+1}\,E_{s,g}[m]}{m}\,\mu_{H} - \sum_{p=H+1}^{\infty} P_s(G^{i_p}).
	\end{align}
	Here, in the last inequality, we have also used \eqref{keyPropertylowSemicontiVolumeEqualParticles}. From the isoperimetric inequality of $P_s$ and \eqref{comparisonNewSetsEnergy}, we further obtain that
	\begin{align}
		\sum_{p=1}^{H} \capE_{s,g}(\widetilde{G}^{i_p}) &\leq \sum_{[i] \in \capI} \capE_{s,g}(G^i) + \frac{2^{\widetilde{p}+1}\,E_{s,g}[m]}{m}\, \mu_{H} - C\sum_{p=H+1}^{\infty} \left(m^{i_p}\right)^{\frac{N-s}{N}}  \nonumber\\
		&\leq \sum_{[i] \in \capI} \capE_{s,g}(G^i) + \frac{2^{\widetilde{p}+1}\,E_{s,g}[m]}{m}\, \mu_{H} - C \left(\sum_{p=H+1}^{\infty} m^{i_p}\right)^{\frac{N-s}{N}} \nonumber\\
		&= \sum_{[i] \in \capI} \capE_{s,g}(G^i) + \frac{2^{\widetilde{p}+1}\,E_{s,g}[m]}{m}\, \mu_{H} - C \left( \mu_{H} \right)^{\frac{N-s}{N}}. \nonumber
	\end{align}
	Taking the number $H$ so large that $H \geq \widetilde{p}$ and 
	\begin{equation}\nonumber
		\frac{2^{\widetilde{p}+1}\,E_{s,g}[m]}{m}\, \mu_{H} - C \left( \mu_{H} \right)^{\frac{N-s}{N}} \leq 0,
	\end{equation}
	then we finally obtain that 
	\begin{equation}\label{reductionFiniteElementsMinimizer}
		\sum_{p=1}^{H} \capE_{s,g}(\widetilde{G}^{i_p}) \leq \sum_{[i] \in \capI} \capE_{s,g}(G^i) \leq \liminf_{n \to \infty}\capE_{s,g}(E_n).
	\end{equation}
	This completes the proof of the claim.
	
	Finally, we show that there exists one number $i_0 \in \mathN$ such that $|G^i|=0$ for any $[i] \in \capI$ with $i \neq i_0$, using the assumption $(\mathrm{g}3)$. This proves the existence of minimizers of $\capE_{s,g}$, namely, Theorem \ref{theoremExistMiniAnyVolumeFasterDecay}. Indeed, if the claim is true, then from \eqref{lowerSemicontiConcentration} and \eqref{keyVolumeInequality}, we obtain
	\begin{equation}\nonumber
		\capE_{s,g}(G^{i_0}) = P_s(G^{i_0}) + V_g(G^{i_0}) = \sum_{[i] \in \capI}\left(P_s(G^i) + V_g(G^i) \right) \leq \liminf_{n \to \infty} \capE_{s,g}(E_n) = E_{s,g}[m]
	\end{equation}
	and 
	\begin{equation}\nonumber
		|G^{i_0}| = \sum_{[i]\in\capI}|G^i| =m.
	\end{equation}
	Therefore, $G^{i_0}$ is a minimizer of $\capE_{s,g}$ with $|G^{i_0}| = m$ for any $m>0$. 
	
	From \eqref{keyVolumeInequality}, there exists at least one number $p'\in\mathN$ such that $|G^{i_{p'}}| > 0$. Then we claim that, if $q \neq p'$, then it holds that $|G^{i_q}|=0$. Indeed, from the previous claim, we can restrict ourselves to consider a finite number of sets $\{\widetilde{G}^{i_p}\}_{p=1}^{H}$, which satisfies \eqref{reductionFiniteElementsMinimizer} and $\sum_{p=1}^{H}|\widetilde{G}^{i_p}| = m$, instead of $\{G^i\}_{[i]\in\capI}$. Moreover, we may assume that $H \geq p'$. Since we have shown that the sets $\{\widetilde{G}^{i_p}\}_{p=1}^{H}$ are bounded, we can choose the points $\{z^{i_p}\}_{p=1,\,p \neq p'}^{H}$ such that each set $\widetilde{G}^{i_p} + R\,z^{i_p}$ is far away from the others for large $R>1$. We can thus compute the energy as follows; from the translation invariance of $\capE_{s,g}$, it holds that
	\begin{align}
		\sum_{p=1}^{H} \capE_{s,g}(\widetilde{G}^{i_p}) &= \sum_{p=1\,p \neq p',q}^{H} \capE_{s,g}(\widetilde{G}^{i_p}) + \capE_{s,g}(\widetilde{G}^{i_{p'}}) + \capE_{s,g}(\widetilde{G}^{i_q}) \nonumber\\
		&= \sum_{p=1\,p \neq p',q}^{H} \capE_{s,g}(\widetilde{G}^{i_p}) + \capE_{s,g}(\widetilde{G}^{i_{p'}}) + \capE_{s,g}(\widetilde{G}^{i_q} + R\,z^{i_q}) \nonumber\\
		&= \sum_{p=1\,p \neq p',q}^{H} \capE_{s,g}(\widetilde{G}^{i_p}) + \capE_{s,g}(\widetilde{G}^{i_{p'}} \cup (\widetilde{G}^{i_q} + R\,z^{i_q})) \nonumber\\
		&\qquad + 2\int_{\widetilde{G}^{i_{p'}}} \int_{\widetilde{G}^{i_q} + R\,z^{i_q}} \frac{dx\,dy}{|x-y|^{N+s}} - 2\int_{\widetilde{G}^{i_{p'}}} \int_{\widetilde{G}^{i_q} + R\,z^{i_q}}g(x-y)\,dx\,dy \nonumber
	\end{align}
	for any $q \in \{1,\cdots,\,H\}$ with $q \neq p'$ and sufficiently large $R>1$. Recalling the assumption $(\mathrm{g}3)$ that $g(x) \leq \beta|x|^{-(N+s)}$ for any $|x| \geq R_0$ and some $\beta\in(0,\,1)$, and choosing $R>1$ in such a way that the set $\widetilde{G}^{i_q} + R\,z^{i_q}$ has the distance of more than $R_0$ from $\widetilde{G}^{i_{p'}}$, we obtain that
	\begin{align}\label{keyEstimateReductionOneElement}
		\sum_{p=1}^{H} \capE_{s,g}(\widetilde{G}^{i_p}) &\geq \sum_{p=1,\,p \neq p',\,q}^{H} \capE_{s,g}(\widetilde{G}^{i_p}) + \capE_{s,g}(\widetilde{G}^{i_{p'}} \cup (\widetilde{G}^{i_q} + R\,z^{i_q})) \nonumber\\
		&\qquad + 2(1-\beta)\int_{\widetilde{G}^{i_{p'}}} \int_{\widetilde{G}^{i_q} + R\,z^{i_q}} \frac{dx\,dy}{|x-y|^{N+s}}. 
	\end{align}
	By repeating the same argument finite times for the rest of the sets $\{\widetilde{G}^{i_p}\}_{p=1,\,p \neq p',q}^{H}$ with sufficiently large $R>1$, we obtain the similar inequalities to \eqref{keyEstimateReductionOneElement} and, finally, we can derive the inequality that
	\begin{align}\label{keyEstimateReductionOneElement02}
		\sum_{p=1}^{H} \capE_{s,g}(\widetilde{G}^{i_p}) &\geq \capE_{s,g}\left( \widetilde{G}^{i_{p'}} \cup \bigcup_{p=1,\,p \neq p'}^{H}\left( \widetilde{G}^{i_p} + R\,z^{i_p} \right) \right) \nonumber\\
		&\qquad + 2(1-\beta)\sum_{p=1,\,p \neq p'}^{H} \int_{\widetilde{G}^{i_{p'}}} \int_{\widetilde{G}^{i_p} + R\,z^{i_p}} \frac{dx\,dy}{|x-y|^{N+s}}.
	\end{align}
 	Since $\widetilde{G}^{i_{p'}} \cup \bigcup_{p=1,\,p \neq p'}^{H}\left( \widetilde{G}^{i_p} + R\,z^{i_p} \right)$ are the union of disjoint sets, we have, from \eqref{volumeIdentityReorganized}, that 
 	\begin{equation}\nonumber
 		\left| \widetilde{G}^{i_{p'}} \cup \bigcup_{p=1,\,p \neq p'}^{H}\left( \widetilde{G}^{i_p} + R\,z^{i_p} \right)\right| = \sum_{p=1}^{H} |\widetilde{G}^{i_{p}}| = m.
 	\end{equation}
 	Thus, from \eqref{keyEstimateReductionOneElement02}, we obtain
 	\begin{align}
 		2(1-\beta)&\sum_{p=1,\,p \neq p'}^{H} \int_{\widetilde{G}^{i_{p'}}} \int_{\widetilde{G}^{i_p} + R\,z^{i_p}} \frac{dx\,dy}{|x-y|^{N+s}} + E_{s,g}[m] \nonumber\\
 		&\leq \sum_{p=1,\,p \neq p'}^{H} \int_{\widetilde{G}^{i_{p'}}} \int_{\widetilde{G}^{i_p} + R\,z^{i_p}} \frac{dx\,dy}{|x-y|^{N+s}} + \capE_{s,g}\left( \widetilde{G}^{i_{p'}} \cup \bigcup_{p=1,\,p \neq p'}^{H}\left( \widetilde{G}^{i_p} + R\,z^{i_p} \right) \right) \nonumber\\
 		&\leq  \sum_{p=1}^{H} \capE_{s,g}(\widetilde{G}^{i_p}) \leq E_{s,g}[m] \nonumber
 	\end{align}
 	and it follows that
 	\begin{equation}\nonumber
 		2(1-\beta)\sum_{p=1,\,p \neq p'}^{H} \int_{\widetilde{G}^{i_{p'}}} \int_{\widetilde{G}^{i_p} + R\,z^{i_p}} \frac{dx\,dy}{|x-y|^{N+s}} \leq 0
 	\end{equation}
 	for large $R>1$. Since each term of the sum is non-negative, $\beta < 1$, and $|\widetilde{G}^{i_{p'}}|>0$, we conclude that $|\widetilde{G}^{i_p}| = 0$ for all $p \neq p'$. Therefore, the final claim is valid and this completes the proof of Theorem \ref{theoremExistMiniAnyVolumeFasterDecay}.
\end{proof}

\subsection{Regularity of the boundaries of minimizers}
In this subsection, we consider the regularity of the boundary of a minimizer of $\capE_{s,g}$ under suitable conditions on the kernel $g$. To see this, we recall several results on the regularity of the so-called \textit{almost $s$-fractional minimal surfaces}. The first one is on the $C^1$-regularity of the $s$-fractional almost minimal surfaces, which was shown by Caputo and Guillen in \cite{CaGu}.

\begin{theorem}[\cite{CaGu}] \label{regularityCaputoGuillen}
	Let $s \in (0,\,1)$ and $\delta>0$, and let $\Omega \subset \mathR^N$ be any bounded domain with Lipschitz boundary. Suppose $E$ is a $(P_s,\rho,\delta,)$-minimal in $\Omega$, where $\rho: (0,\,\delta) \to \mathR$ is a non-decreasing and bounded  function with some growth condition. Here we mean by  $(P_s,\rho,\delta,)$-minimal in $B_R$ for some $R>0$ that for any $x_0\in\partial E$, a measurable set $F \subset \mathR^N$, and $0 < r < \min\{ \delta, \, \dist(x_0,\partial B_R)\}$ with $E \bigtriangleup F \subset B_r(x_0)$, we have
	\begin{equation}\label{almostMinimizerDef}
		P_s(E;B_R) \leq P_s(F;B_R) + \rho(r)\,r^{N-s}.
	\end{equation}
	Then $\partial E$ is of class $C^1$ in $B_{\frac{R}{2}}$, except a closed set of $\capH^{N-2}$-dimension.   
\end{theorem}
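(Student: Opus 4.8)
The statement is quoted from \cite{CaGu}, and the plan below sketches the argument given there, which adapts to the almost-minimal setting the De Giorgi--type regularity theory for nonlocal minimal surfaces of Caffarelli--Roquejoffre--Savin \cite{CRS}. The decisive structural feature is that the perturbative term in \eqref{almostMinimizerDef} carries precisely the homogeneity exponent $N-s$ of $P_s$, so it behaves as a lower-order error under rescaling.

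First I would establish \emph{uniform density estimates} for a $(P_s,\rho,\delta)$-minimal set $E$: inserting $F=E\cup B_r(x_0)$ and $F=E\setminus B_r(x_0)$ into \eqref{almostMinimizerDef} and using the nonlocal isoperimetric inequality (exactly as in the proof of Lemma \ref{lemmaUniformDensity}) yields a constant $c\in(0,1/2)$ with
\[
 c\ \le\ \frac{|E\cap B_r(x_0)|}{|B_r|}\ \le\ 1-c\qquad\text{for all }x_0\in\partial E,\ 0<r<\min\{\delta,\dist(x_0,\partial B_R)\}/2 .
\]
Together with the local energy bound $\sup_r r^{s-N}P_s(E;B_r(x_0))<\infty$, these estimates give compactness: a sequence of $(P_s,\rho_k,\delta_k)$-minimal sets converges, up to subsequence, in $L^1_{loc}$ and in the Hausdorff sense of boundaries, and the limit is again almost $s$-minimal. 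In particular the blow-ups $(E-x_0)/r$ of a fixed $E$ at $x_0\in\partial E$ converge as $r\to0^+$ to a set that minimizes $P_s$ in every ball, since the rescaled error is $\rho(r)\to0$ by the growth condition imposed on $\rho$.

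The core of the proof is the \emph{improvement of flatness}: there are $\varepsilon_0>0$ and $\theta\in(0,1/2)$ such that if $\partial E\cap B_1$ is contained in a slab $\{\,|x\cdot e|\le\varepsilon\,\}$ with $\varepsilon\le\varepsilon_0$ and $\rho$ sufficiently small on $(0,1)$, then $\partial E\cap B_\theta$ lies in a slab $\{\,|x\cdot e'|\le \theta\varepsilon/2\,\}$ for some unit vector $e'$ with $|e'-e|\lesssim\varepsilon$. This is proved by contradiction: if it failed along a sequence, rescaling the slabs to unit height and invoking the density estimates together with the compactness above, one extracts a limiting flat graph solving the linearized (fractional) minimal surface equation, hence enjoying a Harnack/flatness-improvement estimate that contradicts the assumed failure; the almost-minimality error disappears in the limit exactly because of the $N-s$ scaling. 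Iterating the flatness improvement along dyadic scales, at every point of $\partial E$ the slabs contract geometrically and the normal directions $e'$ form a Cauchy sequence with a uniform modulus of continuity, which is precisely $C^1$ regularity of $\partial E$ near that point.

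Finally, the singular set $\Sigma\subset\partial E$ of points where this dichotomy never triggers is handled by Federer's dimension-reduction scheme: every blow-up at a point of $\Sigma$ is a nontrivial $P_s$-minimizing cone, and since in low dimensions such cones are necessarily half-spaces (Caffarelli--Roquejoffre--Savin, and its subsequent refinements), one obtains $\dim_{\capH}\Sigma\le N-2$, yielding the closed $(N-2)$-dimensional exceptional set in the statement. The step I expect to be the main obstacle is making the flatness improvement genuinely quantitative in the nonlocal framework: because $P_s$ depends on the whole configuration of $E$, the blow-up argument must control the tails coming from the part of $E$ far from $B_1$, and one must verify that the perturbative error $\rho(r)\,r^{N-s}$ does not accumulate over the infinitely many iteration steps --- which is also why one only reaches $C^1$, rather than $C^{1,\alpha}$, when $\rho$ is merely bounded with a mild growth bound.
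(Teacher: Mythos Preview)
The paper does not give its own proof of this statement: Theorem \ref{regularityCaputoGuillen} is simply quoted from \cite{CaGu} as a known result, with no argument supplied in the present paper. Your sketch is a faithful outline of the Caputo--Guillen strategy (density estimates, compactness of blow-ups, improvement of flatness by linearization and contradiction, iteration, and Federer dimension reduction), so there is nothing to compare against here.
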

\begin{remark}
	We remark that we can choose, for instance, the function $r \mapsto C\,r^{\delta}$ with $0< \delta \leq s$ and some constant $C>0$ as the function $\rho$ in Theorem \ref{regularityCaputoGuillen}. Hence, we are allowed to consider the exponent of the growth term in \eqref{almostMinimizerDef} up to $N$.
\end{remark}

Now we recall another result of the regularity of the boundary of minimizers. Namely, we show the improvement of flatness statement proved in \cite[Theorem 3.4]{FFMMM} by using the method developed in \cite{CaGu}. This result implies $C^{1,\alpha}$-regularity of almost minimal surfaces by using a standard argument.
\begin{theorem}[\cite{CaGu, FFMMM}]\label{improvementFlatnessFFMMM}
	Let $s_0 \in (0,\,1)$ and $\Lambda >0$. Then there exist $\tau,\,\eta,\,q \in (0,\,1)$ depending only on $N$, $s_0$, and $\Lambda$ with the following property: we assume that $E$ is a $\Lambda$-minimizer of the $s$-perimeter for some $s \in [s_0,\,1)$ with $x_0 \in \partial E$. Here we mean by $\Lambda$-minimizer that $E$ is a bounded measurable set in $\mathR^N$ satisfying the condition that, for any bounded set $F \subset \mathR^N$,
	\begin{equation}\nonumber
		P_s(E) \leq P_s(F) + \frac{\Lambda}{1-s}|E \Delta F|.
	\end{equation}
	Then, if 
	\begin{equation}\nonumber
		\partial E \cap B_1(x_0) \subset \{y \mid |(y-x_0)\cdot e | < \tau \}
	\end{equation}
	for some $e \in \mathS^{N-1}$, then there exists $e_0 \in \mathS^{N-1}$ such that 
	\begin{equation}\nonumber
		\partial E \cap B_\eta(x_0) \subset \{y \mid |(y-x_0)\cdot e_0 | < q\,\tau,\eta \}.
	\end{equation}
\end{theorem}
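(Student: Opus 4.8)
The plan is to prove this by a Savin-type compactness (blow-up) argument, carried out uniformly in $s\in[s_0,1)$, following the approach of \cite{CaGu} and \cite[Theorem~3.4]{FFMMM}; the degeneration $s\to1$ would be handled via the limiting estimates of \cite{CaVa}. By translation and rotation invariance of both $P_s$ and the $\Lambda$-minimality inequality, I may assume $x_0=0$ and $e=e_N$. I would fix $q:=\tfrac12$ once and for all and argue by contradiction: were no admissible pair $(\tau,\eta)$ to exist, there would be exponents $s_k\in[s_0,1)$ with $s_k\to\bar s\in[s_0,1]$, $\Lambda$-minimizers $E_k$ of the $s_k$-perimeter with $0\in\partial E_k$, unit vectors $e_k\to\bar e$, and flatness $\partial E_k\cap B_1\subset\{\,|y\cdot e_k|<\tau_k\,\}$ with $\tau_k\to0$, for which the improved flatness fails at every scale $\eta$ we will later select.

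The core is the compactness step. As a $\Lambda$-minimizer of the $s_k$-perimeter, each $E_k$ enjoys uniform density estimates and a uniform clean-ball condition along $\partial E_k$, with constants depending only on $N$, $s_0$ and $\Lambda$: the almost-minimality error $\tfrac{\Lambda}{1-s}|E\Delta F|$ is of lower order with respect to the scaling of the $s$-perimeter, which is precisely what keeps these estimates uniform down to $s_0$ and, after the $(1-s)$-normalization, stable as $s\to1$. Together with the flatness hypothesis they force $\partial E_k$ to coincide, inside $B_{1/2}$, with the graph of a function $u_k$ over $e_k^\perp$ with $\|u_k\|_{L^\infty}\le\tau_k$ and no vertical portions. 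Setting $v_k:=u_k/\tau_k$, so $\|v_k\|_{L^\infty}\le1$, I would then show --- using the Euler--Lagrange equation satisfied by $E_k$ together with the $\Gamma$-convergence of the correspondingly rescaled $s_k$-perimeters to the appropriate Dirichlet-type energy --- that, along a subsequence, $v_k\to v_\infty$ locally uniformly, where $v_\infty$ solves the linearized equation $(-\Delta)^{(1+\bar s)/2}v_\infty=0$ in $B'_{1/2}\subset\mathR^{N-1}$ when $\bar s<1$, and $\Delta v_\infty=0$ when $\bar s=1$. (Here one uses that the linearization of the $s$-fractional mean curvature on graphs is $(-\Delta)^{(1+s)/2}$, and that $\tfrac{1+s}{2}\to1$ as $s\to1$.) In either case $v_\infty$ is $C^\infty$ in the interior, $v_\infty(0)=0$, $\|v_\infty\|_{L^\infty}\le1$, and the interior estimates --- uniform for $\tfrac{1+\bar s}{2}\in[\tfrac{1+s_0}{2},1]$ --- yield a first-order Taylor bound $|v_\infty(y')-\nabla v_\infty(0)\cdot y'|\le C_0\,|y'|^{1+\theta}$ for $|y'|\le\tfrac14$, with $C_0,\theta$ depending only on $N$ and $s_0$.

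To close the contradiction I would fix $\eta=\eta(N,s_0,\Lambda)\in(0,\tfrac14)$ so small that $C_0\,\eta^{\theta}\le\tfrac14$. Then the graph of $v_\infty$ over $B'_\eta$ lies in the tilted slab of half-width $\tfrac\eta4$ about the hyperplane $\{y_N=\nabla v_\infty(0)\cdot y'\}$; by the locally uniform convergence $v_k\to v_\infty$, for $k$ large the graph of $v_k$ over $B'_\eta$ lies in the tilted slab of half-width $\tfrac\eta2$, and undoing the vertical rescaling $\partial E_k\cap B_\eta\subset\{\,|y\cdot e_0|<q\,\tau_k\,\eta\,\}$ for a unit vector $e_0$ determined by $\nabla v_\infty(0)$ (and close to $e_k$). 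This contradicts the choice of the $E_k$, so the required $\tau,\eta$ exist, with all constants depending only on $N$, $s_0$ and $\Lambda$.

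The hard part will be the compactness step, and within it two points: (i) the density and clean-ball estimates for $\Lambda$-minimizers must be obtained \emph{uniformly in} $s\in[s_0,1)$ and stably as $s\to1$, since this is what permits the passage from flatness to graphicality of $\partial E_k$ and the passage to the limit; and (ii) the limiting equation must be identified, i.e. one must prove that the rescaled excess functionals $\Gamma$-converge to the $(1+\bar s)/2$-Gagliardo energy when $\bar s<1$ and to the classical Dirichlet energy when $\bar s=1$, so that $v_\infty$ is genuinely $(-\Delta)^{(1+\bar s)/2}$-harmonic, respectively harmonic. Alternatively, one could bypass the blow-up altogether and use the Harnack-type inequality for nonlocal almost-minimal surfaces from \cite{CaGu} to obtain the oscillation decay of the graph directly; in either route the delicate $s\to1$ regime is dealt with through the $(1-s)$-normalization and the uniform estimates of \cite{CaVa, FFMMM}.
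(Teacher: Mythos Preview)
The paper does not give its own proof of this theorem: it is quoted verbatim as a result from \cite{CaGu,FFMMM} (see the sentence preceding the statement, ``we recall another result\ldots\ proved in \cite[Theorem~3.4]{FFMMM} by using the method developed in \cite{CaGu}''), and is then used as a black box in the proof of Lemma~\ref{lemmaRegularityMinimizers}. So there is no in-paper argument to compare against.

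That said, your outline is exactly the Savin-type compactness scheme used in \cite{FFMMM} to prove their Theorem~3.4 (uniform in $s\in[s_0,1)$): contradiction, uniform density/clean-ball estimates to get graphicality, vertical rescaling $v_k=u_k/\tau_k$, compactness to a solution of the linearized equation $(-\Delta)^{(1+\bar s)/2}v_\infty=0$ (harmonic when $\bar s=1$), and interior estimates on $v_\infty$ to force improved flatness at a fixed scale $\eta$. Your identification of the two delicate points---uniformity of the density estimates in $s$, and identification of the limiting linearized equation with the correct handling of the $s\to1$ regime via the $(1-s)$-normalization and \cite{CaVa}---is precisely where the work lies in \cite{FFMMM}. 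One minor caveat: in the actual argument the passage from flatness to full graphicality over $e^\perp$ is not immediate from density alone; one also needs a Harnack/oscillation-decay step (as in \cite{CaGu} or \cite{CRS}) to rule out vertical pieces and to obtain the H\"older compactness of the $v_k$, so your ``alternatively'' route via the Harnack inequality is in fact part of the main line rather than a bypass.
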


Originally, the regularity of nonlocal minimal surfaces was obtained by Caffarelli, Roquejoffre, and Savin in \cite{CRS}, which states that every $s$-minimal surface is locally $C^{1,\alpha}$ except the singular sets of $\capH^{N-2}$-dimension. More importantly, thanks to the result by Savin and Valdinoci in \cite{SaVa}, the singular set of $s$-minimal surfaces has the Hausdorff dimension up to $N-3$. Hence one can have that $s$-minimal surfaces in $\mathR^2$ are fully $C^{1,\alpha}$-regular.

As a consequence of these regularity results, we obtain the regularity of the minimizers of $\capE_{s,g}$. Before stating the regularity result, we reduce the minimization problem $E_{s,g}[m]$ for any $m>0$ to another minimization problem because that reduction may allow us to consider more easily. More precisely, we show that any solutions of the minimization problem $E_{s,g}[m]$ are also the solutions of the unconstrained minimization problem
\begin{equation}\nonumber
	\inf\left\{ \capE_{s,g,\mu_0}(E) \mid \text{$E \subset \mathR^N$: measurable} \right\}
\end{equation}
for some constant $\mu_0>0$ and any $m>0$, where we define $\capE_{s,g,\mu_0}$ as
\begin{equation}\nonumber
	\capE_{s,g,\mu}(F) \coloneqq\capE_{s,g}(F)+ \mu\,\left| |F|-m \right|
\end{equation}
for any $F \subset \mathR^N$ and $\mu>0$.
\begin{proposition}[Reduction to an unconstrained problem]\label{propositionEquivalenceProblem}
	Let $m>0$. Assume that the kernel $g$ satisfies the conditions $(\mathrm{g}1)$ and $(\mathrm{g}2)$. Then there exists a constant $\mu_0=\mu_0(N,s,g,m)>0$ such that, if $E$ is a minimizer of $\capE_{s,g}$ with $|E|=m$, then $E$ is also a minimizer of $\capE_{s,g,\mu}$ among sets in $\mathR^N$ for any $\mu \geq \mu_0$.
\end{proposition}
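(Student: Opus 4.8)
The plan is to run the standard penalization/truncation argument in its fractional form: we exhibit a threshold $\mu_0$, depending only on $N$, $s$, $g$, and $m$, such that for $\mu\ge\mu_0$ no competitor can beat $E$ in $\capE_{s,g,\mu}$. First I would record the two elementary facts that pin down $\mu_0$: the number $E_{s,g}[m]$ is finite (test with a ball and use $g\in L^1_{loc}(\mathR^N)$, exactly as in the Remark following the assumptions) and strictly positive (by the fractional isoperimetric inequality, $E_{s,g}[m]\ge P_s(B_1)\,(m/|B_1|)^{(N-s)/N}>0$, since $\capE_{s,g}\ge P_s$). Set $\mu_0\coloneqq 8\,E_{s,g}[m]/m$.

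Now fix $\mu\ge\mu_0$ and an arbitrary measurable $F\subset\mathR^N$; since $\capE_{s,g,\mu}(E)=\capE_{s,g}(E)=E_{s,g}[m]$, it suffices to prove $E_{s,g}[m]\le\capE_{s,g}(F)+\mu\bigl|\,|F|-m\,\bigr|$. I would split into three regimes. \emph{(i) Far from the constraint, or large energy.} If $\bigl|\,|F|-m\,\bigr|\ge m/2$ the penalization term alone is at least $\mu_0 m/2=4E_{s,g}[m]\ge E_{s,g}[m]$, and if $\capE_{s,g}(F)\ge E_{s,g}[m]$ there is nothing to prove; so it remains to treat $m/2<|F|<3m/2$ with $\capE_{s,g}(F)<E_{s,g}[m]$. \emph{(ii) Volume excess.} If $m\le|F|<3m/2$, choose $R$ with $|F\cap B_R|=m$ (the map $R\mapsto|F\cap B_R|$ is continuous, vanishes at $R=0$, and tends to $|F|\ge m$; take $\tilde F=F$ if $|F|=m$) and set $\tilde F\coloneqq F\cap B_R$. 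Since $B_R$ is convex, Proposition \ref{propositionIntersectionConvexSmaller} gives $P_s(\tilde F)\le P_s(F)$, while $V_g(\tilde F)\le V_g(F)$ because $g\ge0$ and $\tilde F\subset F$; hence $E_{s,g}[m]\le\capE_{s,g}(\tilde F)\le\capE_{s,g}(F)$, which already yields the claim. \emph{(iii) Volume deficit.} If $m/2<|F|<m$, put $\lambda\coloneqq(m/|F|)^{1/N}\in(1,2^{1/N})$ and $\tilde F\coloneqq\lambda F$, so $|\tilde F|=m$. By the scaling Lemma \ref{lemmaScalingEnergy}, $\capE_{s,g}(\tilde F)\le\lambda^{2N}\capE_{s,g}(F)=\capE_{s,g}(F)+(\lambda^{2N}-1)\capE_{s,g}(F)$; since $|F|>m/2$ one has $\lambda^{2N}-1=(m-|F|)(m+|F|)/|F|^2\le 8(m-|F|)/m$, and combining with $\capE_{s,g}(F)\le E_{s,g}[m]$ gives $(\lambda^{2N}-1)\capE_{s,g}(F)\le(8E_{s,g}[m]/m)(m-|F|)=\mu_0\bigl|\,|F|-m\,\bigr|\le\mu\bigl|\,|F|-m\,\bigr|$. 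Therefore $E_{s,g}[m]\le\capE_{s,g}(\tilde F)\le\capE_{s,g}(F)+\mu\bigl|\,|F|-m\,\bigr|$, as desired. As $F$ was arbitrary, $E$ minimizes $\capE_{s,g,\mu}$ for every $\mu\ge\mu_0$.

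The only real subtlety is structural. A naive attempt to restore the volume by adding or deleting a small ball would cost of order $\bigl|\,|F|-m\,\bigr|^{(N-s)/N}$, which is \emph{superlinear} in the deficit when the deficit is small and so cannot be absorbed by the linear penalization $\mu\bigl|\,|F|-m\,\bigr|$, no matter how large $\mu$ is; this is why a volume \emph{excess} must be removed by the monotone truncation $F\mapsto F\cap B_R$ (which costs nothing, by Proposition \ref{propositionIntersectionConvexSmaller}), whereas a volume \emph{deficit} must be corrected by the homothety $F\mapsto\lambda F$, for which Lemma \ref{lemmaScalingEnergy} supplies the sharp linear bound $(\lambda^{2N}-1)\,\capE_{s,g}(F)=O\bigl(\bigl|\,|F|-m\,\bigr|\bigr)$. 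The remaining bookkeeping point is that $\mu_0$ must not depend on the competitor $F$: this is ensured by discarding in step (i) the range $\bigl|\,|F|-m\,\bigr|\ge m/2$, so that the scaling factor is controlled with a uniform constant, and by observing that any $F$ not already beaten by the penalization satisfies $\capE_{s,g}(F)\le E_{s,g}[m]$, which lets one replace the floating factor $\capE_{s,g}(F)$ in the dilation estimate by the fixed quantity $E_{s,g}[m]$. No compactness, no regularity of $E$, and no boundedness of $F$ is needed.
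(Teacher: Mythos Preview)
Your proof is correct and uses the same two ingredients as the paper's argument: Proposition~\ref{propositionIntersectionConvexSmaller} to handle a volume excess by truncating with a ball, and the scaling Lemma~\ref{lemmaScalingEnergy} to handle a volume deficit by dilation. The only difference is presentational: the paper argues by contradiction (assume no $\mu_0$ works, extract a sequence $F_n$ with $\mu(n)\to\infty$, show $|F_n|\to m$, and arrive at the bound $\mu(n)\le 6\,E_{s,g}[m]/m$ for large $n$), whereas you fix $\mu_0=8\,E_{s,g}[m]/m$ from the outset and verify the inequality directly for every competitor. Your direct argument is slightly cleaner---it avoids the subsequence bookkeeping and the asymptotic estimate $(\lambda_n^{2N}-1)/(\lambda_n^N-1)\le 2$ for large $n$---and yields an explicit constant, but the mathematical content is the same.
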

\begin{proof}
	Suppose by contradiction that, for any $n \in \mathN$, there exist a minimizer $E_n$ of $\capE_{s,g}$ with $|E_n|=m$ and a constant $\mu(n) \geq n$ such that $E_n$ is not a minimizer of $\capE_{s,g,\mu(n)}$. Then, by assumption, we can choose a sequence $\{F_n\}_{n\in\mathN}$ such that 
	\begin{equation}\label{estimateFromSBC}
		\capE_{s,g,\mu(n)}(F_n) < \capE_{s,g,\mu(n)}(E_n)
	\end{equation}
	for any $n \in \mathN$. First of all, we show that $|F_n| \xrightarrow[n\to\infty]{} m$. Indeed, we set $B_m$ as a open ball in $\mathR^N$ whose volume is equal to $m$. Then from \eqref{estimateFromSBC} and the minimality of $E_n$ with $|E_n|=m$ for any $n\in\mathN$, we have that
	\begin{equation}\label{minimalityInequality}
		\capE_{s,g,\mu(n)}(F_n) < \capE_{s,g,\mu(n)}(E_n) = \capE_{s,g}(E_n) = E_{s,g}[m].
	\end{equation}
	Thus, denoting $r_m$ by the radius of the ball $B_m$ and using the change of variables, we obtain
	\begin{equation}\label{convergenceofF_n}
		\mu(n)\,||F_n| - m| <  E_{s,g}[m] < \infty
	\end{equation}
	for any $n\in\mathN$. From the definition of $r_m$, the right-hand side in \eqref{convergenceofF_n} is finite and independent of $n$. Hence, letting $n \to \infty$ in \eqref{convergenceofF_n}, we obtain the claim that $|F_n| \rightarrow m$ as $n\to\infty$. Finally, we derive a contradiction in the following manner. We first define $\widetilde{F}_n$ as $\widetilde{F}_n \coloneqq \lambda_n \, F_n$ where $\lambda_n^N \coloneqq m\,|F_n|^{-1}$ and, by definition, we can observe that $|\widetilde{F}_n| = m$. In the sequel, we may assume that, up to extracting a subsequence, $|F_n| \leq m$ for $n\in\mathN$. Indeed, we suppose by contradiction that, for any subsequence $\{F_{n_k}\}_{k\in\mathN}$ of $\{F_n\}_{n\in\mathN}$, we always have that $|F_{n_k}| > m$ for any $k\in\mathN$. From the continuity of the Lebesgue measure, for each $k\in\mathN$, there exists a constant $R_k>0$ such that $|F_{n_k} \cap B_{R_{n_k}}(0)| = m$ for every $k\in\mathN$. Thus, from the minimality of $E_n$ for any $n\in\mathN$ and Proposition \ref{propositionIntersectionConvexSmaller}, we have the estimate that
	\begin{equation}\nonumber
		\capE_{s,g,\mu(n)}(E_{n_k}) = \capE_{s,g}(E_{n_k}) \leq \capE_{s,g}(F_{n_k} \cap B_{R_{n_k}}(0)) \leq P_s(F_{n_k}) + V_g(F_{n_k}) = \capE_{s,g}(F_{n_k})
	\end{equation}  
	for any $k\in\mathN$, which contradicts the estimate \eqref{estimateFromSBC} since $\capE_{s,g}(F_{n_k}) \leq \capE_{s,g,\mu(n)}(F_{n_k})$ for any $k\in\mathN$. Hence, from \eqref{estimateFromSBC}, the minimality of $E_n$, the assumption that $\lambda_n \geq 1$ for any $n \in \mathN$, and Lemma \ref{lemmaScalingEnergy}, we have
	\begin{equation}\label{secondEstiMinimality}
		\capE_{s,g,\mu(n)}(F_n) < \capE_{s,g}(E_n) \leq \capE_{s,g}(\widetilde{F}_n) \leq \lambda_n^{2N} \capE_{s,g}(F_n).
	\end{equation} 
	From the definition, we notice that $||F_n| - m| = |\lambda_n^{-N} \, m - m| = m\,\lambda_n^{-N} \, |\lambda_n^N - 1|$ for any $n$. Hence, from \eqref{secondEstiMinimality} and dividing the both side of \eqref{secondEstiMinimality} by $||F_n| - m|$, we obtain
	\begin{equation}\label{secondEstiMinimality03}
		\mu(n) \leq m^{-1}\,\lambda_n^N \frac{|\lambda_n^{2N}-1|}{|\lambda_n^N-1|} P_s(F_n) + m^{-1}\,\lambda_n^N \frac{|\lambda_n^{2N}-1|}{|\lambda_n^N-1|} V_g(F_n)
	\end{equation}
	for any $n\in\mathN$. Recalling \eqref{estimateFromSBC} and \eqref{minimalityInequality}, we have that $P_s(F_n) + V_g(F_n) < E_{s,g}[m] < \infty$. Moreover, we observe that $\frac{|\lambda_n^{2N}-1|}{|\lambda_n^N-1|} \leq 2$ for sufficiently large $n \in \mathN$. Therefore, from \eqref{secondEstiMinimality03}, we obtain
	\begin{equation}\label{keyEstimateContradictionArgument}
		\mu(n) \leq 6m^{-1}\,E_{s,g}[m] 
	\end{equation}
	for sufficiently large $n \in \mathN$ and thus obtain a contradiction.
\end{proof}

Now we are prepared to show the regularity of minimizers for $\capE_{s,g}$
\begin{lemma}[Regularity of minimizers]\label{lemmaRegularityMinimizers}
	Let $s \in (0,\,1)$ and let $m>0$. Assume that the kernel $g: \mathR^N \setminus \{0\} \to \mathR$ satisfies $(\mathrm{g}1)$, $(\mathrm{g}2)$, and $(\mathrm{g}3)$. If $E \subset \mathR^N$ is a minimizer of $\capE_{s,g}$ among sets of volume $m$, then $\partial E$ is of class $C^{1,\alpha}$ with $0 < \alpha < 1$, except a closed set of $\capH^{N-3}$-dimension.
\end{lemma}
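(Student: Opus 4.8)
The plan is to strip off the volume constraint, show that a minimizer $E$ is then a $\Lambda$-minimizer of the fractional $s$-perimeter alone, and finally feed this into the regularity theory recalled in this subsection (Theorems \ref{regularityCaputoGuillen} and \ref{improvementFlatnessFFMMM}, together with the dimension-reduction of Savin and Valdinoci).

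First I would invoke Proposition \ref{propositionEquivalenceProblem}: since $(\mathrm{g}1)$ and $(\mathrm{g}2)$ hold, there is $\mu_0=\mu_0(N,s,g,m)>0$ such that $E$ minimizes the penalized functional $\capE_{s,g,\mu_0}$ over all measurable subsets of $\mathR^N$; moreover $E$ is bounded by Lemma \ref{lemmaBoundednessMinimizers}, and by $(\mathrm{g}3)$ we also have $g\in L^1(\mathR^N)$. Thus, for every measurable $F\subset\mathR^N$ with $E\Delta F$ bounded, using $|E|=m$,
\begin{equation}\nonumber
	P_s(E)\leq P_s(F)+\big(V_g(F)-V_g(E)\big)+\mu_0\,\big||F|-|E|\big|.
\end{equation}
Here $\big||F|-|E|\big|\leq|E\Delta F|$, and for the potential term I would use the elementary identity $\chi_F(x)\chi_F(y)-\chi_E(x)\chi_E(y)=\chi_F(x)\big(\chi_F-\chi_E\big)(y)+\chi_E(y)\big(\chi_F-\chi_E\big)(x)$ together with the symmetry and non-negativity of $g$ to get
\begin{equation}\nonumber
	V_g(F)-V_g(E)\leq 2\int_{\mathR^N}\int_{\mathR^N}g(x-y)\,\big|(\chi_F-\chi_E)(x)\big|\,dx\,dy=2\,\|g\|_{L^1(\mathR^N)}\,|E\Delta F|.
\end{equation}
Combining, $E$ is a $\Lambda$-minimizer of $P_s$ in the sense of Theorem \ref{improvementFlatnessFFMMM}, with $\Lambda\coloneqq(1-s)\big(2\|g\|_{L^1(\mathR^N)}+\mu_0\big)$.

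To conclude I would run the standard regularity chain. Since $|E\Delta F|\leq|B_1|\,r^N$ whenever $E\Delta F\subset B_r(x_0)$, and the exterior interactions of $P_s$ are unchanged when $E\Delta F\Subset B_R$, the $\Lambda$-minimality just proved makes $E$ a $(P_s,\rho,\delta)$-minimal set in any large ball $B_R\supset\overline{E}$ with $\rho(r)=c\,r^{s}$ for a suitable $c>0$, which is admissible by the remark following Theorem \ref{regularityCaputoGuillen}; hence $\partial E$ is of class $C^1$ outside a closed set of Hausdorff dimension at most $N-2$. At each such point the flatness hypothesis of Theorem \ref{improvementFlatnessFFMMM} is fulfilled, and iterating the improvement-of-flatness estimate in the usual way upgrades the regularity of $\partial E$ there to $C^{1,\alpha}$ for some $\alpha=\alpha(N,s,\Lambda)\in(0,1)$. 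Finally, the bound on the singular set is lowered from $N-2$ to $N-3$ by the dimension-reduction argument of Savin and Valdinoci \cite{SaVa} (as for $s$-minimal surfaces in \cite{CRS}), using that blow-ups of a $\Lambda$-minimizer of $P_s$ are $s$-minimal cones because the penalization term rescales with a factor $r^{s}\to0$.

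I expect the only genuine point requiring care to be the reduction to a pure $\Lambda$-minimality for $P_s$ alone: Proposition \ref{propositionEquivalenceProblem} disposes of the volume constraint, but one must still absorb the Riesz potential into the $\Lambda$-term, and this is exactly where the global integrability of $g$, guaranteed by the fast-decay assumption $(\mathrm{g}3)$, is indispensable — without it $V_g(F)-V_g(E)$ cannot be controlled by $|E\Delta F|$ and the cited $C^{1,\alpha}$-theory does not apply directly. Everything after that step is a direct appeal to the regularity results quoted above.
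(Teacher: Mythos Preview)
Your proposal is correct and follows essentially the same route as the paper: reduce to the unconstrained problem via Proposition \ref{propositionEquivalenceProblem}, absorb the Riesz term using $g\in L^1(\mathR^N)$ (which follows from $(\mathrm{g}3)$) to obtain $\Lambda$-minimality of $E$ for $P_s$ with $\Lambda$ proportional to $2\|g\|_{L^1(\mathR^N)}+\mu_0$, and then invoke Theorems \ref{regularityCaputoGuillen} and \ref{improvementFlatnessFFMMM} together with the Savin--Valdinoci dimension reduction. Your treatment of the final regularity chain is in fact slightly more explicit than the paper's, which simply cites the two theorems and leaves the $N-3$ bound on the singular set implicit from the surrounding discussion.
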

\begin{proof}
	First of all, from Lemma \ref{lemmaBoundednessMinimizers}, we have the essential boundedness of the minimizer $E \subset \mathR^N$, namely, $E \subset B_{R_1}(0)$ up to negligible sets for some $R_1>0$. Without loss of generality, we may assume that $R_1 \geq R_0$ where $R_0$ is given in assumption $(\mathrm{g}3)$ in Section \ref{sectionPreliminary}. In order to apply the regularity result of Theorem \ref{improvementFlatnessFFMMM} to our case, it is sufficient to show that the set $E$ is $\Lambda$-minimizer in the sense of Theorem \ref{improvementFlatnessFFMMM} for some constant $\Lambda>0$ independent of $E$. From Proposition \ref{propositionEquivalenceProblem}, we know that $E$ with $|E|=m$ is also a solution to the minimization problem
	\begin{equation}\nonumber
		\min\{\capE_{s,g,\mu_0}(E) \mid \text{$E\subset\mathR^N$: measurable}\}.
	\end{equation}
	where $\mu_0>0$ is as in Proposition \ref{propositionEquivalenceProblem} and is independent of $E$. Hence, from the minimality of $E$, we have that
	\begin{align}\label{minimalitySetE}
		\capE_{s,g,\mu_0}(E) \leq \capE_{s,g,\mu_0}(F)
	\end{align}
	for any bounded measurable set $F \subset \mathR^n$. We may assume that $F$ is finite with respect to the $s$-fractional perimeter; otherwise the inequality \eqref{minimalitySetE} is obviously valid. Then from the fact that $|E|=m$, we have
	\begin{align}\label{almostMinimizersEstimate}
		P_s(E) &\leq P_s(F) + V_g(F) - V_g(E) + \mu_0\,||F|-|E|| \nonumber\\
		&\leq P_s(F) + V_g(F) - V_g(E) + \mu_0\,|F \Delta E|. 
	\end{align}
	Regarding the Riesz potential, we can compute the difference $ V_g(F) - V_g(E)$ as follows:
	\begin{align}\label{estimateRiesz}
		| V_g(F) - V_g(E)| &\leq \left| \int_{F}\int_{F \cup E}g(x-y) \,dx\,dy - \int_{E}\int_{F \cup E}g(x-y) \,dx\,dy \right| \nonumber\\
		&\leq 2 \int_{F \Delta E}\int_{F \cup E}g(x-y) \,dx\,dy \nonumber\\
		&\leq 2|F \Delta E| \,\int_{\mathR^N} g(x)\,dx.
	\end{align}
	Note that, from the local integrability of $g$ and assumption $(\mathrm{g}3)$, the kernel $g$ is integrable in $\mathR^N$ and thus, the right-hand side in \eqref{estimateRiesz} is finite. Hence, by combining \eqref{estimateRiesz} with \eqref{almostMinimizersEstimate}, we obtain that 
	\begin{equation}\nonumber
		P_s(E) \leq P_s(F) +  \left( 2\|g\|_{L^1(\mathR^N)} + \mu_0\right)\,|F \Delta E| 
	\end{equation} 
	for any measurable set $F \subset \mathR^N$. Therefore, by employing Theorem \ref{regularityCaputoGuillen} and \ref{improvementFlatnessFFMMM}, we can conclude that the claim is valid.
\end{proof}

\section{Existence of generalized minimizers for $\widetilde{\capE}_{s,g}$ for any volumes}\label{sectionExistGeneMiniAnyVol}
In this section, we prove Theorem \ref{theoremExistGeneralizedMiniAnyVolume}, namely, the existence of generalized minimizers for the generalized functional $\widetilde{\capE}_{s,g}$ given as \eqref{defiGeneralziedFunctional} for any volumes. To see this, we impose slightly more general assumptions on $g$ than we do to prove the existence of minimizers of $\capE_{s,g}$ for any volumes in Section \ref{sectionExisMiniFastDecayGAnyVol}. More precisely, we assume that the kernel $g \in L^1_{loc}(\mathR^N)$ satisfies the assumptions $(\mathrm{g}1)$, $(\mathrm{g}2)$, and $(\mathrm{g}4)$ in Section \ref{sectionPreliminary}.

Before proving the main theorem, we show one lemma, saying that one can modify a ``generalized" minimizing sequence for the generalized functional $\widetilde{\capE}_{s,g}$ into a ``usual" minimizing sequence for the functional $\capE_{s,g}$. More precisely, we prove
\begin{lemma}\label{lemmaModifyMinimizingSeq}
	Let $s\in(0,\,1)$. Assume that the kernel $g : \mathR^N \setminus \{0\} \to \mathR$ satisfies the assumptions $(\mathrm{g}1)$, $(\mathrm{g}2)$, and $(\mathrm{g}4)$. Then, for any $m>0$, it follows that
	\begin{equation}\nonumber
		\inf\left\{\capE_{s,g}(E) \mid |E|=m \right\} = \inf\left\{\widetilde{\capE}_{s,g}(\{E^k\}_k) \mid \sum_{k=1}^{\infty} |E^k| = m \right\}.
	\end{equation}
\end{lemma}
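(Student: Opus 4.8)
The plan is to prove the two inequalities separately, the non-trivial direction resting on the sub-additivity Lemma \ref{lemmaSubadditivityEnergy}. One direction is immediate: any measurable $E\subset\mathR^N$ with $|E|=m$ can be regarded as the sequence $E^1\coloneqq E$, $E^k\coloneqq\emptyset$ for $k\ge2$, which is admissible for the right-hand side and satisfies $\widetilde{\capE}_{s,g}(\{E^k\}_k)=\capE_{s,g}(E)$; taking the infimum over $E$ gives
\[
	\inf\left\{\widetilde{\capE}_{s,g}(\{E^k\}_k) \mid \textstyle\sum_{k}|E^k|=m\right\}\ \le\ \inf\{\capE_{s,g}(E)\mid|E|=m\}=E_{s,g}[m].
\]
Hence it suffices to show that $E_{s,g}[m]\le\widetilde{\capE}_{s,g}(\{E^k\}_k)$ for \emph{every} sequence $\{E^k\}_k$ of measurable sets with $\sum_k|E^k|=m$; taking the infimum over such sequences then yields the claim.

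To this end I would fix such a sequence (the asserted inequality being trivial if $\widetilde{\capE}_{s,g}(\{E^k\}_k)=+\infty$), discard the members of zero measure — which alters neither the volume constraint nor $\widetilde{\capE}_{s,g}$ — and set $m_k\coloneqq|E^k|>0$, so $\sum_k m_k=m$. Applying Lemma \ref{lemmaSubadditivityEnergy} repeatedly, peeling off one mass $m_k$ at a time from the remaining total, produces for every $K\in\mathN$
\[
	E_{s,g}[m]\ \le\ \sum_{k=1}^{K}E_{s,g}[m_k]\ +\ E_{s,g}\Big[\,\textstyle\sum_{k>K}m_k\,\Big].
\]
Since $E^k$ is itself a competitor for the problem defining $E_{s,g}[m_k]$, one has $E_{s,g}[m_k]\le\capE_{s,g}(E^k)$, so the finite sum is bounded, uniformly in $K$, by $\sum_{k=1}^{\infty}\capE_{s,g}(E^k)=\widetilde{\capE}_{s,g}(\{E^k\}_k)$. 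If only finitely many $E^k$ are nonempty the tail term is eventually $E_{s,g}[0]=0$ and we are done; in general we must let $K\to\infty$.

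The remaining point — and the only one requiring a genuine, if routine, estimate — is that the tail term vanishes, i.e. $E_{s,g}[\mu_K]\to0$ where $\mu_K\coloneqq\sum_{k>K}m_k\to0$. Here I would test $E_{s,g}[\mu]$ with a ball $B$ of volume $\mu$: using $g\in L^1_{loc}(\mathR^N)$ exactly as in the first remark of Section \ref{sectionMainResults} one gets $V_g(B)\le C_2\,\mu$ for $\mu$ small, while $P_s(B)=P_s(B_1)|B_1|^{-\frac{N-s}{N}}\mu^{\frac{N-s}{N}}$ by scaling, so $E_{s,g}[\mu]\le C_1\,\mu^{\frac{N-s}{N}}+C_2\,\mu$ for small $\mu$, with $C_1,C_2$ depending only on $N$, $s$, $g$; in particular $E_{s,g}[\mu_K]\to0$. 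Passing to the limit $K\to\infty$ in the last display gives $E_{s,g}[m]\le\widetilde{\capE}_{s,g}(\{E^k\}_k)$, and taking the infimum over admissible sequences, together with the trivial inequality above, completes the proof.
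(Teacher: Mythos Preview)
Your proof is correct and substantially simpler than the paper's. The paper argues the non-trivial inequality $E_{s,g}[m]\le\widetilde E_{s,g}[m]$ by working directly with a minimizing sequence $\{\{E^k_n\}_k\}_n$ for $\widetilde E_{s,g}[m]$: first it reduces each element to a \emph{finite} collection $\{\widetilde E^k_n\}_{k=1}^{K_n}$ via a scaling trick (enlarging one piece to absorb the tail mass while the isoperimetric inequality controls the cost), and then it explicitly glues these finitely many bounded sets together by translating them far apart and using $(\mathrm{g}4)$ to make the cross-interaction terms vanish. You bypass both steps by invoking Lemma~\ref{lemmaSubadditivityEnergy} iteratively, which already packages the gluing argument, together with the elementary bound $E_{s,g}[\mu]\le C_1\mu^{\frac{N-s}{N}}+C_2\mu$ to kill the tail. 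This is exactly the device the paper itself uses later (around \eqref{energyMinimalityEachPrticle}) to show each $G^i$ is a minimizer, so you are not introducing any new ingredient---just observing that the sub-additivity lemma already does all the work for Lemma~\ref{lemmaModifyMinimizingSeq}. The paper's route is more self-contained at this spot but redundant given what has already been established; yours is the cleaner argument.
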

\begin{proof}
	The proof of this lemma proceeds in a similar manner to the method in the proof of Theorem \ref{theoremExistMiniAnyVolumeFasterDecay}; however, it seems a little technical and thus we do not omit the detail. 
	
	First of all, we can readily observe that the inequality 
	\begin{equation}\nonumber
		\inf\left\{\capE_{s,g}(E) \mid |E|=m\right\} \geq  \inf\left\{\widetilde{\capE}_{s,g}(\{E^k\}_k) \mid \sum_{k=1}^{\infty} |E^k| = m \right\}
	\end{equation}
	holds true. Hence, it remains for us to prove the other inequality. To see this, we take any minimizing sequence $\{\{E^k_n\}_{k}\}_{n}$ for the generalized functional $\widetilde{\capE}_{s,g}$. Then it follows that, for any $\varepsilon>0$, there exists a number $n_0 \in \mathN$  such that 
	\begin{equation}\label{minimizingInequality}
		\widetilde{\capE}_{s,g}(\{E^k_n\}_k) \leq \widetilde{E}_{s,g}[m] + \varepsilon
	\end{equation} 
	for any $n \geq n_0$. Since the minimum is attained with a minimizing sequence of which each element is bounded, we may assume that $E^k_n$ is bounded for each $k,\,n\in\mathN$. In the sequel, we fix one $n \in \mathN$ with $n \geq n_0$ until we give another remark.
	
	First of all, we want to show that each element $\{E^k_n\}_{k}$ of the minimizing sequence $\{\{E^k_n\}_k\}_{n}$ can be chosen as an element of finitely many sets. More precisely, we will show that there exist a number $K_n\in\mathN$ and a sequence $\{\widetilde{E}^k_n\}_{k=1}^{K_n}$ such that the energy of $\{\widetilde{E}^k_n\}_{k=1}^{K_n}$ is smaller than that of $\{E^k_n\}_{k}$ and it converges to $\widetilde{E}_{s,g}[m]$ as $n \to \infty$. We set $m^{k}_n \coloneqq |E^{k}_n|$ for any $k \in \mathN$ and, since $\sum_{k=1}^{\infty} m^{k}_n = m <\infty$, we can observe that $m^{k}_n \to 0$ as $k \to \infty$ and, moreover, $\mu_{\ell} \coloneqq \sum_{k=\ell+1}^{\infty} m^{k}_n \to 0$ as $\ell \to \infty$. Then, we can choose $\widetilde{k} \in \mathN$ such that $m^{\widetilde{k}}_n \geq \frac{m}{2^{\widetilde{k}+1}}$. Indeed, if not, it follows that $m^k_n < \frac{m}{2^{k+1}}$ for any $k\in\mathN$. Then, we have that $m = \sum_{k=1}^{\infty}m^k < \sum_{k=1}^{\infty} \frac{m}{2^{k+1}} = \frac{m}{2}$, which is a contradiction. Now using the sets $\{E^{k}_n\}_{k=1}^{\infty}$, we construct a new family of sets $\{\widetilde{E}^{k}_n\}_{k=1}^{K_n}$ for some $K_n \in \mathN$, depending on $n$, in the following manner; we first choose $K_n \in \mathN$ so large that $K_n \geq \widetilde{k}$ and set $\widetilde{E}^{k}_n \coloneqq E^{k}_n$ for any $k \in \{1,\cdots ,\,K_n\}$ with $k \neq \widetilde{k}$ and $\widetilde{E}^{\widetilde{k}}_n \coloneqq \lambda_n\,E^{\widetilde{k}}_n$ where $\lambda^{N}_n \coloneqq \frac{m^{\widetilde{k}}_n + \mu_{K_n}}{m^{\widetilde{k}}_n}$. Then, we have the volume identity that
	\begin{equation}\label{minimizingSeqFinitenessExisGeneMini01}
		\sum_{k=1}^{K_n} \left| \widetilde{E}^{k}_n \right| = \sum_{k=1, \,k\neq\widetilde{k}}^{K_n} \left| E^{k}_n \right| + \lambda^N_n\, |E^{\widetilde{k}}_n| = \sum_{k=1,\,k\neq\widetilde{k}}^{K_n} m^{k}_n + m^{\widetilde{k}}_n + \mu_{K_n} = m.
	\end{equation}
	Now we compute the energy for $\{\widetilde{E}^{k}_n\}_{k=1}^{K_n}$ in order to show that the total energy of each elements of $\{\widetilde{E}^{k}_n\}_{k=1}^{K_n}$ is more efficient than that of the original sequence $\{E^{k}_n\}_{k=1}^{\infty}$. From the definition of $\lambda_n \geq 1$ and Lemma \ref{lemmaScalingEnergy}, we have that
	\begin{align}\label{minimizingSeqFinitenessExisGeneMini02}
		\sum_{k=1}^{K_n} \capE_{s,g}(\widetilde{E}^{k}_n) &\leq \sum_{k=1,\,k\neq\widetilde{k}}^{K_n} \capE_{s,g}(E^{k}_n) + \lambda^{2N}_n\,\capE_{s,g}(E^{\widetilde{k}}_n) \nonumber\\
		&= \sum_{k=1}^{\infty} \capE_{s,g}(E^{k}_n) + \left(\lambda^{2N}_n - 1\right)\,\capE_{s,g}(E^{\widetilde{k}}_n) - \sum_{k=K_n+1}^{\infty}\capE_{s,g}(E^{k}_n) \nonumber\\
		&\leq \sum_{k=1}^{\infty} \capE_{s,g}(E^{k}_n) + \frac{2^{\widetilde{k}+1}\,E_{s,g}[m]}{m}\,\mu_{K_n} - \sum_{k=K_n+1}^{\infty} P_s(E^k_n).
	\end{align}
	Here, in the last inequality, we have also used \eqref{minimizingSeqFinitenessExisGeneMini01}. From the isoperimetric inequality of $P_s$, \eqref{minimizingSeqFinitenessExisGeneMini02}, and \eqref{minimizingInequality}, we further obtain that
	\begin{align}
		\sum_{k=1}^{K_n} \capE_{s,g}(\widetilde{E}^{k}_n) &\leq \sum_{k=1}^{\infty} \capE_{s,g}(E^k_n) + \frac{2^{\widetilde{k}+1}\,E_{s,g}[m]}{m}\, \mu_{K_n} - C\sum_{k=K_n+1}^{\infty} \left(m^{k}_n\right)^{\frac{N-s}{N}}  \nonumber\\
		&\leq \sum_{k=1}^{\infty} \capE_{s,g}(E^k_n) + \frac{2^{\widetilde{k}+1}\,E_{s,g}[m]}{m}\, \mu_{K_n} - C \left(\sum_{k=K_n+1}^{\infty} m^{k}_n\right)^{\frac{N-s}{N}} \nonumber\\
		&= \sum_{k=1}^{\infty} \capE_{s,g}(E^k_n) + \frac{2^{\widetilde{k}+1}\,E_{s,g}[m]}{m}\, \mu_{K_n} - C \left( \mu_{K_n} \right)^{\frac{N-s}{N}}. \nonumber
	\end{align}
	Recalling the vanishing property of $\mu_{\ell}$ as $\ell \to \infty$ and taking the number $K_n$ so large that $K_n \geq \widetilde{k}$ and 
	\begin{equation}\nonumber
		\frac{2^{\widetilde{k}+1}\,E_{s,g}[m]}{m}\, \mu_{K_n} - C \left( \mu_{K_n} \right)^{\frac{N-s}{N}} < 0,
	\end{equation}
	then we finally obtain, from \eqref{minimizingInequality} and \eqref{minimizingSeqFinitenessExisGeneMini01}, that 
	\begin{equation}\nonumber
		\widetilde{E}_{s,g}[m] \leq \sum_{k=1}^{K_n} \capE_{s,g}(\widetilde{E}^{k}_n) < \sum_{k=1}^{\infty} \capE_{s,g}(E^k_n) \leq \widetilde{E}_{s,g}[m] + \varepsilon
	\end{equation}
	for large $n\in\mathN$. Finally letting $n \to \infty$, we obtain that the sum $\sum_{k=1}^{K_n} \capE_{s,g}(\widetilde{E}^{k}_n)$ converges to $\widetilde{E}_{s,g}[m]$ as $n \to \infty$. This completes the proof of the claim.
	
	Now we are prepared to prove the other inequality of the claim by using assumption $(\mathrm{g}4)$ saying that $g$ vanishes at infinity. From the fact that $\sum_{k=1}^{K_n}|E^k_n| = m$, we can choose one $k'\in\mathN$ with $|E^{k'}_n| > 0$. Since we have assumed that the sets $\{E^k_n\}_{k=1}^{K_n}$ are bounded, we can choose the points $\{z^{k}_n\}_{k=1,\,k \neq k'}^{K_n}$ such that each set $E^k_n + R\,z^{k}_n$ is far away from the others for sufficiently large $R>1$. We can thus compute the energy as follows; from the translation invariance of $\capE_{s,g}$, it holds that
	\begin{align}
		\sum_{k=1}^{K_n} \capE_{s,g}(E^k_n) &= \sum_{k=1,\,k\neq k', \ell}^{K_n} \capE_{s,g}(E^k_n) + \capE_{s,g}(E^{k'}_n) + \capE_{s,g}(E^{\ell}_n) \nonumber\\
		&= \sum_{k=1,\,k\neq k', \ell}^{K_n} \capE_{s,g}(E^k_n) + \capE_{s,g}(E^{k'}_n) + \capE_{s,g}(E^{\ell}_n + R\,z^{\ell}_n) \nonumber\\
		&= \sum_{k=1,\,k\neq k', \ell}^{K_n} \capE_{s,g}(E^k_n) + \capE_{s,g}(E^{k'}_n \cup (E^{\ell}_n + R\,z^{\ell}_n)) \nonumber\\
		&\qquad + 2\int_{E^{k'}_n} \int_{E^{\ell}_n + R\,z^{\ell}_n} \frac{dx\,dy}{|x-y|^{N+s}} - 2\int_{E^{k'}_n} \int_{E^{\ell}_n + R\,z^{\ell}_n}g(x-y)\,dx\,dy \nonumber
	\end{align}
	for any $\ell \in \{1,\cdots,\,K_n\}$ with $\ell \neq k'$ and sufficiently large $R>1$. Thus, we obtain that
	\begin{align}\nonumber
		\sum_{k=1,\,k\neq k', \ell}^{K_n} \capE_{s,g}(E^k_n) + \capE_{s,g}(E^{k'}_n \cup (E^{\ell}_n + R\,z^{\ell}_n)) &\leq \sum_{k=1}^{K_n} \capE_{s,g}(E^k_n) \nonumber\\
		&\qquad + 2\int_{E^{k'}_n} \int_{E^{\ell}_n + R\,z^{\ell}_n}g(x-y)\,dx\,dy
	\end{align} 
	for any $\ell \in \{1,\cdots,\,K_n\}$ with $\ell \neq k'$ and sufficiently large $R>1$. By repeating the same argument finite times for the rest of the sets $\{E^k_n\}_{k=1,\,k \neq k',\ell}^{K_n}$ with sufficiently large $R>1$ and from the translation invariance of $\capE_{s,g}$, we can derive the inequality
	\begin{align}\label{keyEstimateReductionOneElementGeneMini02}
		\capE_{s,g}\left( E^{k'}_n \cup \bigcup_{k=1,\,k\neq k'}^{K_n}\left( E^k_n + R\,z^{k}_n \right) \right) &\leq \sum_{k=1}^{K_n} \capE_{s,g}(E^k_n) \nonumber\\
		&\qquad + 2\sum_{k=1}^{K_n-1}\sum_{\ell=k+1}^{K_n} \int_{F^k_n(R)} \int_{F^{\ell}_n(R)} g(x-y)\,dx\,dy
	\end{align}
	where we define the sets $\{F^k_n(R)\}_{k=1}^{K_n}$ in such a way that $F^k_n(R) \coloneqq E^{k}_n + R\,z^{k}_n$ if $k \neq k'$ and $F^{k'}_n(R) \coloneqq E^{k'}_n$. Note that the sets $\{F^k_n(R)\}_{k=1}^{K_n}$ satisfy
	\begin{equation}\label{mutuallyFarAway}
		\dist(F^k_n(R), \,F^{\ell}_n(R)) \xrightarrow[R \to \infty]{} \infty
	\end{equation}
	for any $k, \, \ell \in \{1,\cdots,\,K_n\}$ with $k \neq \ell$. Since $\sum_{k=1}^{K_n}|E^k_n| = m$ and $E^{k'}_n \cup \bigcup_{k=1,\,k\neq k'}^{K_n}\left( E^k_n + R\,z^{i_p} \right)$ are the union of disjoint sets, we have that 
	\begin{equation}\nonumber
		\left| E^{k'}_n \cup \bigcup_{k=1,\,k\neq k'}^{K_n}\left( E^k_n + R\,z^{i_p} \right)\right| = \sum_{k=1}^{K_n} |E^{k}_n| = m.
	\end{equation}
	Thus, from \eqref{minimizingInequality} and \eqref{keyEstimateReductionOneElementGeneMini02}, we obtain
	\begin{align}\label{keyEstimateReductionOneElementGeneMini03}
		E_{s,g}[m] &\leq \capE_{s,g}\left( E^{k'}_n \cup \bigcup_{k=1,\,k\neq k'}^{K_n}\left( E^k_n + R\,z^{k}_n \right) \right) \nonumber\\
		&\leq \sum_{k=1}^{K_n} \capE_{s,g}(E^k_n) \nonumber\\
		&\qquad  + 2\sum_{k=1}^{K_n}\sum_{\ell=k+1}^{K_n} \int_{F^k_n(R)} \int_{F^{\ell}_n(R)} g(x-y)\,dx\,dy \nonumber\\
		&\leq \widetilde{E}_{s,g}[m] + \varepsilon \nonumber\\
		&\qquad + 2\sum_{k=1}^{K_n}\sum_{\ell=k+1}^{K_n} \int_{F^k_n(R)} \int_{F^{\ell}_n(R)} g(x-y)\,dx\,dy
	\end{align}
	Hence, if we show that the last term of the right-hand side in \eqref{keyEstimateReductionOneElementGeneMini03} converges to zero as $R \to \infty$ for each $n \geq n_0$, then we conclude that the inequality 
	\begin{equation}\nonumber
		\inf\left\{\capE_{s,g}(E) \mid |E|=m \right\} = E_{s,g}[m] \leq \widetilde{E}_{s,g}[m] =  \inf\left\{\widetilde{\capE}_{s,g}(\{E^k\}_k) \mid \sum_{k=1}^{\infty} |E^k| = m \right\}
	\end{equation}
	holds and this completes the proof of the lemma. To conclude the proof of the lemma, it is sufficient to show that, under assumption $(\mathrm{g}4)$, the convergence
	\begin{equation}\nonumber
		\sum_{k=1}^{K_n}\sum_{\ell=k+1}^{K_n} \int_{F^k_n(R)} \int_{F^{\ell}_n(R)} g(x-y)\,dx\,dy \xrightarrow[R \to \infty]{} 0 
	\end{equation}
	holds for each $n \geq n_0$. We fix $n \geq n_0$. From assumption $(\mathrm{g}4)$, we have that, for any $\varepsilon>0$, there exists a constant $R(\varepsilon)>0$ such that $g(z) < \varepsilon$ for any $|z| \geq R(\varepsilon)$. On the other hand, from \eqref{mutuallyFarAway}, we can also choose a constant $R'(\varepsilon)>0$ such that $|x-y| \geq R(\varepsilon)$ for any $R>R'(\varepsilon)$, $x \in F^k_n(R)$, $y \in F^{\ell}_n(R)$, and $k,\,\ell \in \{1,\cdots,\,K_n\}$ with $k \neq \ell$. Thus, taking these into account, we obtain that, for any $R > R'(\varepsilon)$,
	\begin{equation}\nonumber
		\sum_{k=1}^{K_n}\sum_{\ell=k+1}^{K_n} \int_{F^k_n(R)} \int_{F^{\ell}_n(R)} g(x-y)\,dx\,dy < \varepsilon \,\sum_{k=1}^{K_n}|F^k_n(R)| \sum_{\ell=1}^{K_n} |F^\ell_n(R)|.
	\end{equation}  
	Recalling the definition of the sets $\{F^k_n(R)\}_{k}$, we have that $\sum_{k=1}^{K_n}|F^k_n(R)| \leq m$. Therefore, we obtain that
	\begin{equation}\nonumber
		\sum_{k=1}^{K_n}\sum_{\ell=k+1}^{K_n} \int_{F^k_n(R)} \int_{F^{\ell}_n(R)} g(x-y)\,dx\,dy < m^2\,\varepsilon
	\end{equation}
	for any $R > R'(\varepsilon)$ and this completes the proof of the claim.

\end{proof}

Now we prove Theorem \ref{theoremExistGeneralizedMiniAnyVolume}, namely, the existence of generalized minimizers of $\widetilde{\capE}_{s,g}$ under the assumptions $(\mathrm{g}1)$, $(\mathrm{g}2)$, and $(\mathrm{g}4)$ in Section \ref{sectionPreliminary}. 
\begin{proof}[Proof of Theorem \ref{theoremExistGeneralizedMiniAnyVolume}]
	Let $m>0$. Thanks to Lemma \ref{lemmaModifyMinimizingSeq}, it is sufficient to take any sequence $\{E_n\}_{n\in\mathN}$ such that $|E_n| = m$ for any $n\in\mathN$ and
	\begin{equation}\label{minimizingSequenceGeneMini}
		\lim_{n \to \infty} \capE_{s,g}(E_n) = \widetilde{E}_{s,g}[m]
	\end{equation}
	instead of taking a minimizing sequence for $\widetilde{E}_{s,g}[m]$. Thus, we can now apply the same argument as in the proof of the existence of minimizers of $\capE_{s,g}$ in Theorem \ref{theoremExistMiniAnyVolumeFasterDecay} because we assume that $g$ satisfies the assumptions $(\mathrm{g}1)$ and $(\mathrm{g}2)$. This enables us to choose a sequence of a finite number of measurable sets $\{G^i\}_{i=1}^H$ with $H \in \mathN$ such that
	\begin{equation}\label{limitMinimizerManyComponents}
		\sum_{i=1}^{H}\capE_{s,g}(G^i) \leq \liminf_{n \to \infty}\capE_{s,g}(E_n), \quad \sum_{i=1}^{H} |G^i| = m.
	\end{equation} 
	Moreover, each set $G^i$ is a minimizer of $\capE_{s,g}$ among sets of volume $|G^i|$. Therefore, from \eqref{minimizingSequenceGeneMini} and \eqref{limitMinimizerManyComponents}, we conclude that the sequence $\{G^i\}_{i=1}^H$ is a generalized minimizer of $\widetilde{\capE}_{s,g}$ with $\sum_{i=1}^{H}|G^i| = m$ as we desired.
\end{proof}

\section{Asymptotic behavior of minimizers for large volumes}\label{sectionAsymptoticMiniLargeVol}
In this section, we study the asymptotic behavior of minimizers of $\capE_{s,g}$ with large volumes under the assumption that the kernel $g$ decays sufficiently fast. To see this, we first prove the $\Gamma$-convergence in $L^1$-topology of the functional associated with Problem \eqref{minimizationScalingModifiedProbelm} to the fractional perimeter $P_s$ as $m$ goes to infinity. Since it is well-known that a sequence of minimizers for a functional converges to a minimizer of its $\Gamma$-limit, we can derive the convergence of a sequence of the minimizers to the unit ball, by rescaling, up to translations.

\subsection{$\Gamma$-convergence of $\widehat{\capE}^{\lambda}_{s,g}$ to the fractional perimeter as $\lambda \to \infty$}
Before proving Theorem \ref{theoremAsympMiniLargeVolume}, we establish the $\Gamma$-convergence result for the energy $\capE^{\lambda}_{s,g}$ under the assumption that the kernel $g$ decays sufficiently fast. Before stating the claim, we give several notations and the definition of the functional $\capF^{\lambda}_{s,g}$ on $L^1(\mathR^N)$. First, we recall the definition of the $s$-fractional Sobolev semi-norm $[f]_{W^{s,1}}(\mathR^N)$ as follows:
\begin{equation}\nonumber
	[f]_{W^{s,1}}(\mathR^N) = \frac{1}{2}\int_{\mathR^N} \int_{\mathR^N}\frac{|f(x) - f(y)|}{|x-y|^{N+s}}\,dx\,dy
\end{equation}
for $f \in L^1$. Note that $[\chi_E]_{W^{s,1}}(\mathR^N) = P_s(E)$ for any measurable set $E \subset \mathR^N$. As is shown in \cite[Proposition 4.2 and Corollary 4.4]{BLP}, it follows that any integrable function of bounded variation is also finite with respect to the fractional semi-norm $[\cdot]_{W^{s,1}}$. Secondly, in order to study the $\Gamma$-convergence of the sequence  $\{\widehat{\capE}^{\lambda}_{s,g}\}_{\lambda>1}$ given in Proposition \ref{propositionRescaledProblem}, we define the functional $\widehat{\capF}^{\lambda_n}_{s,g}$ as
\begin{align}\label{functionalGammaConv}
	\widehat{\capF}^{\lambda_n}_{s,g}(f) \coloneqq 
	\left\{
	\begin{array}{ll}
		 [f]_{W^{s,1}}(\mathR^N) - & \displaystyle  \frac{1}{2}\int_{\mathR^N}\int_{\mathR^N} |f(x) - f(y)|\, g_{\lambda}(x-y) \,dx\,dy  \\
		 \\
		 & \text{if $f=\chi_{F}$ for some bounded set $F \subset \mathR^N$ with $P_s(F) < \infty$,} \\
		 \\
		 +\infty \quad  &\text{otherwise.}
	\end{array}
	\right.
\end{align}
Note that the functional $\widehat{\capF}^{\lambda}_{s,g}(f)$ for any $\lambda>0$ is well-defined. Moreover, if $f =\chi_E$ for some bounded set $E$ with $P_s(E) < \infty$, then we can easily see that $\widehat{\capF}^{\lambda}_{s,g}(f) = \widehat{\capE}^{\lambda}_{s,g}(E)$.

Now we prove the $\Gamma$-convergence of the functional $\widehat{\capF}^{\lambda_n}_{s,g}$ to $\widehat{\capF}^{\infty}_{s}$ (we give the definition of $\widehat{\capF}^{\infty}_{s}$ in the following proposition) as $n \to \infty$ in the $L^1$-topology.
\begin{proposition}[$\Gamma$-convergence to the $s$-fractional semi-norm] \label{propositionGammaConvergenceNonlocalEnergy}
	Let $\{\lambda_n\}_{n\in\mathN}$ be any sequence of positive real numbers going to infinity as $n\to\infty$. Assume that the kernel $g : \mathR^N \setminus \{0\} \to \mathR$ is radially symmetric and satisfies the assumptions $(\mathrm{g}1)$, $(\mathrm{g}2)$, and $(\mathrm{g}5)$ in Section \ref{sectionPreliminary}. Then the sequence $\{\widehat{\capF}^{\lambda_n}_{s,g}\}_{n\in\mathN}$ $\Gamma$-converges, with respect to $L^1$-topology, to the functional $\widehat{\capF}^{\infty}_{s}$ defined by
	\begin{align}
		\widehat{\capF}^{\infty}_{s}(f) \coloneqq 
		\left\{
		\begin{array}{ll}
			\displaystyle [f]_{W^{s,1}}(\mathR^N) \quad &\text{if $f=\chi_{F}$ for some bounded $F \subset \mathR^N$ with $P_s(F) < \infty$,} \nonumber\\
			\\
			+\infty \quad &\text{otherwise.} \nonumber
		\end{array}
		\right.
	\end{align}
\end{proposition}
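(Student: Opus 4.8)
The plan is to verify directly the two inequalities in De Giorgi's definition of $\Gamma$-convergence for the $L^1(\mathR^N)$-topology, the whole argument resting on one elementary fact about the rescaled kernels. Set $w_n(z) \coloneqq |z|^{-(N+s)} - g_{\lambda_n}(z)$. Assumption $(\mathrm{g}5)$ gives \emph{simultaneously} two things: the uniform control $0 \le g_{\lambda_n}(z) \le \gamma\,|z|^{-(N+s)}$ for every $n$ and every $z\neq0$ (from $g(z)\le\gamma|z|^{-(N+s)}$), so that $w_n(z) \ge (1-\gamma)|z|^{-(N+s)} \ge 0$; and the pointwise decay $g_{\lambda_n}(z)\to0$, equivalently $w_n(z)\to|z|^{-(N+s)}$, for each fixed $z\neq0$. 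The latter is just the identity $g_{\lambda_n}(z) = |z|^{-(N+s)}\,(\lambda_n|z|)^{N+s}\,g(\lambda_n z)$ together with $g(w)=o(|w|^{-(N+s)})$ as $|w|\to\infty$ and $\lambda_n\to\infty$. On a bounded set $F$ with $P_s(F)<\infty$ one has $\widehat{\capF}^{\lambda_n}_{s,g}(\chi_F) = \frac{1}{2}\int_{\mathR^N}\int_{\mathR^N} |\chi_F(x)-\chi_F(y)|\,w_n(x-y)\,dx\,dy$, i.e. an integral of nonnegative integrands against the strictly positive weight $w_n$; in particular the perturbation subtracted from $[\chi_F]_{W^{s,1}}(\mathR^N)$ always lies in $[0,\gamma\,P_s(F)]$. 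I will also use that $\widehat{\capF}^{\lambda_n}_{s,g}$ and $\widehat{\capF}^{\infty}_{s}$ share the same domain of finiteness (characteristic functions of bounded sets with finite $P_s$).

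For the $\limsup$ inequality I would take the constant recovery sequence $f_n\equiv f$. If $\widehat{\capF}^{\infty}_{s}(f)=+\infty$ there is nothing to prove; otherwise $f=\chi_F$ with $F$ bounded and $P_s(F)<\infty$, and non-negativity of $g_{\lambda_n}$ (assumption $(\mathrm{g}1)$) already gives $\widehat{\capF}^{\lambda_n}_{s,g}(f) \le [f]_{W^{s,1}}(\mathR^N) = \widehat{\capF}^{\infty}_{s}(f)$. In fact dominated convergence, with the integrable majorant $\gamma\,|f(x)-f(y)|\,|x-y|^{-(N+s)}$ and the pointwise vanishing of $g_{\lambda_n}$, shows the perturbation tends to $0$, so $f_n\equiv f$ is a genuine recovery sequence.

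For the $\liminf$ inequality, let $f_n\to f$ in $L^1(\mathR^N)$. I may assume $\liminf_n \widehat{\capF}^{\lambda_n}_{s,g}(f_n)<+\infty$ and, passing to a subsequence, that this $\liminf$ is attained as a finite limit. Then $f_n=\chi_{F_n}$ for large $n$ with $F_n$ bounded, and the coercive bound $\widehat{\capF}^{\lambda_n}_{s,g}(f_n)\ge (1-\gamma)P_s(F_n)$ forces $\sup_n P_s(F_n)<\infty$. Extracting once more so that $f_n\to f$ a.e., the limit $f$ is $\{0,1\}$-valued, hence $f=\chi_F$ for a measurable set $F$ with $P_s(F)<\infty$. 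Fatou's lemma applied to the nonnegative integrands $|f_n(x)-f_n(y)|\,w_n(x-y)$, using $|f_n(x)-f_n(y)|\to|f(x)-f(y)|$ a.e. and $w_n(x-y)\to|x-y|^{-(N+s)}$ for $x\neq y$, then gives $\liminf_n \widehat{\capF}^{\lambda_n}_{s,g}(f_n) \ge \frac{1}{2}\int_{\mathR^N}\int_{\mathR^N} |f(x)-f(y)|\,|x-y|^{-(N+s)}\,dx\,dy = [f]_{W^{s,1}}(\mathR^N) = \widehat{\capF}^{\infty}_{s}(f)$, which is the desired inequality.

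The only genuinely delicate point is the $\liminf$ inequality, and inside it the worry that the subtracted term $\int_{\mathR^N}\int_{\mathR^N} |f_n(x)-f_n(y)|\,g_{\lambda_n}(x-y)\,dx\,dy$ might, in the limit, absorb a non-negligible portion of $[f_n]_{W^{s,1}}(\mathR^N)$ --- for instance if $\partial F_n$ oscillates on the scale $\lambda_n^{-1}$ on which $g_{\lambda_n}$ concentrates. This is precisely what the strict inequality $\gamma<1$ in $(\mathrm{g}5)$ rules out: rewriting the energy as an integral against the strictly positive weight $w_n\ge(1-\gamma)|z|^{-(N+s)}$ both supplies the coercivity that bounds $P_s(F_n)$ and allows Fatou's lemma to convert the pointwise limit $w_n\to|z|^{-(N+s)}$ into the sharp lower bound without any loss of mass. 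Everything else is routine: the extraction of an a.e.-convergent subsequence (to see that the $L^1$-limit is again a characteristic function) and the identity $[\chi_E]_{W^{s,1}}(\mathR^N)=P_s(E)$; the regime $\lambda_n\to\infty$ enters only through $g_{\lambda_n}(z)=|z|^{-(N+s)}(\lambda_n|z|)^{N+s}g(\lambda_n z)\to0$.
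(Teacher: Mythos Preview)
Your proof is correct and follows the same overall plan as the paper (verify the $\limsup$ and $\liminf$ inequalities directly), with the $\limsup$ part handled identically via the constant recovery sequence. The genuine difference lies in the $\liminf$ part. The paper introduces, for each $\varepsilon\in(0,1)$, a radius $R_\varepsilon$ with $g(x)\le\varepsilon|x|^{-(N+s)}$ for $|x|\ge R_\varepsilon$, splits the double integral over $\{\lambda_n|x-y|<R_\varepsilon\}$ and its complement, and obtains
\[
\widehat{\capF}^{\lambda_n}_{s,g}(f_n)\;\ge\;\frac{1-\varepsilon}{2}\iint_{\{\lambda_n|x-y|\ge R_\varepsilon\}}\frac{|f_n(x)-f_n(y)|}{|x-y|^{N+s}}\,dx\,dy,
\]
then sends $n\to\infty$ (Fatou plus monotone convergence) and finally $\varepsilon\to0$. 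You instead package the whole functional as a single integral against the nonnegative kernel $w_n(z)=|z|^{-(N+s)}-g_{\lambda_n}(z)$, observe from $(\mathrm{g}5)$ that $w_n\ge(1-\gamma)|z|^{-(N+s)}$ and $w_n(z)\to|z|^{-(N+s)}$ pointwise, and apply Fatou once. This is cleaner: it avoids the auxiliary parameter $\varepsilon$ and the domain splitting, and it makes transparent why the strict inequality $\gamma<1$ is exactly what is needed (it keeps $w_n$ nonnegative and supplies the coercivity $\widehat{\capF}^{\lambda_n}_{s,g}\ge(1-\gamma)P_s$). The paper's splitting, on the other hand, is slightly more robust in that it uses only the qualitative decay $g(x)=o(|x|^{-(N+s)})$ for the main estimate and would adapt more readily to situations where one does not have a global pointwise bound $g\le\gamma|z|^{-(N+s)}$ on all of $\mathR^N\setminus\{0\}$. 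One minor point present in both arguments: neither you nor the paper checks that the limit set $F$ is \emph{bounded}, which the definition of $\widehat{\capF}^{\infty}_{s}$ formally requires; this is harmless here since the functionals are applied only to minimizers, but strictly speaking the $\liminf$ inequality as stated needs that extra word.
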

\begin{remark}
	Recall that, in this paper, we assume that the kernel $g$ is locally integrable in $\mathR^N$, especially near the origin; however, Proposition \ref{propositionGammaConvergenceNonlocalEnergy} is still valid even if $g$ is not integrable in the ball centred at the origin. This is because the assumption that $g(x) \leq |x|^{-(N+s)}$ for $x \neq 0$ is sufficient enough for the functional \eqref{functionalGammaConv} to be well-defined.
\end{remark}
\begin{proof}
	We recall the definition of the $\Gamma$-convergence. We say that $\{\widehat{\capF}^{\lambda_n}_{s,g}\}_{n\in\mathN}$ $\Gamma$-converges to $\widehat{\capF}^{\infty}_{s}$ with respect to $L^1$-topology if the two estimates hold
	\begin{equation}\nonumber
		\Gamma_{L^1-}\limsup_{n\to\infty}\widehat{\capF}^{\lambda_n}_{s,g}(f) \leq \widehat{\capF}^{\infty}_{s}(f), \quad \widehat{\capF}^{\infty}_{s}(f) \leq \Gamma_{L^1-}\liminf_{n\to\infty}\widehat{\capF}^{\lambda_n}_{s,g}(f)
	\end{equation}
	for any $f \in L^1(\mathR^N)$, where we set
	\begin{equation}\label{defgammaConveSup}
		\Gamma_{L^1-}\limsup_{n\to\infty}\widehat{\capF}^{\lambda_n}_{s,g}(f) \coloneqq \min\left\{\limsup_{n\to\infty} \widehat{\capF}^{\lambda_n}_{s,g}(f_n) \mid \text{$f_n \xrightarrow[n\to\infty]{} f$ in $L^1(\mathR^N)$} \right\}
	\end{equation}
	and
	\begin{equation}\label{defgammaConveInf}
		\Gamma_{L^1-}\liminf_{n\to\infty}\widehat{\capF}^{\lambda_n}_{s,g}(f) \coloneqq \min\left\{\liminf_{n\to\infty} \widehat{\capF}^{\lambda_n}_{s,g}(f_n) \mid \text{$f_n \xrightarrow[n\to\infty]{} f$ in $L^1(\mathR^N)$} \right\}.
	\end{equation}
	Note that the minimum in \eqref{defgammaConveSup} and \eqref{defgammaConveInf} is attained by the diagonal argument.
	
	First of all, we prove that $\Gamma_{L^1-}\limsup_{n\to\infty}\widehat{\capF}^{\lambda_n}_{s,g}(f) \leq \widehat{\capF}^{\infty}_{s}(f)$ for any $f \in L^1(\mathR^N)$. In the case that $f$ is not a characteristic function of some bounded set with a finite nonlocal perimeter, we obviously have that $\widehat{\capF}^{\infty}_{s}(f) = \infty$ and the inequality holds. Thus, we may assume that $f = \chi_F$ for a bounded set $F \subset \mathR^N$ with $P_s(F) < \infty$. Setting a sequence $\{f_n\}_{n\in\mathN}$ as $f_n = f = \chi_F$ for any $n\in\mathN$, we obtain, from the non-negativity of $g$, that
	\begin{equation}\nonumber
		\widehat{\capF}^{\lambda_n}_{s,g}(f_n) \leq \widehat{\capF}^{\infty}_{s}(f)
	\end{equation}
	for any $n\in\mathN$ and thus, it follows that $\Gamma_{L^1-}\limsup_{n\to\infty}\widehat{\capF}^{\lambda_n}_{s,g}(f) \leq \widehat{\capF}^{\infty}_{s}(f)$.
	
	Next we prove that $\widehat{\capF}^{\infty}_{s}(f) \leq \Gamma_{L^1-}\liminf_{n\to\infty}\widehat{\capF}^{\lambda_n}_{s,g}(f)$ for any $f \in L^1(\mathR^N)$. We take any sequence $\{f_n\}_{n\in\mathN} \subset L^1(\mathR^N)$ such that $f_n \to f$ in $L^1$ as $n \to \infty$. In the case that $f$ is not a characteristic function of some bounded set with a finite nonlocal perimeter, we claim that there exists a number $n_0 \in \mathN$ such that $f_n$ is also not a characteristic function of a measurable set for any $n \geq n_0$. Indeed, we suppose by contradiction that there exists a subsequence $\{f_{n_k}\}_{k\in\mathN}$ such that $f_{n_k} = \chi_{F_{n_k}}$ for some measurable set $F_{n_k} \subset \mathR^N$ for any $k \in \mathN$. Since $f_{n_k} \to f$ in $L^1$ as $k \to \infty$ and $f_{n_k} \in \{0,\,1\}$ for any $k\in\mathN$, we obtain that $f \in \{0,\,1\}$ a.e. in $\mathR^N$ and $f$ can be written as $f = \chi_F$ for some measurable $F\ \subset \mathR^N$. This contradicts the assumption that $f$ is not a characteristic function. Hence, we conclude that, for large $n \in \mathN$, $\widehat{\capF}^{\lambda_n}_{s,g}(f_n) = \infty$ and the claim holds true. Thus, in the sequel, we may assume that $f = \chi_F$ for some bounded set $F \subset \mathR^N$ with $P_s(F) < \infty$.
	
	Under the above assumption, we first compute the second term of the functional $\widehat{\capF}^{\lambda_n}_{s,g}$ in \eqref{functionalGammaConv}. Let $\varepsilon \in (0,\,1)$. From assumption $(\mathrm{g}5)$, we can choose a constant $R_\varepsilon > 1$ such that $g(x) \leq \frac{\varepsilon}{|x|^{N+s}}$ for $|x| \geq R_\varepsilon$. Then, from the definition of $g_{\lambda_n}$ for any $n\in\mathN$, we have that
	\begin{align}\label{splittingSecondTermEnergy}
		&\int_{\mathR^N}\int_{\mathR^N} |f(x) - f(y)| \, g_{\lambda_n}(x-y) \,dx\,dy \nonumber\\
		&= \iint_{\{(x,\,y) \mid \, \lambda_n|x-y| < R_\varepsilon \}} |f(x) - f(y)| \, g_{\lambda_n}(x-y) \,dx\,dy \nonumber\\
		&\qquad + \iint_{\{(x,\,y) \mid \, \lambda_n|x-y| \geq R_\varepsilon \}} |f(x) - f(y)| \, g_{\lambda_n}(x-y) \,dx\,dy \nonumber\\ 
		&\leq  \iint_{\{(x,\,y) \mid \, \lambda_n|x-y| < R_\varepsilon \}} \frac{|f(x) - f(y)|}{|x-y|^{N+s}} \,dx\,dy \nonumber\\ 
		&\qquad + \varepsilon \iint_{\{(x,\,y) \mid \, \lambda_n|x-y| \geq R_\varepsilon \}} \frac{|f(x) - f(y)|}{|x-y|^{N+s}} \,dx\,dy
	\end{align}
	for any $n \in \mathN$. Thus, from the definition of $\widehat{\capE}^{\lambda_n}_{s,g}$ and \eqref{splittingSecondTermEnergy}, we can obtain
	\begin{align}\label{splittingSecondTermEnergy02}
		\widehat{\capF}^{\lambda_n}_{s,g}(f_n) &\geq  [f_n]_{W^{s,1}}(\mathR^N) -  \frac{1}{2}\iint_{\{(x,\,y) \mid \, \lambda_n|x-y| < R_\varepsilon \}} \frac{|f_n(x) - f_n(y)|}{|x-y|^{N+s}} \,dx\,dy \nonumber\\ 
		&\qquad - \frac{\varepsilon}{2}\iint_{\{(x,\,y) \mid \, \lambda_n|x-y| \geq R_\varepsilon \}} \frac{|f_n(x) - f_n(y)|}{|x-y|^{N+s}} \,dx\,dy \nonumber\\
		&\geq \frac{1-\varepsilon}{2}\iint_{\{(x,\,y) \mid \, \lambda_n|x-y| \geq R_\varepsilon \}} \frac{|f_n(x) - f_n(y)|}{|x-y|^{N+s}} \,dx\,dy 
	\end{align} 
	for any $\varepsilon \in (0,\,1)$ and $n\in\mathN$. Thus, letting first $n \to \infty$ and then $\varepsilon \to 0$ with Fatou's lemma and the monotone convergence theorem, we finally obtain
	\begin{align}
		\liminf_{n \to \infty}\widehat{\capF}^{\lambda_n}_{s,g}(f_n) &\geq \limsup_{\varepsilon \to 0}\frac{1-\varepsilon}{2}\iint_{\mathR^N \times \mathR^N}  \frac{|f(x) - f(y)|}{|x-y|^{N+s}} \,dx\,dy \nonumber\\
		&= \frac{1}{2}\int_{\mathR^N}\int_{\mathR^N}\frac{|f(x) - f(y)|}{|x-y|^{N+s}} \,dx\,dy = [f]_{W^{s,1}}(\mathR^N). \nonumber
	\end{align} 
	Therefore, from the above arguments, we complete the proof. 
\end{proof}

\subsection{Convergence of minimizers of $\widehat{\capE}^{\lambda}_{s,g}$ to the ball as $\lambda \to \infty$}
Now we are prepared to prove Theorem \ref{theoremAsympMiniLargeVolume}. In this theorem, we impose on $g$ the assumptions that $g$ is radially symmetric and decays sufficiently fast. One important difference between the assumptions on $g$ of Theorem \ref{theoremExistMiniAnyVolumeFasterDecay} and \ref{theoremAsympMiniLargeVolume} is that the decay of the kernel $g$ in Theorem \ref{theoremAsympMiniLargeVolume} is strictly faster than that in Theorem \ref{theoremExistMiniAnyVolumeFasterDecay}. To observe the convergence, we consider Problem \ref{minimizationScalingModifiedProbelm} and finally we take the limit $\lambda \to \infty$ instead of Problem \ref{minimizationGeneralizedFunctional} with the limit $m \to \infty$ for convenience.

The strategy for the proof of the asymptotic behavior of the minimizers is as follows; in contrast to the idea for studying the asymptotic behavior of minimizers, for instance, in \cite{FFMMM, Pegon, Carazzato}, we may not be able to employ directly the quantitative isoperimetric inequality for the nonlocal perimeter $P_s$ to use a Fuglede-type argument. This is because we may have the volume of the symmetric difference between minimizers and the balls naively bounded by the volume and perimeter of the symmetric difference. Since we are dealing with minimizers with large volumes, it is not obvious that the bound of the symmetric difference could give us the $L^1$-convergence of minimizers to the ball. Therefore, we adopt another strategy in the following way; we first take any sequence $\{F_n\}_{n}$ of the minimizers for $\widehat{\capE}^{\lambda_n}_{s,g}$ with $|F_n|=|B_1|$. Then we apply so-called ``concentration-compactness" lemma that we use to show the existence of minimizers in Section \ref{sectionExisMiniFastDecayGAnyVol} and \ref{sectionExistGeneMiniAnyVol}. As a consequence of the lemma, we can choose a sequence of sets $\{G^i\}_{i}$ and points $\{z_n^i\}_{i,n}$ such that, up to extracting a subsequence, 
\begin{equation}\label{keyInfoAsymptoticLargeMini}
	\sum_{i}P_s(G^i) \leq \liminf_{n \to \infty}\widehat{\capE}^{\lambda_n}_{s,g}(F_n), \quad F_n - z_n^i \xrightarrow[n \to \infty]{} G^i \quad \text{in $L^1_{loc}$}, \quad \sum_{i}|G^i| = |B_1|
\end{equation}
thanks to the assumptions on $g$. Then, from the isoperimetric inequality of $P_s$ and the minimality of $F_n$, we can actually obtain that each $G^i$ coincides with the Euclidean ball, up to translations and negligible sets, whenever $|G^i|>0$. Finally, from \eqref{keyInfoAsymptoticLargeMini}, we can show that the only one element in $\{G^i\}_{i}$ has a positive volume and its volume is equal to $|B_1|$. From Brezis-Lieb lemma, the convergence in \eqref{keyInfoAsymptoticLargeMini} is improved to the $L^1$-convergence. Combining the $\Gamma$-convergence result, we conclude the proof.  

\begin{proof}[Proof of Theorem \ref{theoremAsympMiniLargeVolume}]
	Let $\{\lambda_n\}_{n\in\mathN}$ be any sequence going to infinity as $n \in \mathN$ and we take any sequence $\{F_n\}_{n\in\mathN}$ of the minimizers for $\widehat{\capE}^{\lambda_n}_{s,g}$ with $|F_n| = |B_1|$ for any $n \in \mathN$. From the assumption $(\mathrm{g}5)$, we can choose a constant $\gamma \in (0,\,1)$ such that $g_{\lambda_n}(x) \leq \gamma|x|^{-(N+s)}$ for any $|x| \neq 0$. From the minimality of $F_n$ for each $n\in\mathN$, we have that 
	\begin{equation}\nonumber
		P_s(F_n) \leq P_s(B_1) + P_{g_{\lambda_n}}(F_n) = P_s(B_1) + \gamma \, P_s(F_n)
	\end{equation}
	for any $n \in \mathN$ and thus, we obtain that $\{P_s(F_n)\}_{n}$ is uniformly bounded with respect to $n$, namely, $\sup_{n\in\mathN}P_s(F_n) \leq (1-\gamma)^{-1}P_s(B_1) < \infty$. As a consequence of the uniform bound of $\{P_s(F_n)\}_{n}$, we can now apply the same method as in the proof of Theorem \ref{theoremExistMiniAnyVolumeFasterDecay} (see also \cite{dCNRV}) to the sequence $\{F_n\}_{n}$. Although we discuss in the proof of Theorem \ref{theoremExistMiniAnyVolumeFasterDecay}, we rewrite the argument in the sequel for convenience. 
	
	First of all, we decompose $\mathR^N$ into the unit cubes and denote by $\{Q_n^i\}_{i=1}^{\infty}$. We set $x_n^i \coloneqq |F_n \cap Q_n^i|$ and have that
	\begin{equation}\label{keyTechnicalAsymptotic01}
		\sum_{i=1}^{\infty} x_n^i = |F_n| = |B_1|
	\end{equation} 
	for any $n\in\mathN$. Moreover, from the isoperimetric inequality shown in \cite[Lemma 2.5]{dCNRV}, we obtain
	\begin{equation}\label{keyTechnicalAsymptotic02}
		\sum_{i=1}^{\infty} (x_n^i)^{\frac{N-s}{N}} \leq C\sum_{i=1}^{\infty}P_s(F_n; Q_n^i) \leq 2CP_s(F_n) \leq C_1 < \infty
	\end{equation}
	for any $n\in\mathN$, where $C$ and $C_1$ are the positive constants independent of $n$. Up to reordering the cubes $\{Q_n^i\}_{i}$, we may assume that $\{x_n^i\}_{i}$ is a non-increasing sequence for any $n\in\mathN$. Thus, applying the technical result shown in \cite[Lemma 4.2]{GoNo} or \cite[Lemma 7.4]{dCNRV} with \eqref{keyTechnicalAsymptotic01} and \eqref{keyTechnicalAsymptotic02}, we obtain that
	\begin{equation}\label{keyTechnicalAsymptotic03}
		\sum_{i=k+1}^{\infty} x_n^i \leq \frac{C_2}{k^{\frac{s}{N}}} 
	\end{equation}
	for any $k\in\mathN$ where $C_2$ is the positive constant independent of $n$ and $k$. Hence, by using the diagonal argument, we have that, up to extracting a subsequence, $x_n^i \to \alpha^i \in [0,\,|B_1|]$ as $n \to \infty$ for every $i\in\mathN$ and, from \eqref{keyTechnicalAsymptotic01} and \eqref{keyTechnicalAsymptotic03}, 
	\begin{equation}\label{identityLimitMeasuAsymptotic}
		\sum_{i=1}^{\infty} \alpha^i = |B_1|.
	\end{equation}
	Now we fix the centre of the cube $z_n^i \in Q_n^i$ for each $i$ and $n$. Up to extracting a further subsequence, we may assume that $|z_n^i - z_j^i| \to c^{ij} \in [0,\,\infty]$ as $n \to \infty$ for each $i,\,j \in \mathN$. As already seen in the above, we have the uniform bound of the sequence $\{P_s(F_n-z_n^i)\}_{n\in\mathN}$ and its upper-bound is independent of $i$ and thus, from the compactness, there exists a measurable set $G^i \subset \mathR^N$ such that, up to extracting a further subsequence, 
	\begin{equation}\nonumber
		\chi_{F_n-z_n^i} \xrightarrow[n \to \infty]{} \chi_{G^i} \quad \text{in $L^1_{loc}$-topology}.
	\end{equation}
	We define the relation $i \sim j$ for every $i,\,j\in\mathN$ as $c^{ij} < \infty$ and we denote by $[i]$ the equivalent class of $i$. Moreover, we define the set of the equivalent class by $\capI$. Then, by applying the same argument as in the proof of Theorem \ref{theoremExistMiniAnyVolumeFasterDecay}, it is easy to show that 
	\begin{equation}\nonumber
		\sum_{[i] \in \capI} |G^i| = |B_1|.
	\end{equation}

	As a first step, we want to show a sort of lower semi-continuity of the energy, more precisely, we will prove the following inequality;
	\begin{equation}\label{lowerSemicontiConcentrationAsymptotic}
		\sum_{[i] \in \capI}P_s(G^i) \leq \liminf_{n \to \infty}\widehat{\capE}^{\lambda_n}_{s,g}(F_n) = \liminf_{n \to \infty}\left(P_s(F_n) - P_{g_{\lambda_n}}(F_n) \right).
	\end{equation}
	Indeed, we first fix $M \in \mathN$ and $R>0$ and we take the equivalent classes $i_1,\cdots,\,i_M$. Notice that, if $p \neq q$, then $|z_n^{i_p} - z_n^{i_q}| \to \infty$ as $n\to\infty$ and thus we have that $\{z_n^{i_p}+Q_R\}_{p}$ are disjoint sets for large $n$ and 
	\begin{equation}\label{vanishingEnergyGeneralizedMini}
		\int_{z_n^{i_p} + Q_R}\int_{z_n^{i_q} + Q_R} \frac{1}{|x-y|^{N+s}}\,dx\,dy \xrightarrow[n \to \infty]{} 0
	\end{equation} 
	where $Q_R$ is the cube of side $R$. Then, by using a similar argument shown in the proof of the $\Gamma$-liminf inequality in Proposition \ref{propositionGammaConvergenceNonlocalEnergy} with \eqref{vanishingEnergyGeneralizedMini}, we can conduct the following similar argument: let $\varepsilon \in (0,\,1)$ and, from $(\mathrm{g}5)$, we can choose a constant $R_\varepsilon>1$ such that $g(x) \leq \frac{\varepsilon}{|x|^{N+s}}$ for any $|x| \geq R_\varepsilon$. Then it holds that
	\begin{align}\label{liminfInequalityAsymptotic}
		&\liminf_{n \to \infty}\left(P_s(F_n) - P_{g_{\lambda_n}}(F_n) \right) \nonumber\\
		&\geq (1-\varepsilon) \liminf_{n \to \infty}\left(\int_{F_n \cap A^{M,R}_n}\int_{F^c_n}\frac{\chi_{\{|x-y| \geq r_n^{\varepsilon}\}}(x,\,y)}{|x-y|^{N+s}} \,dx\,dy \right) \nonumber\\
		&\qquad + (1-\varepsilon) \liminf_{n \to \infty}\left(\int_{F_n \setminus A^{M,R}_n}\int_{A^{M,R}_n \setminus F_n}\frac{\chi_{\{|x-y| \geq r_n^{\varepsilon}\}}(x,\,y)}{|x-y|^{N+s}} \,dx\,dy \right) \nonumber\\
		&\qquad \quad + (1-\varepsilon)\liminf_{n \to \infty}2\sum_{p \neq q} \int_{z^{i_p}_n + Q_R}\int_{z^{i_q}_n + Q_R}\frac{\chi_{\{|x-y| \geq r_n^{\varepsilon}\}}(x,\,y)}{|x-y|^{N+s}}\,dx\,dy
	\end{align}
	for any $\varepsilon \in (0,\,1)$ where we set $r_n^{\varepsilon} \coloneqq \lambda^{-1}_n\,\varepsilon^{-\frac{1}{1-s}}$ for each $n$ and $A^{M,R}_n \coloneqq \cup_{p=1}^{M}(z^{i_p}_n + Q_R)$. Now we recall the inequality of double integrals;
	\begin{align}\label{subadditivityNonlocalPeri}
		&P_s(E;A) + P_s(E;B) \leq P_s(E; A \sqcup B) + 2\int_{A}\int_{B}\frac{dx\,dy}{|x-y|^{N+s}}
	\end{align}
	for any measurable disjoint sets $A,\,B\subset \mathR^N$, where we define $P_s(E;A) \coloneqq \int_{E \cap A}\int_{E^c} + \int_{E \setminus A}\int_{A \setminus E}$ for measurable sets $E,\,A \subset \mathR^N$ (we omit the integrand for simplicity). Hence, from \eqref{liminfInequalityAsymptotic}, \eqref{subadditivityNonlocalPeri}, and the lower semi-continuity of $P_s$ in $L^1_{loc}$-topology with Fatou's lemma, we obtain
	\begin{align}
		&\liminf_{n \to \infty}\left(P_s(F_n) - P_{g_{\lambda_n}}(F_n) \right) \nonumber\\
		&\geq (1-\varepsilon)\liminf_{n \to \infty} \sum_{p=1}^{M} \left(\int_{F_n \cap (z^{i_p}_n + Q_R)} \int_{F_n^c} \frac{\chi_{\{|x-y|\geq r_n^{\varepsilon}\}}(x,\,y)}{|x-y|^{N+s}}\,dx\,dy \right. \nonumber\\
		&\qquad \left. + \int_{F_n \setminus (z^{i_p}_n + Q_R)} \int_{(z^{i_p}_n + Q_R) \setminus F_n} \frac{\chi_{\{|x-y|\geq r_n^{\varepsilon}\}}(x,\,y)}{|x-y|^{N+s}}\,dx\,dy \right) \nonumber\\
		&\geq (1-\varepsilon)\sum_{p=1}^{M} P_s(G^{i_p} ; Q_R) \nonumber
	\end{align}
	for any $\varepsilon \in (0,\,1)$. Letting $R \to \infty$, $M \to \infty$, and $\varepsilon \to 0$, we finally conclude that the inequality \eqref{lowerSemicontiConcentrationAsymptotic} holds true. Taking into account all of the above arguments, we obtain the existence of sets $\{G^i\}_{[i]\in\capI}$ satisfying the properties that
	\begin{equation}\label{keyPropertylowSemicontiVolumeEqualParticlesAsymptotic}
		\sum_{[i] \in \capI} P_s(G^i) \leq \liminf_{n \to \infty} \widehat{\capE}^{\lambda_n}_{s,g}(F_n), \quad \sum_{[i] \in \capI} |G^{i}| = |B_1|. 
	\end{equation}

	Secondly, we want to show that each $G^{i}$ actually coincides, up to translations and negligible sets, with the Euclidean ball with the volume $|G^{i}|$, whenever $|G^{i}|>0$. Indeed, we first set $B_i$ as the ball of radius $r_i \coloneqq |B_1|^{-1/N}\,|G^i|^{1/N}$ for each $[i] \in \capI$. Then, from \eqref{keyPropertylowSemicontiVolumeEqualParticlesAsymptotic} and the minimality of $F_n$, we have that
	\begin{align}\label{energyMinimalityEachPrticleAsymptotic}
		\sum_{[i] \in \capI} \left( P_s(G^{i}) - P_s(B_i) \right) &\leq \liminf_{n \to \infty}\widehat{\capE}^{\lambda_n}_{s,g}(F_n) -  \sum_{p=1}^{M} P_s(B_i) \nonumber\\
		&\leq P_s(B_1)  -  \sum_{[i] \in \capI} \left(\frac{|G^i|}{|B_1|}\right)^{\frac{N-s}{N}} P_s(B_1) \nonumber\\
		&\leq P_s(B_1)  - P_s(B_1)  \left(\sum_{[i]\in\capI}\frac{|G^i|}{|B_1|}\right)^{\frac{N-s}{N}}  = 0.
	\end{align}
	From the isoperimetric inequality of $P_s$, we know that $P_s(B_i) \leq P_s(G^i)$ for any $[i] \in \capI$ and the equality holds if and only if $G^i = B_i$ up to translation and negligible sets. Hence, from \eqref{energyMinimalityEachPrticleAsymptotic}, we conclude that the claim holds true.
	
	Next we show that the set $\capI$ of the equivalent classes is actually a finite set. Indeed, we first set $m^{i_p} \coloneqq |G^{i_p}|$ for any $p \in \mathN$ and, since $\sum_{p=1}^{\infty} m^{i_p} = |B_1|$, we can observe that $m^{i_p} \to 0$ as $p \to \infty$ and, moreover, $\mu_{\ell} \coloneqq \sum_{p=\ell+1}^{\infty} m^{i_p} \to 0$ as $\ell \to \infty$. Then, we can choose $\widetilde{p} \in \mathN$ such that $m^{i_{\widetilde{p}}} \geq 2^{-(\widetilde{p}+1)}|B_1|$. Now using the sets $\{G^{i_p}\}_{p=1}^{\infty}$, we construct a new family of sets $\{\widetilde{G}^{i_p}\}_{p=1}^{H}$ for some $H \in \mathN$, depending only on $N$ and $s$, in the following manner; we choose $H \in \mathN$ so large that $H \geq \widetilde{p}$ and set $\widetilde{G}^{i_p} \coloneqq G^{i_p}$ for any $p \in \{1,\cdots ,\,H\}$ with $p \neq \widetilde{p}$ and $\widetilde{G}^{i_{\widetilde{p}}} \coloneqq \eta\,G^{i_{\widetilde{p}}}$ where $\eta^{N} \coloneqq m^{-i_{\widetilde{p}}}(m^{i_{\widetilde{p}}} + \mu_{H})$. Then, we have the volume identity that
	\begin{equation}\label{volumeIdentityReorganizedAsymptotic}
		\sum_{p=1}^{H} \left| \widetilde{G}^{i_p} \right| = \sum_{p=1, \,p\neq\widetilde{p}}^{H} \left| G^{i_p} \right| + \eta^N\, |G^{i_{\widetilde{p}}}| = \sum_{p=1,\,p\neq\widetilde{p}}^{H} m^{i_p} + m^{i_{\widetilde{p}}} + \mu_{H} = |B_1|.
	\end{equation}
	Now we compute the energy for $\{\widetilde{G}^{i_p}\}_{p=1}^{H}$ as follows to show that the total energy of each elements of $\{\widetilde{G}^{i_p}\}_{p=1}^{H}$ is more efficient than that of $\{G^{i_p}\}_{p=1}^{\infty}$; from the definition of $\eta \geq 1$ and Lemma \ref{lemmaScalingEnergy}, we have that
	\begin{align}\label{comparisonNewSetsEnergyAsymptotic}
		\sum_{p=1}^{H} P_s(\widetilde{G}^{i_p}) &= \sum_{p=1,\,p\neq\widetilde{p}}^{H} P_s(G^{i_p}) + \eta^{N-s}\,P_s(G^{i_{\widetilde{p}}}) \nonumber\\
		&= \sum_{p=1}^{\infty} P_s(G^{i_p}) + \left(\eta^{N-s} - 1\right)\,P_s(G^{i_{\widetilde{p}}}) - \sum_{p=H+1}^{\infty}P_s(G^{i_p}) \nonumber\\
		&\leq \sum_{[i] \in \capI} P_s(G^{i_p}) + \frac{2^{\widetilde{p}+1}c_{N,s}\,P_s(B_1)}{|B_1|}\,\mu_{H} - \sum_{p=H+1}^{\infty} P_s(G^{i_p})
	\end{align}
	where $c_{N,s}>0$ is some constant depending only on $N$ and $s$. Here, in the last inequality, we have also used \eqref{keyPropertylowSemicontiVolumeEqualParticlesAsymptotic}. From the isoperimetric inequality of $P_s$ and \eqref{comparisonNewSetsEnergyAsymptotic}, we further obtain that
	\begin{align}
		\sum_{p=1}^{H} P_s(\widetilde{G}^{i_p}) &\leq \sum_{[i] \in \capI} P_s(G^i) + \frac{2^{\widetilde{p}+1}c_{N,s}\,P_s(B_1)}{|B_1|}\, \mu_{H} - C\sum_{p=H+1}^{\infty} \left(m^{i_p}\right)^{\frac{N-s}{N}}  \nonumber\\
		&\leq \sum_{[i] \in \capI} P_s(G^i) + \frac{2^{\widetilde{p}+1}c_{N,s}\,P_s(B_1)}{|B_1|}\, \mu_{H} - C \left(\sum_{p=H+1}^{\infty} m^{i_p}\right)^{\frac{N-s}{N}} \nonumber\\
		&= \sum_{[i] \in \capI} P_s(G^i) + \frac{2^{\widetilde{p}+1}c_{N,s}\,P_s(B_1)}{|B_1|}\, \mu_{H} - C \left( \mu_{H} \right)^{\frac{N-s}{N}}. \nonumber
	\end{align}
	Taking the number $H$ so large that $H \geq \widetilde{p}$ and 
	\begin{equation}\nonumber
		\frac{2^{\widetilde{p}+1}c_{N,s}\,P_s(B_1)}{|B_1|}\, \mu_{H} - C \left( \mu_{H} \right)^{\frac{N-s}{N}} \leq 0,
	\end{equation}
	then we finally obtain that 
	\begin{equation}\label{reductionFiniteElementsMinimizerAsymptotic}
		\sum_{p=1}^{H} P_s(\widetilde{G}^{i_p}) \leq \sum_{[i] \in \capI} P_s(G^i) \leq \liminf_{n \to \infty}\widehat{\capE}^{\lambda_n}_{s,g}(F_n) = \liminf_{n \to \infty}\left(P_s(F_n) - P_{g_{\lambda_n}}(F_n)  \right).
	\end{equation}
	This completes the proof of the claim that the set $\capI$ is finite.
	
	Finally, we show that there exists one number $i_0 \in \mathN$ such that $|G^i|=0$ for any $[i] \in \capI$ with $i \neq i_0$. From \eqref{keyPropertylowSemicontiVolumeEqualParticlesAsymptotic}, there exists at least one number $p'\in\mathN$ such that $|G^{i_{p'}}| > 0$. Then we claim that, if $q \neq p'$, then it holds that $|G^{i_q}|=0$. Indeed, from the previous claim, we can restrict ourselves to consider a finite number of sets $\{\widetilde{G}^{i_p}\}_{p=1}^{H}$, which satisfies \eqref{reductionFiniteElementsMinimizerAsymptotic} and $\sum_{p=1}^{H}|\widetilde{G}^{i_p}| = |B_1|$, instead of $\{G^i\}_{[i]\in\capI}$. Moreover, we may assume that $H \geq p'$. Since we have shown that the sets $\{\widetilde{G}^{i_p}\}_{p=1}^{H}$ are identified with the balls of the volume $|G^{i_p}|$ whenever $|G^{i_p}|>0$ for $p \in \{1,\cdots,H\}$, we can choose the points $\{z^{i_p}\}_{p=1,\,p \neq p'}^{H}$ such that each set $\widetilde{G}^{i_p} + R\,z^{i_p}$ is far away from the others for large $R>1$. We can thus compute the energy as follows; by translation invariance, we have that
	\begin{align}
		\sum_{p=1}^{H} P_s(\widetilde{G}^{i_p}) &= \sum_{p=1\,p \neq p',q}^{H} P_s(\widetilde{G}^{i_p}) + P_s(\widetilde{G}^{i_{p'}}) + P_s(\widetilde{G}^{i_q}) \nonumber\\
		&= \sum_{p=1\,p \neq p',q}^{H} P_s(\widetilde{G}^{i_p}) + P_s(\widetilde{G}^{i_{p'}}) + P_s(\widetilde{G}^{i_q} + R\,z^{i_q}) \nonumber\\
		&= \sum_{p=1\,p \neq p',q}^{H} P_s(\widetilde{G}^{i_p}) + P_s(\widetilde{G}^{i_{p'}} \cup (\widetilde{G}^{i_q} + R\,z^{i_q})) \nonumber\\
		&\qquad + 2\int_{\widetilde{G}^{i_{p'}}} \int_{\widetilde{G}^{i_q} + R\,z^{i_q}} \frac{dx\,dy}{|x-y|^{N+s}}  \nonumber
	\end{align}
	for any $q \in \{1,\cdots,\,H\}$ with $q \neq p'$ and sufficiently large $R>1$. By repeating the same argument finite times for the rest of the sets $\{\widetilde{G}^{i_p}\}_{p=1,\,p \neq p',q}^{H}$ with sufficiently large $R>1$, we obtain the inequality that
	\begin{align}\label{keyEstimateReductionOneElement02Asymptotic}
		\sum_{p=1}^{H} P_s(\widetilde{G}^{i_p}) &\geq P_s\left( \widetilde{G}^{i_{p'}} \cup \bigcup_{p=1,\,p \neq p'}^{H}\left( \widetilde{G}^{i_p} + R\,z^{i_p} \right) \right) \nonumber\\
		&\qquad + \sum_{p=1,\,p \neq p'}^{H} \int_{\widetilde{G}^{i_{p'}}} \int_{\widetilde{G}^{i_p} + R\,z^{i_p}} \frac{dx\,dy}{|x-y|^{N+s}}.
	\end{align}
	Since $\widetilde{G}^{i_{p'}} \cup \bigcup_{p=1,\,p \neq p'}^{H}\left( \widetilde{G}^{i_p} + R\,z^{i_p} \right)$ are the union of disjoint sets, we have, from \eqref{volumeIdentityReorganizedAsymptotic}, that 
	\begin{equation}\nonumber
		\left| \widetilde{G}^{i_{p'}} \cup \bigcup_{p=1,\,p \neq p'}^{H}\left( \widetilde{G}^{i_p} + R\,z^{i_p} \right)\right| = \sum_{p=1}^{H} |\widetilde{G}^{i_{p}}| = |B_1|.
	\end{equation}
	Thus, from \eqref{reductionFiniteElementsMinimizerAsymptotic}, \eqref{keyEstimateReductionOneElement02Asymptotic}, and the minimality of $F_n$, we obtain
	\begin{align}
		\sum_{p=1,\,p \neq p'}^{H} \int_{\widetilde{G}^{i_{p'}}} \int_{\widetilde{G}^{i_p} + R\,z^{i_p}} \frac{dx\,dy}{|x-y|^{N+s}} + P_s(B_1) &\leq \sum_{p=1,\,p \neq p'}^{H} \int_{\widetilde{G}^{i_{p'}}} \int_{\widetilde{G}^{i_p} + R\,z^{i_p}} \frac{dx\,dy}{|x-y|^{N+s}} \nonumber\\
		&\qquad + P_s\left( \widetilde{G}^{i_{p'}} \cup \bigcup_{p=1,\,p \neq p'}^{H}\left( \widetilde{G}^{i_p} + R\,z^{i_p} \right) \right) \nonumber\\
		&\leq  P_s(B_1) \nonumber
	\end{align}
	and it follows that
	\begin{equation}\nonumber
		\sum_{p=1,\,p \neq p'}^{H} \int_{\widetilde{G}^{i_{p'}}} \int_{\widetilde{G}^{i_p} + R\,z^{i_p}} \frac{dx\,dy}{|x-y|^{N+s}} \leq 0
	\end{equation}
	for large $R>1$. Since each term of the sum is non-negative and $|\widetilde{G}^{i_{p'}}|>0$, we conclude that $|\widetilde{G}^{i_p}| = 0$ for all $p \neq p'$.
	
	Therefore, taking into account all of the above arguments, we may conclude that there exist a set $G' \subset \mathR^N$ and points $\{z'_n\}_{n\in\mathN} \subset \mathR^N$ such that, up to extracting a subsequence, we have 
	\begin{equation}\nonumber
		\chi_{F_n - z'_n} \xrightarrow[n \to \infty]{} \chi_{G'} \quad \text{in $L^1_{loc}$}, \quad |G'|=|B_1|. 
	\end{equation}
	From Brezis-Lieb lemma in \cite{BrLi} and the fact that $|G'| = |B_1|$, we obtain that the convergence
	\begin{equation}\nonumber
		\chi_{F_n - z'_n} \xrightarrow[n \to \infty]{} \chi_{G'} \quad \text{in $L^1_{loc}$}
	\end{equation}
	holds in $L^1$ sense. As a consequence, by applying the $\Gamma$-convergence result for the energy $\widehat{\capE}_{s,g}$ as shown in Proposition \ref{propositionGammaConvergenceNonlocalEnergy}, we obtain that $G'$ is a minimizer of the nonlocal perimeter $P_s$, up to translations, because each element of $\{F_n\}_{n}$ is a minimizer of $\widehat{\capE}_{s,g}$. Thus, from the isoperimetric inequality, we conclude that $G'$ coincides with the unit ball up to negligible sets. Finally, we may repeat the above argument for any subsequence of $\{F_n\}_{n\in\mathN}$ and therefore, we conclude that Theorem \ref{theoremAsympMiniLargeVolume} is valid.
\end{proof}
\begin{remark}
	We mention that we are not able to obtain a better convergence of minimizers for Problem \eqref{minimizationScalingModifiedProbelm} than $L^1$-convergence. In general, once we have the $L^1$-convergence and uniform density estimate for minimizers, we can obtain the Hausdorff (possibly $C^1$) convergence of the boundaries of the minimizers to the boundary of the unit ball (see, for instance, \cite{GoNo}). However, we do not know whether the density estimates of minimizers for Problem \eqref{minimizationScalingModifiedProbelm} are valid, while we have the uniform density estimates of minimizers for Problem \eqref{minimizationGeneralizedFunctional}, as shown in Lemma \ref{lemmaUniformDensity}. We might hope that, under some stronger assumptions on $g$ (for instance, some control of the gradient of $g$), the uniform density estimates of minimizers for Problem \eqref{minimizationScalingModifiedProbelm} could be valid.
\end{remark}

\end{document}